\newtheorem{theorem}{Theorem}%[section] (If  you want theorem numbered
\newtheorem{lemma}[theorem]{Lemma}%   with   section   number.    Same
\newtheorem{corollary}{Corollary}%    goes     for    lemmas,    etc.)
\newtheorem{proposition}[theorem]{Proposition}
\theoremstyle{definition}                 
            \newtheorem{defn}{Definition}
\newtheorem{remark}{Remark} \newtheorem*{notation}{Notation}
\newcommand{\field}[1]{\mathbb{#1}}          \newcommand{\Q}{\field{Q}}
\newcommand{\R}{\field{R}}                   \newcommand{\Z}{\field{Z}}
\newcommand{\C}{\field{C}}
 \newcommand{\ra}{\rightarrow}
\begin{document}

\title[Image of The Burau Representation at $d$-th Roots of Unity]
{Image of The Burau Representation at $d$-th Roots of Unity}

\author{T.N.Venkataramana}

{\address{ T.N.Venkataramana, School of Mathematics, TIFR, Homi Bhabha
Road, Colaba, Mumbai 400005, India}

\email{venky@math.tifr.res.in}

\subjclass{primary:  22E40.  Secondary:  20F36  \\  T.N.Venkataramana,
School of  Mathematics, Tata  Institute of Fundamental  Research, Homi
Bhabha Road, Colaba, Mumbai 40005, INDIA}

\date{}

\begin{abstract} We show  that the image of the  braid group under the
monodromy action on the homology of a cyclic covering of degree $d$ of
the  projective line  is an  arithmetic group  provided the  number of
branch points is  sufficiently large compared to the  degree $d$. This
is  deduced by proving  the arithmeticity  of the  image of  the braid
group  on $n+1$ letters  under the  Burau representation  evaluated at
$d$-th roots of unity when $n\geq 2d$.
\end{abstract}

\maketitle{}

\tableofcontents

\newpage

\section{Introduction}\label{introsection}

This  paper is  concerned with  the question  of whether  some natural
monodromy   groups  are  arithmetic.    These  questions   were  first
considered  by   Griffiths  and  Schmid   \cite  {Gr-Sch}.   Following
\cite{Gr-Sch}  pp. 123-124,  we say  that a  subgroup  $\Gamma \subset
GL_N(\Z)$ is an  {\it arithmetic group} if $\Gamma  $ has finite index
in the integral points of its  Zariski closure in $GL_N$. If $\Gamma $
is an arithmetic  group then by a result  of Borel and Harish-Chandra,
$\Gamma $  is a  lattice in the  group of  real points of  the Zariski
closure.  In \cite{Gr-Sch}, the groups $\Gamma $ arise
as follows.  Let  $\pi : X \ra S$ be an  algebraic family of algebraic
manifolds, which  is a fibration.   Write $V_{s_0}=\pi ^{-1}(\{s_0\})$
for a typical  fibre and $\Gamma \subset Aut  (H^m(V_{s_0}))$ the {\it
monodromy group}  i.e. the image of  the action of $\pi  _1(S)$ on the
cohomology   group   (with  integral   coefficients)   of  the   fibre
$V_{s_0}$.  Griffiths  and Schmid  then  raise  the  question: is  the
monodromy group an arithmetic group?  \\

The foregoing question has a  negative answer in general, as was shown
by  Deligne  and  Mostow   (\cite{Del-Mos})  (there  are  examples  of
monodromy groups which are not even finitely presented in \cite{Nor} -
arithmetic groups are finitely  presented by a result of Raghunathan).
Let us recall  the basic set-up of \cite{Del-Mos}.  Let  $d \geq 2$ be
an integer. For the family $S$, take the space of complex $n+1$-tuples
$a=  (a_1, \cdots,  a_{n+1})$  whose entries  are  all distinct.   Fix
integers $k_1, \cdots, k_{n+1}$ with $1\leq k_i \leq d-1$. Given $a\in
S$, consider the set $X_{a,k}$ of solutions ($x,y$) of the equation
\[y^d=  (x-a_1)^{k_1}(x-a_2)^{k_2}\cdots  (x-a_{n+1})^{k_{n+1}}.\] The
space $X_{a,k}$ has  a natural structure of a  compact Riemann surface
$X_{a,k}^*$ with finitely many punctures. We then get a map $\pi: X\ra
S$ described on the ``affine part'' by  $(x,y,a) \mapsto a$, where
$(x,y)\in X_{a,k}$  and $a\in S$  and where $X$  is the family  of the
compact  Riemann  surfaces  $X_{a,k}^*$;  we  can  then  consider  the
monodromy action on $H^1(X_{a,k}^*,\Z)$ for a typical fibre $X_{a,k}^*$. \\

The fundamental  group of the space $S$  is well known to  be the pure
braid group $P_{n+1}$ on $n+1$  strands; thus the fibration $X \ra S$
yields a monodromy representation
\[\rho _M ^* (k,d): P_{n+1}\ra GL(H^1(X_{a,k} ^* ,\Z)),\] of $P_{n+1}$
on the integral cohomology of the fibre $X_{a,k}^*$.  If $N$ is the rank
of the abelian group $H_1(X_{a,k}^*,\Z)$,  then the image $\Gamma $ of
$P_{n+1}$  is  a subgroup  of  $GL_N(\Z)$.  Form  the Zariski  closure
${\mathcal  G}$ of $\Gamma  $ in  $GL_N$; this  is a  linear algebraic
group defined over $\Q$. \\  

We  now  give  only  a  qualitative  description  of  the  results  of
\cite{Del-Mos}. Deligne  and Mostow prove that for  special choices of
the  integers $d,n$ and  $k_1, \cdots,  k_{n+1}$, the  monodromy group
$\Gamma $  does not  have finite index  in ${\mathcal  G}(\Z)$.  Hence
this  gives a  negative answer  to the  question  of Griffiths-Schmidt
mentioned before.  It  can be shown that the  group ${\mathcal G}(\R)$
of  real  points  of  the  Zariski  closure is  a  product  $\prod  _j
U(p_j,q_j)$ of  unitary groups $U(p_j,q_j)$ (the  unitary structure on
$H^1(C, \C)$  of the curve  $C=X_{a,k}^*$ comes from  the intersection
form $h(\alpha,\beta )= \int _C \alpha \wedge {\overline \beta}$).  As
we  mentioned before,  ${\mathcal G}(\Z)$  is a  discrete  subgroup of
${\mathcal G}(\R)$ and hence  so is $\Gamma$.  However, the projection
of  $\Gamma  $  to  one  of  these factors  $U(p_j,q_j)$  may  not  be
discrete. In  \cite{Del-Mos} (see also \cite{Mos}) it  is shown -using
their  INT and  $\Sigma$-INT criteria-  that  for a  finite number  of
special choices of  $k_i, d,n$, one of these  $U(p_j,q_j)$ is $U(n,1)$
and that the {\it projection} of the monodromy group $\Gamma $ to this
factor  $U(n,1)$ continues  to be  discrete.  Once  the  projection is
discrete, it follows (see \cite{Mc}, Theorem (10.3), for example) that
the image of the projection is  a lattice in $U(n,1)$; if there is one
more non-compact factor isomorphic  to $U(p,q)$ in ${\mathcal G}(\R)$,
then it  follows (see \cite{Mc}, \S 10,  p. 48) that the  image of the
projection  of the  $\Gamma $  in  $U(n,1)$ is  a non-arithmetic  {\it
lattice}.  In particular, the monodromy  group $\Gamma $ does not have
finite  index in  ${\mathcal G}(\Z)$  (if the  monodromy were  to have
finite  index,  then  the  projection  to  $U(n,1)$  would  either  be
non-discrete  or an  arithmetic  lattice; this  is  discussed in  more
detail in \cite{Mc}, Corollary 10.4). \\

In  view  of  the  Margulis arithmeticity  theorem  (that  irreducible
lattices in  linear semi-simple Lie groups  of real rank  at least two
are arithmetic), the above strategy to produce non-arithmetic lattices
cannot work if $U(n,1)$ is replaced  by $U(p,q)$ with $p,q \geq 2$; if
we are to have  $U(n,1)$ as a factor, it may be  shown that the number
$n+1$ of  branch points  $a_1, \cdots, a_{n+1}$  must not  exceed $2d$
(see  the proof of  Theorem \ref{mainth}  where we  prove this  in the
special case when all the $k_i$  are $1$).  However, if we take $n\geq
2d$,  it   is  still  of  interest -in  view  of   the  question  of
Griffiths and Schmid- to  know  whether the  monodromy $\Gamma  \subset
{\mathcal  G}(\Z)\subset  GL_N(\Z)$  has  finite index  in  ${\mathcal
G}(\Z)$.  In this paper, we prove that if $n\geq 2d$ and all the $k_i$
are $1$, then $\Gamma $  does have finite index in ${\mathcal G}(\Z)$
(see Theorem \ref{cyclicmonodromy} for the statement of a more general
case). \\

Consider  the  compactification  $X_a^*$  of the  affine  curve  $X_a$
defined by
\[y^d= (x-a_1)(x-a_2)\cdots (x-a_{n+1}), \]  with $y\neq 0$ and $x\neq
a_1, \cdots,  a_{n+1}$.  As before,  there is the monodromy  action of
the pure braid group $P_{n+1}$ on the cohomology of the fibre $X_a^*$.
Since the equation of the curve is patently invariant under the action
of the permutations of the $a_i$'s, the action extends to an action of
the full braid group $B_{n+1}$ on $H_1(X_a^*,\Z)$.

\begin{theorem}  \label{fullbraidmonodromy}  Suppose  $d\geq  3$.   If
$n\geq 2d$, then  the image $\Gamma $ of  the monodromy representation
$\rho  (d): B_{n+1}\ra  GL(H^1(X_a^*,\Z))=GL_N(\Z)$  is  an  arithmetic
group. \\

Moreover, the monodromy  is a product of  irreducible lattices, each
of  which is  a non-co-compact  arithmetic group  and has  $\Q$-rank at
least two.
\end{theorem}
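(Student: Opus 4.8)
The plan is to exploit the deck group of the cyclic cover in order to reduce the statement to the arithmeticity of the Burau representation at roots of unity, which is the main technical result of the paper. The covering $X_a^*\to \mathbb{P}^1$ (projection onto $x$) is Galois with group $\mu_d=\langle \zeta\rangle$ acting by $y\mapsto \zeta y$, where $\zeta$ is a primitive $d$-th root of unity. Since the braid monodromy commutes with the deck transformations, the complexified cohomology splits $\mu_d$-equivariantly as $H^1(X_a^*,\C)=\bigoplus_{j} V_j$, where $V_j$ is the subspace on which $\zeta$ acts by $\zeta^j$, and $B_{n+1}$ preserves each $V_j$. The intersection form pairs $V_j$ with $V_{d-j}$ under complex conjugation, so each $V_j$ carries a monodromy-invariant Hermitian form; this is exactly the mechanism producing the factors $U(p_j,q_j)$ of $\mathcal{G}(\R)$ in the Deligne--Mostow picture recalled in the introduction. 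First I would set up this decomposition precisely, computing the dimension of each $V_j$ (the only subtlety being the eigenvalue $\zeta^{-j(n+1)}$ of the monodromy at $\infty$, which governs whether the dimension is $n$ or $n-1$).

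The crucial step is to identify the action of $B_{n+1}$ on each nontrivial eigenspace $V_j$ with the reduced Burau representation of $B_{n+1}$ specialized at $t=\zeta^j$. Concretely, $V_j$ is the cohomology of the rank-one local system on $\mathbb{P}^1\smallsetminus\{a_1,\dots,a_{n+1},\infty\}$ whose monodromy around each finite puncture $a_i$ is $\zeta^j$, and the braid action on such a local system is the standard dictionary giving Burau evaluated at $\zeta^j$. Establishing this identification over the ring $\Z[\zeta]$ (not merely over $\C$) is what lets me transfer an arithmeticity statement about Burau back to an integral statement about $\Gamma\subset GL_N(\Z)$.

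With the identification in hand, I would group the eigenspaces into Galois orbits: the characters $\chi_j$ of $\mu_d$ fall into orbits indexed by the divisors $e>1$ of $d$, where $\chi_j$ lies in the orbit attached to $e$ precisely when $\zeta^j$ is a primitive $e$-th root of unity. This exhibits the $\Q$-Zariski closure $\mathcal{G}$ as a product of $\Q$-simple factors $\mathcal{G}_e$, each of the form $\mathrm{Res}_{F_e/\Q}$ of a unitary group of a Hermitian form over $\Q(\zeta_e)$ relative to the totally real subfield $F_e=\Q(\zeta_e+\zeta_e^{-1})$. Since $n\geq 2d\geq 2e$ for every such $e$, the Burau arithmeticity theorem applies to each factor, giving that the image of $B_{n+1}$ in $\mathcal{G}_e(\Z)$ has finite index; collecting the factors yields finite index of $\Gamma$ in $\mathcal{G}(\Z)$, i.e. arithmeticity. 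Irreducibility of each factor lattice is then automatic from $\Q$-simplicity, as the Galois group permutes the archimedean factors $U(p_j,q_j)$ transitively.

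For the final assertions I would show that each $\mathcal{G}_e$ is non-cocompact with $\Q$-rank at least two, which amounts to proving that the invariant Hermitian form over $\Q(\zeta_e)$ is isotropic with Witt index $\geq 2$. This is precisely where the hypothesis $n\geq 2d$ is used decisively: the Witt index grows with the number $n+1$ of branch points while the obstruction to isotropy is controlled by $d$, so $n\geq 2d$ forces at least two hyperbolic planes to split off (for $n<2d$ one can only guarantee $U(n,1)$-type factors, consistent with the remark in the introduction). I expect this signature/Witt-index computation — making the $\Q$-rank bound explicit and uniform across all divisors $e$ of $d$, and verifying it is compatible with the integral structure coming from $\Z[\zeta]$ — to be the main obstacle, together with the bookkeeping needed to upgrade the eigenspace identification over $\C$ into a genuine statement about the $\Q$-form $\mathcal{G}$.
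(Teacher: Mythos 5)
Your overall strategy---split off the deck-group eigenspaces, identify each with Burau at a root of unity, and quote Theorem \ref{mainth} factor by factor---is the same reduction the paper carries out, but there is one step you treat as automatic that is in fact a genuine gap: the assertion that ``collecting the factors yields finite index of $\Gamma$ in ${\mathcal G}(\Z)$.'' Knowing that the projection of $\Gamma$ to each factor ${\mathcal G}_e(\Z)$ has finite index does \emph{not} imply that $\Gamma$ has finite index in the product $\prod_e {\mathcal G}_e(\Z)$: a diagonally embedded copy of a lattice inside a product of two isomorphic factors has finite-index projections but infinite index in the product. Ruling this out is exactly the content of the paper's Lemma \ref{products}, whose proof uses the Margulis normal subgroup theorem together with superrigidity, and which in addition requires checking that for distinct divisors $e\neq f$ of $d$ the pairs (field, unitary group) are pairwise non-isomorphic---i.e.\ either $K_e\not\simeq K_f$, or the two unitary groups are non-isomorphic over the common field. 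Your proposal never addresses this dichotomy, and without it the argument proves only that $\Gamma$ maps onto finite-index subgroups of the individual factors, not arithmeticity of $\Gamma$ itself.

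Two smaller points of comparison. First, the paper avoids your ``main obstacle'' (identifying the eigenspaces of $H^1(X_a^*)$ integrally) by a cheaper route: it proves arithmeticity for the \emph{open} curve, where $H_1(X_a)\simeq K_{n+1}(d)^{ab}$ is identified by Artin's theorem with an extension of the Burau module over $\Z[q,q^{-1}]/(q^d-1)$ by a trivial module (Theorem \ref{openmonodromy}), and then pushes arithmeticity forward along the surjection $H_1(X_a)\ra H_1(X_a^*)$ using the fact that images of arithmetic groups under $\Q$-morphisms of algebraic groups are arithmetic (Proposition \ref{arithmeticimageproposition}), finishing with Poincar\'e duality; the precise eigenspace description is only needed later, for Proposition \ref{arithmeticspecify}. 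Second, on the compact curve the factor attached to a divisor $e\mid n+1$ is not $\rho_n(e)$ but the quotient $\overline{\rho_n(e)}$ by the invariant line (killed by the loops over $\infty$), so your appeal to ``the Burau arithmeticity theorem'' at that factor must pass through the semidirect-product structure $U(h)=U(\overline{h})\ltimes W$ used in the proof of Theorem \ref{mainth}; this is fillable but needs saying. Your Witt-index argument for $\Q$-rank $\geq 2$ and non-cocompactness is correct and is the same two-orthogonal-isotropic-vectors computation the paper makes inside the proof of Theorem \ref{mainth}.
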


\begin{remark}  The group $G=\Z/d\Z$,  viewed as  the group  of $d$-th
roots of unity, operates on each of the curves $X_a^*$ for varying $a$
and hence acts on the first cohomology $H^1(X_a^*,\Z)$; therefore, one
may view  the first cohomology group  of $X_a^*$ as a  module over the
group algebra $\Z[G]=\Z[q]/(q^d-1)$.  The  action of $G$ commutes with
the monodromy action of the  braid group, and therefore, the monodromy
group lies in the space  of endomorphisms of $H^1(X_a^*,\Z)$ which are
$\Z[G]$  module  maps. Moreover,  the  monodromy  group preserves  the
intersection  form $(\alpha,  \beta )\mapsto  \alpha \wedge  \beta$ on
$H^1(X_a^*,\Z)$ which extends as a Hermitian form on $V=H^1(X_a^*,\C)$
given by  $(\alpha,\beta )\mapsto  \alpha \wedge {\overline  \beta }$.
Therefore, the monodromy  lies in the unitary group  of this Hermitian
form,  and preserves  the eigenspaces  $V_{\eta}$ (and  hence  the sum
$W_{\eta}=  V_{\eta }\oplus  V_{\overline  \eta}$) of  the group  $G$.
Consequently,  $W_{\eta}$  is  a  Hermitian space  and  the  monodromy
representation  restricted to $W_{\eta}$  has its  image in  a unitary
group of the form $U(r,s)$  with $r,s$ depending on $\eta$.  Since the
cohomology  group  $H^1(X_a^*,\C)$  is  a  direct sum  of  the  spaces
$W_{\eta}$, it follows that the image of the monodromy group lies in a
product of the $U(r,s)$. \\

The image of $\Gamma $ in the individual $U(r,s)$ may not be discrete, but 
$\Gamma $ as a subgroup of the product will be discrete, 
since it preserves the integral lattice $H^1(X_a^*,\Z)$.  
\end{remark}

\begin{remark} The arithmetic group  is specified (up to finite index)
in      Proposition       \ref{arithmeticspecify}      in      section
\ref{monodromyandburau}. It  is a little complicated  to describe when
$n+1$ and $d$ are not coprime, but Proposition \ref{arithmeticspecify}
shows that  the group  of real  points of the  Zariski closure  of the
monodromy is a product of unitary groups $U(r,s)$.
\end{remark}

\begin{remark} If $n+1\leq d$ then the group of integral points of the
Zariski  closure  of  the   monodromy  is  a  product  of  irreducible
arithmetic lattices (the integral points of the Zariski closure are as
described in Proposition  \ref{arithmeticspecify}), some of which form
{\it co-compact} lattices of their real Zariski closures.  The Zariski
closure can  in fact  be shown to  be a  product of unitary  groups of
suitable Hermitian forms over  certain totally real number fields, and
one  of  these  Hermitian  forms  is  definite  at  one  of  the  real
completions of the  number field. We give a  more detailed analysis of
the Hermitian form later. \\

A    result    of     A'Campo    \cite{A'C}    says    that    Theorem
\ref{fullbraidmonodromy}  holds  even when  $d=2$.   In  the proof  of
Theorem  \ref{fullbraidmonodromy},  we use  the  fact  that a  certain
central element in $B_{n+1}$ acts by a {\it non-trivial} scalar on the
Burau  representation  if   $n$  is  of  the  form   $kd-2$  in  Lemma
\ref{scalar}; it is here that we need that $d\geq 3$. The proof can be
slightly  modified  to  extend  to  the case  $d=2$  (see  the  remark
following Lemma \ref{scalar}), but we will not do so here.  \\

Theorem   \ref{fullbraidmonodromy}  answers   a  question   raised  in
\cite{Mc} (see  question 11.1 in \cite{Mc}) in  the affirmative when
$n\geq 2d$. When $n\leq d-2$, the monodromy is, in some cases, {\it not
arithmetic} as  is shown by  the examples of Deligne-Mostow  (cf. the
example of $d=18$ and $n=3$ of Corollary (11.8) of \cite{Mc}). \\

In a different direction (\cite{Al-Car-Tol}), the arithmeticity of the
image of  the braid group  of type $E_6$  into $U(4,1)$ is  proved; in
\cite{Al-Hal} representations  of the braid  group on the  homology of
{\it non-cyclic}  coverings are considered.  Arithmeticity results for
cyclic coverings of compact surfaces are proved in \cite{Looi2}.
\end{remark}

The monodromy  representation of the braid  group $B_{n+1}$ considered
in Theorem \ref{fullbraidmonodromy} is  closely related to the reduced
Burau   representation   of   the   group   $B_{n+1}$   (see   section
\ref{monodromyandburau} for details). Theorem \ref{fullbraidmonodromy}
is  deduced  from  the  arithmeticity  of  the  images  of  the  Burau
representation of the  braid group $B_{n+1}$ at $d$-th  roots of unity
(analogously,    a    more     general    result,    namely    Theorem
\ref{cyclicmonodromy},       is        deduced - in       section
\ref{cyclicmonodromysection}- from the  arithmeticity of the images of
the Gassner representation of the pure braid group $P_{n+1}$ evaluated
at  $d$-th  roots of  unity).  In the  rest  of  the introduction,  we
concentrate  only  on  the  Burau   case;  the  case  of  the  Gassner
representation is much  more involved and we postpone  dealing with it
to a future occasion.

\subsection{The braid group and the Burau representation}

\subsubsection{Definition} The  braid group $B_{n+1}$  on $(n+1)$-strands
is the free group on  $n$ generators $s_1,s_2, \cdots, s_n$ modulo the
relations
\[s_js_k=s_ks_j~(\mid   j-k\mid  \geq  2),   \quad  {\rm   and}  \quad
s_js_ks_j=s_ks_js_k ~(\mid j-k\mid =1).\]

\subsubsection{The reduced Burau representations $\rho _n$ }

Let $R=\Z [q,q^{-1}]$  be the Laurent polynomial ring  in the variable
$q$  with integral  coefficients.  Let  $M=R^n$ be  the  standard free
$R$-module  of rank  $n$  with standard  generators $e_1,e_2,  \cdots,
e_n$.   For each  $j$ define  the  operator $T_j\in  End_R(M)$ by  the
formulae
\[T_j(e_j)=-qe_j,~~T_j(e_{j-1})=e_{j-1}+qe_j,~~
T_j(e_{j+1})=e_{j+1}+e_j,\] and
\[T_j(e_k)=e_k ~~(\mid k-j\mid \geq 2).\] In the above formulae, we do
not  attach any  meaning to  $T_1(e_0)$.  Similarly  $T_n(e_{n+1})$ is
left  undefined. The  map  $s_j\mapsto T_j$  defines a  representation
$\rho _n:  B_{n+1}\ra GL_n(R)$.   Denote by $\Gamma  _n$ the  image of
$\rho _n$.  The representation $\rho  _n$ is the reduced  {\bf Burau
representation in degree $n$}  (\cite{Bir}).

\subsubsection{A Hermitian form on $R^n$}

The  ring $R=\Z[q,q^{-1}]$ has  an involution  $f\mapsto \overline{f}$
given by $\overline{f}(q)=f(q^{-1})$. The sub-ring $S$ of invariants in
$R$ under this involution is clearly $\Z[q+q^{-1}]$. \\

There is a unique map $h:R^n\times  R^n\ra R$, which is a bilinear map
of $S$-modules, so that for all $v,w\in R^n$ and all $\lambda, \mu \in
R$ we have
\[\overline{h(v,w)}=h(w,v),~~ h(\lambda v,\mu w)=\lambda \overline{\mu
} h(v,w),\] and
\[h(e_j,e_k)=0 \quad (\mid j-k \mid \geq 2),~~ \]
\[h(e_j,e_{j+1})=-(q+1),  \quad  h(e_j,e_j)=  \frac{(q+1)^2}{q}.\]  We
denote  this form by  $h=h_n$ (  and when  $n$ is  fixed, we  drop the
subscript, and  write $h$).  Then $\Gamma _n$  preserves the Hermitian
form  $h$ on  $R^n$. We  can then  talk of  the unitary  group  of the
Hermitian form $h$. \\

For a  detailed description of a  unitary group as  an algebraic group
defined over a field $K$,  see the beginning of Section \ref{unitary}.
The unitary  group $U(h)$  of the Hermitian  form $h$ is  an algebraic
group scheme defined over $S$ and
\[U(h)(S)=\{g\in GL_n(R):  h(gv,gw)=h(v,w)~~\forall v,w\in R^n  \}. \]
More generally, for any commutative  $S$ algebra $A$, $U(h)(A)$ is the
group
\[U(h)(A)= \{ g\in GL_n(R\otimes  _S A); h(gv,gw)=h(v,w) ~~\forall v,w
\in R^n \otimes _S A\}.\]

\begin{remark} This Hermitian form $h$ is essentially ( up to a scalar
multiple and  equivalence of Hermitian forms) the one  constructed by
Squier \cite{Sq}  where he  uses  a  form  with coefficients  in  a
quadratic extension $R'=\Z[\sqrt{q}, {\sqrt  q}^{-1}]$ of $R$. We have
written it in this form since  we need an algebraic group defined over
$S$ and not over $R$. \\

It is  customary to use  the notation $U(R,h)$  in place of  $U(h)$ to
denote a unitary group (see  \cite{Pl-Ra}, section 2.3.3), in order to
specify  the  quadratic extension  $R/S$  with  respect  to which  the
Hermitian form $h$ is defined. Since our quadratic extension is fixed,
we have used $U(h)$ to avoid complicating the notation.
\end{remark}

\subsection{The Burau  representations $\rho_n(d)$ at  $d$-th roots of
unity}

If  $\mathfrak{a}\subset R$ is  an ideal  stable under  the involution
$f\mapsto \overline{f}$, and  $A=R/\mathfrak{a}$ is the quotient ring,
then on  the $A$-module  $A^n$ we get  a corresponding  Hermitian form
$h_A$,  and  a corresponding  representation  $\rho _n(A):  B_{n+1}\ra
GL_n(A)$,  which maps  $B_{n+1}$  into $U(h_A)(B)$  where  $B$ is  the
quotient ring $S/\mathfrak {a}\cap S$. \\

We can take ${\mathfrak a}$ to be the principal ideal in $R$ generated
by  $\Phi _d(q)$  where  $\Phi _d(q)$  denotes  the $d$-th  cyclotomic
polynomial  in $q$.   Then the  quotient $R_d=R/{\mathfrak  a}$  is an
integral  domain  (the  ring  of  integers in  the  $d$-th  cyclotomic
extension $E_d$  of $\Q$).  The reduction modulo  the ideal $\mathfrak
a$  of the  representation  $\rho _n$  yields  a representation  $\rho
_n(d)$  of the  braid group  $B_{n+1}$.  This  is referred  to  as the
representation obtained by evaluating  the representation $\rho _n$ at
({\it all the}) primitive $d$-th roots of unity. \\

Note  that there  is no  preferred embedding  of the  cyclotomic field
field $E_d=\Q[q]/(\Phi _d(q))$ into the field $\C$ of complex numbers,
since there is no preferred choice of a primitive $d$-th root of unity
in $\C$.  We  will therefore not consider $E_d$ as  a sub-field of $\C$
but  as being  naturally embedded  in the  product $\prod  _{\mu }\C$,
where the  product is over  all the primitive  $d$-th roots $\mu  $ of
unity (the map into the product  is obtained by evaluating $q$ at {\it
all} the primitive $d$-th roots of unity). \\

Denote  by  $\Gamma  _n(d)$  the  image of  the  representation  $\rho
_n(d)$. The image goes into the group $U(h)(O_K)$ where $K=K_d$ is the
(totally  real)  sub-field of  $E_d$  invariant  under the  involution
$f\mapsto \overline{f}$: $K=\Q(2cos(\frac{2\pi}{d}))$ and $O_K$ is the
ring of  integers in  $K$.  We will  consider arithmetic  subgroups of
(i.e. subgroups which have finite  index in) $U(h)(O_K)$ (we refer to
subsection \ref{arithmeticgroups}  for a  discussion on why  these are
arithmetic groups in the sense of \cite{Gr-Sch}).  \\

Consider the  group $\Gamma  _n(d)\subset U(h)(O_K)$. The  ambient Lie
group  in which $U(h)(O_K)$  is naturally  a lattice  (see the  end of
subsection \ref{arithmeticgroups})  is the product  group $G_{\infty}=
\prod _{v \mid  \infty} U(h)(K_v)$ where the product  runs through all
the archimedean completions $K_v$ of  $K$.  Since $K$ is totally real,
$K_v$ is isomorphic to $\R$ for  each $v$.  The form $h$, however, may
be  different for  different  real  embeddings of  $K$.   when $h$  is
non-degenerate, there exist non-negative integers $r_v$ and $s_v$ with
$r_v+s_v=n$ such  that the unitary group $U(h)(K_v)$  is isomorphic to
$U(r_v,s_v)$ as an algebraic group  over $\R$ (when $h$ is degenerate,
the unitary group $U(h)(K_v)$ has a unipotent radical and the quotient
by the unipotent radical is of the form $U(r_v,s_v)$ with $r_v+s_v\leq
n-1$). In  the course of the  proof of arithmeticity  of monodromy, we
never need  to use  the ambient group  $G_{\infty}$; we  work directly
with  the arithmetic group  $G(O_K)$ and  the monodromy  group $\Gamma
\subset G(O_K)$.   For these reasons,  we do not specify  the integers
$r_v$, $s_v$ and the ambient Lie group $G_{\infty}$.

\subsubsection{Statement of results}

The main result of the paper is  

\begin{theorem}\label{mainth} If  $d\geq 3$  and $n\geq 2d$,  then the
image $\Gamma  _n(d)$ of the  Braid group $B_{n+1}$ under  the reduced
Burau representation  $\rho _n$ evaluated  at {\rm all  the} primitive
$d$-th roots of  unity {\rm [} namely the image
of  representation   $\rho  _n(d):  B_{n+1}\ra  GL_n(\Z[q,q^{-1}]/(\Phi
_d(q)))$ {\rm ]} is an arithmetic group. \\

More  precisely,  if  $h$  is   the  Hermitian form  on  $A^n$  which
$\Gamma_n(d)$ preserves,  then $\Gamma _n(d)$ is a  subgroup of finite
index in $U(h)(O_K)$,  where $K=\Q(cos(\frac{2\pi}{d}))$ is the totally
real sub-field of the $d$-th cyclotomic extension of $\Q$. \\

These  arithmetic  groups  are  of  $\Q$-rank at  least  two,  and  in
particular, are not co-compact lattices.
\end{theorem}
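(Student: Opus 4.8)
The plan is to prove that the image $\Gamma_n(d)$ has finite index in $U(h)(O_K)$ by exhibiting enough explicit elements of $\Gamma_n(d)$ to force arithmeticity. The standard mechanism here is to locate unipotent elements in the monodromy group and to apply a criterion -- in the spirit of the work of Tits, Vaserstein, and Raghunathan, or the more recent arithmeticity criteria of Venkataramana and of Bass--Milnor--Serre type -- which says that a Zariski-dense subgroup of a higher-rank group generated by (enough) unipotents, or containing a full ``unipotent'' subgroup relative to a parabolic, must be of finite index in the integral points. Concretely, the Burau generators $T_j$ act as reflections (pseudo-reflections) fixing a hyperplane, so each $T_j$ is a complex reflection of the Hermitian lattice; the products $T_jT_{j+1}^{-1}$ and conjugates thereof will produce unipotent elements once we pass to the quotient $R_d = R/(\Phi_d(q))$, because at a root of unity the eigenvalue $-q$ becomes a genuine root of unity and the transvection structure degenerates appropriately.

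\smallskip

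The key steps, in order, are as follows. First I would establish Zariski density of $\Gamma_n(d)$ in the algebraic group $U(h)$ over $K$; this is where the hypothesis $n \geq 2d$ and the non-triviality of the central scalar (Lemma \ref{scalar}, needing $d \geq 3$) enter, ensuring the relevant $U(r_v,s_v)$ factors are genuinely semisimple and that the representation does not collapse onto a smaller group. Second, I would produce an explicit unipotent element $u \in \Gamma_n(d)$ -- the natural candidate is a suitable commutator or a power of a product of adjacent generators $T_j$, so that at the root of unity $u$ becomes a nontrivial transvection $x \mapsto x + \phi(x)\,v$ along an isotropic (or anisotropic) vector $v$. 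Third, using the braid relations and conjugation by the other generators, I would generate a large supply of such transvections, enough to span a root subgroup of $U(h)$, i.e. the unipotent radical of a parabolic. Fourth, I would invoke the higher-rank arithmeticity criterion: a discrete Zariski-dense subgroup of $U(h)(O_K)$ containing the $O_K$-points of a single root group of a $K$-rank $\geq 2$ group is automatically of finite index in $U(h)(O_K)$. The $\Q$-rank-$\geq 2$ assertion follows by a separate dimension count on the isotropic subspaces of $h$, again forced by $n \geq 2d$.

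\smallskip

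The main obstacle I anticipate is the third step: showing that the transvections coming from the braid generators and their conjugates actually fill out an \emph{entire} root subgroup over $O_K$, rather than merely a finite-index or even infinite-index subgroup of it. This is a delicate commutator computation inside the braid group combined with the structure of the Hermitian lattice over $O_K$, and controlling it requires knowing precisely which vectors $v$ arise and verifying that their $O_K$-span is a full coordinate direction. A secondary difficulty is handling the non-coprime case ($\gcd(n+1,d) > 1$), where $h$ may be degenerate at some archimedean places and the unitary group acquires a unipotent radical; there one must work modulo this radical and reassemble the arithmeticity statement factor by factor, which is precisely why the precise description is deferred to Proposition \ref{arithmeticspecify}. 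Once the root group is in hand, density plus the higher-rank criterion closes the argument, and the $\Q$-rank bound guarantees non-cocompactness via the existence of isotropic vectors for $h$ over $K$.
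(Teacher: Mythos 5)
Your skeleton---find unipotent elements in $\Gamma_n(d)$, conjugate them until they fill out (a finite-index subgroup of) the integral points of the unipotent radical of a parabolic, then invoke a higher-rank arithmeticity criterion, with the $\Q$-rank bound coming from orthogonal isotropic vectors forced by $n\geq 2d$---is indeed the paper's strategy, and you correctly flag the hardest point, namely filling out a \emph{full} finite-index subgroup of $U(O_K)$ rather than a thin subgroup (the paper settles this with Lemma \ref{fullgroup}: the Levi acts on $X^{*}=Hom(X,Av)$ with no invariant subgroups of infinite index). But two essential ideas are missing, and one of your proposed mechanisms fails. First, the unipotent element itself. A power of a single generator can never be a nontrivial unipotent: each $T_j$ is a semisimple pseudo-reflection with eigenvalues $1$ and $-q$, hence of finite order once $q$ is a root of unity; and products of adjacent generators do not ``degenerate into transvections'' merely by specializing $q$ (for $T_jT_{j+1}^{-1}$ this happens only in the exceptional case $d=3$). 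What actually works is the commutator $u=[s_1,(\Delta')^2]$, where $\Delta'$ is the fundamental element of the sub-braid group on $s_2,\dots,s_n$ (resp. $s_2,\dots,s_{n-1}$). The mechanism has two ingredients: (i) the sub-representation on which that sub-braid group acts has dimension $\equiv -2 \pmod d$, so by Lemma \ref{scalar} (this is where $d\geq 3$ enters) the central element $(\Delta')^2$ acts on it by a scalar $\lambda\neq 1$ while fixing a distinguished isotropic vector $v$ coming from the reducibility of the Burau representation in dimensions $\equiv -1\pmod d$ (Lemma \ref{isotropic}); and (ii) $s_1$ does not preserve the complement $X$, so Lemma \ref{degcommutator} forces $[s_1,(\Delta')^2]$ to be a nonzero element of the unipotent radical of the parabolic stabilizing $Av\subset Av\oplus X\subset V$. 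Note that Lemma \ref{scalar}'s role is precisely to make this commutator nontrivial---not, as in your first step, to guarantee Zariski density or semisimplicity of the target.

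Second, and more seriously, this construction exists only when $n\equiv 0$ or $-1\pmod d$, because only then is the required invariant vector available; your proposal treats all $n\geq 2d$ uniformly and says nothing about the remaining congruence classes. The paper closes this gap by induction on $n$: Lemma \ref{inductivelemma} shows that if $\Gamma_n(d)$ meets the integral unitary groups of the two nondegenerate codimension-one subspaces $V_{n-1}=\langle e_1,\dots,e_{n-1}\rangle$ and $W_{n-1}=\langle e_2,\dots,e_n\rangle$ (each carrying a copy of the Burau representation for $n-1$, handled by the induction hypothesis) in finite-index subgroups, and their intersection contains an isotropic vector (true since $n-2\geq d$), then $\Gamma_n(d)$ has finite index in $U(V_n)(O_K)$---the point being that the two smaller Heisenberg groups generate the larger one up to finite index; the degenerate case $n=kd-1$ is then reassembled from the reductive part and the vector part. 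Without this inductive step or a substitute, your argument proves the theorem only for the two special congruence classes. A minor remark in the other direction: your finishing criterion (Zariski density plus the $O_K$-points of one unipotent radical, in higher rank) is a genuine theorem and would suffice, but it obliges you to \emph{prove} Zariski density, which you only assert; the paper instead produces integral points of two \emph{opposite} unipotent radicals and applies Theorem \ref{bamise} and Corollary \ref{criterion}, so that density is never needed.
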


We now take $d=3,4,6$. In these cases, the $d$-th cyclotomic extension
$E_d=\Q(e^{2\pi i/d})$ is an imaginary quadratic extension of $\Q$ and
the totally real sub-field $K_d=\Q({\rm cos}(\frac{2\pi }{d}))$ is the
field  $\Q$ of  rationals, $O_d=\Z$  and $\Gamma  \subset U(h_n)(\Z)$.
Combining  Theorem  \ref{mainth} with  the  results of  Deligne-Mostow
(\cite{Del-Mos}, \cite{Mc}) we will prove:

\begin{theorem}\label{imagquad} If $d=3,4,6$ then {\bf for all n}, the
image of the Braid group  $B_{n+1}$ under the representation $\rho _n$
evaluated  at  a primitive  $d$-th  root  of  unity is  an  arithmetic
subgroup. \\

More precisely, the image of $\rho _n(d)$ is an arithmetic subgroup of
the integral unitary group  $U(h)(\Z)$.
\end{theorem}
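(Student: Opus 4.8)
The plan is to combine Theorem \ref{mainth} with the Deligne--Mostow theory; the special feature of $d\in\{3,4,6\}$ that makes this work for \emph{every} $n$ is that $\varphi(d)=2$. Hence there is a unique pair $\{\zeta,\overline\zeta\}$ of primitive $d$-th roots of unity, the cyclotomic field $E_d$ is imaginary quadratic, its totally real subfield is $K=\Q$, and $O_K=\Z$. Consequently the Zariski closure of $\Gamma _n(d)$ is a $\Q$-algebraic group, its real points form a \emph{single} unitary group $U(r,s)$ (there is only one archimedean place of $K$), and by Borel--Harish-Chandra $U(h)(\Z)$ is itself a lattice in $U(r,s)(\R)$. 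It therefore suffices to prove, for each $n$, that $\Gamma _n(d)$ has finite index in $U(h)(\Z)$; and since $\Gamma _n(d)$ will be Zariski dense, arithmeticity is exactly this finite-index statement.

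First I would pin down the signature. Evaluating the Squier form $h$ at $q=\zeta$ gives a Hermitian tridiagonal Toeplitz matrix with real diagonal $(q+1)^2/q=4\cos^2(\pi/d)$ and constant off-diagonal $-(q+1)$, whose eigenvalues are $4\cos(\pi/d)\bigl[\cos(\pi/d)+\cos(\pi k/(n+1))\bigr]$ for $k=1,\dots,n$. A short computation shows that $h$ is degenerate precisely when $d\mid n+1$, and that otherwise the number of negative eigenvalues is $s=\lceil(n+1)/d\rceil-1$. This produces three regimes. For $n\ge 2d$ one has $s\ge 2$, and Theorem \ref{mainth} applies verbatim: since $O_K=\Z$ it already yields that $\Gamma _n(d)$ has finite index in $U(h)(\Z)$ (these are the higher $\Q$-rank factors). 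For $n\le d-2$ one has $s=0$, so $h$ is positive definite, $U(h)(\R)=U(n)$ is compact, $U(h)(\Z)$ is finite, and the inclusion $\Gamma _n(d)\subset U(h)(\Z)$ is automatically of finite index. The content of the theorem thus lies in the intermediate range $d\le n\le 2d-1$, where $s=1$ and the relevant group is $U(n-1,1)$ of real rank one.

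In that range I would argue as follows. The eigenspace $W_\eta$ carrying the Burau monodromy at $\zeta$ is the local system studied by Deligne and Mostow for the family $y^d=\prod_{i=1}^{n+1}(x-a_i)$, all of whose weights equal $\mu_i=1/d$. For these equal weights the relevant commensurability condition holds: $(1-\mu_i-\mu_j)^{-1}=d/(d-2)$ is an integer for $d=3,4$ (the INT condition) and lies in $\tfrac12\Z$ for $d=6$ (the $\Sigma$INT condition), so by \cite{Del-Mos} (see also \cite{Mc}) the image $\Gamma _n(d)$ is a lattice in $U(n-1,1)(\R)$. Now $\Gamma _n(d)$ and $U(h)(\Z)$ are both lattices in the \emph{same} real group $U(n-1,1)(\R)$ with $\Gamma _n(d)\subset U(h)(\Z)$, so comparing covolumes forces $[U(h)(\Z):\Gamma _n(d)]<\infty$. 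This covolume mechanism is exactly why the Deligne--Mostow non-arithmetic examples (which require $\varphi(d)>2$, hence a second archimedean place of $K$) cannot arise here: with $K=\Q$ there is no room for them.

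The hard part will be the bookkeeping in the intermediate range. First, one must identify the Burau eigenspace $W_\eta$ with the precise Deligne--Mostow local system so that the lattice property transfers to $\Gamma _n(d)$ itself, rather than merely to a projection of it (a distinction that collapses here only because $K=\Q$ leaves a single archimedean factor). Second, and more delicate, are the degenerate specializations $d\mid n+1$ (namely $n=d-1$ and $n=2d-1$ within this range), where $h$ acquires a one-dimensional radical and $U(h)$ a unipotent radical. There the monodromy must be shown to exhaust, up to finite index, both the semisimple quotient (via Deligne--Mostow and the covolume comparison again, now for $U(n-2,1)$ or a compact quotient) and the integral unipotent part of $U(h)(\Z)$. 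This degenerate analysis, rather than the rank-one lattice property itself, is where the main care is needed.
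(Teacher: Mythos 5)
Your three-regime structure is exactly the paper's: $n\geq 2d$ via Theorem \ref{mainth}; $n\leq d-2$ via compactness of $U(h)(\R)$, so that $U(h)(\Z)$ and hence $\Gamma_n(d)$ is finite; and the intermediate range via Deligne--Mostow (the paper simply quotes Theorem 10.3 and Table 9 of \cite{Mc}, where you re-derive the INT/$\Sigma$INT condition for equal weights $1/d$; your covolume comparison is what the paper means by ``$\Gamma$ is a lattice, i.e.\ has finite index in $U(h)(\Z)$''). The one place where your proposal stops short of a proof is precisely the place you flag: the degenerate specializations $d\mid n+1$, namely $n=d-1$ and $n=2d-1$. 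There you correctly state that one must control both the Levi quotient and the integral unipotent radical, but you leave both steps as open tasks, so as written this is a genuine gap rather than a complete argument.

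The paper closes this gap with tools that were already available to you. For $n=kd-1$ with $k=1,2$, Proposition \ref{burauirred}, part [2], identifies the Levi quotient representation with $\overline{\rho_{n-1}(d)}$, whose image in $U(h_{n-1})(\Z)$ is arithmetic by the earlier cases (finite for $n=d-1$, since $h_{d-2}$ is definite; a Deligne--Mostow lattice in $U(2d-3,1)$ for $n=2d-1$). For the unipotent part, Proposition \ref{unitaryunipotent} applies verbatim because $n$ is of the form $kd-1$: the conjugates of the commutator $u=[s_1,(\Delta')^2]$ under the subgroup generated by $s_2,\dots,s_n$ already map onto a subgroup of finite index in $U_0(\Z)$, where $U_0$ is the unipotent radical of $U(h)$. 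Since $U(h)(\Z)$ is, up to finite index, the semidirect product $U(h_{n-1})(\Z)\ltimes U_0(\Z)$, these two facts assemble into the finite-index statement. Note that in this degenerate case no covolume argument is possible ($U(h)$ is not reductive, and $U(h)(\R)$ carries no finite invariant volume in the relevant sense), so the unipotent construction is not a refinement of your lattice comparison but a genuinely necessary replacement for it.
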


We refer to Section  \ref{imagquadproofsection} for a proof of Theorem
\ref{imagquad}. \\

Now consider the ring $A=\Z[q,q^{-1}]/(q^d-1)$, where $R=\Z[q,q^{-1}]$
as before  . The free $A$  module $A^n$ of  rank $n$ may be  viewed in
particular, as a free Abelian group of rank $nd$, and $GL_n(A)$ can be
viewed as a subgroup of  $GL_{nd}(\Z)$. We say that a subgroup $\Gamma
\subset GL_n(A)$ is arithmetic, if it has finite index in its integral
Zariski closure in $GL_{nd}(\Z)$.  A Theorem of \cite{A'C} and Theorem
\ref{mainth} together imply the following:

\begin{theorem} \label{arithmetic} Consider the Burau representation
\[\rho: B_{n+1}\ra GL_n(\Z[q,q^{-1}]/(q^d-1)).\] Then the
image of  $\rho$ is an arithmetic  group for all $d\geq  2$ and $n\geq
2d$.
\end{theorem}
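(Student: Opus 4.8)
The plan is to split the coefficient ring by the factorization $q^d-1=\prod_{e\mid d}\Phi_e(q)$ into distinct cyclotomic polynomials and thereby reduce to the cases already in hand. Writing $R_e=\Z[q,q^{-1}]/(\Phi_e(q))$, the simultaneous reduction maps combine into a ring homomorphism $\phi:A=\Z[q,q^{-1}]/(q^d-1)\ra\prod_{e\mid d}R_e$ that is injective and becomes an isomorphism after tensoring with $\Q$ (its cokernel is finite, being governed by the pairwise resultants of the $\Phi_e$). Composing the Burau representation $\rho:B_{n+1}\ra GL_n(A)$ with $\phi$ exhibits it as the diagonal $\prod_{e\mid d}\rho_n(e)$, where $\rho_n(e):B_{n+1}\ra GL_n(R_e)$ is the reduced Burau representation evaluated at the primitive $e$-th roots of unity. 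Hence, up to the finite-index distortion introduced by $\phi$ on integral lattices, the image $\Gamma=\rho(B_{n+1})$ sits inside $\prod_{e\mid d}\Gamma_n(e)$ and projects onto each factor $\Gamma_n(e)$.

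I would next record arithmeticity of each factor. For $e\geq 3$ the assumption $n\geq 2d$ forces $n\geq 2e$ (as $e\mid d$), so Theorem \ref{mainth} applies: $\Gamma_n(e)$ has finite index in $U(h_e)(O_{K_e})$ with $K_e=\Q(\cos(2\pi/e))$, and is a non-cocompact lattice of $\Q$-rank, and hence real rank, at least two; its Zariski closure $\mathcal G_e=\mathrm{Res}_{K_e/\Q}U(h_e)$ is reductive with $\Q$-almost-simple derived group of real rank at least two. For $e=2$ one has $R_2=\Z$ and the evaluation at $q=-1$ (where the Hermitian form degenerates and is replaced by the skew-symmetric intersection form), and A'Campo's theorem \cite{A'C} -- the $d=2$ case of Theorem \ref{fullbraidmonodromy}, valid since $n\geq 4$ -- gives that $\Gamma_n(2)$ is arithmetic in a symplectic group of $\Q$-rank at least two. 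Finally $e=1$ gives $R_1=\Z$ and $\rho_n(1)$ is the reflection representation of the symmetric group, so $\Gamma_n(1)$ is finite and trivially arithmetic; it affects the conclusion only up to finite index.

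The substance of the proof is to pass from arithmeticity of each factor to arithmeticity of the diagonal image $\Gamma$, i.e. to show $\Gamma$ has finite index in $\mathcal G(\Z)$, where $\mathcal G\subseteq\prod_e\mathcal G_e$ is the Zariski closure of $\Gamma$ in $GL_{nd}$ (over $\Q$, the identity $A\otimes\Q=\prod_e E_e$ realizes the product). I would prove the following general assembly statement by induction on the number of factors: if $\mathcal G_1,\dots,\mathcal G_k$ are reductive $\Q$-groups whose derived groups are $\Q$-simple of real rank at least two, and $\Gamma\subseteq\prod_i\mathcal G_i(\Z)$ projects to a finite-index subgroup of each $\mathcal G_i(\Z)$, then $\Gamma$ is of finite index in the integral points of its own Zariski closure. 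The inductive step is a Goursat argument: an intersection such as $\Gamma\cap(\mathcal G_1\times\{1\})$ is normal in a finite-index subgroup of $\mathcal G_1(\Z)$, so by Margulis's normal subgroup theorem it is finite or of finite index; in the finite-index case the factor splits off (up to finite index) and one continues, while in the finite case superrigidity forces the projection to be the graph of an isogeny, the Zariski closure $\mathcal G$ drops to a proper diagonal subgroup, and $\Gamma$ is again of finite index there.

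The point I expect to require the most care -- and the reason the statement is robust -- is precisely this last dichotomy. A priori two factors attached to different divisors could be isogenous over $\Q$; for instance $e=3$ and $e=6$ both live over $\Q(\sqrt{-3})$ with $K_e=\Q$, so one cannot separate them by the field of definition alone and must fall back on the (distinct) signatures $U(r_v,s_v)$ coming from the Deligne--Mostow residue data. However, arithmeticity in the sense used here demands only finite index in the integral points of the actual Zariski closure, so whether or not such a diagonal collapse occurs, the assembly argument delivers the conclusion and one never needs to decide which case holds. The genuine hypotheses to be checked throughout are that every infinite factor has real rank at least two (guaranteed by Theorem \ref{mainth} and A'Campo's theorem) so that the normal subgroup theorem and superrigidity are available, and that the finite factor $e=1$ (and, for odd $d$, the absence of an $e=2$ factor) is harmless. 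Combining these gives arithmeticity for all $d\geq 2$ and $n\geq 2d$.
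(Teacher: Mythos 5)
Your decomposition and your factor-by-factor input match the paper's own proof: the Chinese remainder splitting $A\otimes \Q\simeq \prod_{e\mid d}E_e$, Theorem \ref{mainth} for the divisors $e\geq 3$, A'Campo for $e=2$, and the (finite) symmetric group image for $e=1$. Your assembly step, however, is a genuine variant of the paper's key Lemma \ref{products}: the paper assumes the pairs $(K_e,G_e)$ are pairwise non-isomorphic and concludes that $\Gamma$ has finite index in the \emph{full} product $\prod_{e\geq 2}U(h)(O_e)$, whereas you drop that hypothesis and conclude finite index only in the integral points of the Zariski closure of $\Gamma$, allowing a diagonal collapse. Since the theorem defines arithmeticity exactly as finite index in the integral Zariski closure inside $GL_{nd}(\Z)$, your weaker conclusion suffices, and it sidesteps the verification --- nowhere spelled out in the paper --- that factors attached to different divisors are non-isomorphic; as you note, for $e=3$ and $e=6$ both groups are unitary groups of $n$-dimensional Hermitian forms over the same extension $\Q(\sqrt{-3})/\Q$, so the paper's hypothesis can only be checked through the signatures. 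That is a real gain in robustness, at the cost of a weaker final statement (the paper's stronger product statement is what feeds Proposition \ref{arithmeticspecify}).

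There is, however, a genuine gap: your assembly statement is formulated for \emph{reductive} $\Q$-groups with $\Q$-simple derived group of real rank at least two, and you assert that every factor with $e\geq 3$ has this shape. That is false whenever $\gcd(d,n+1)>1$, a case the theorem allows (e.g.\ $d=3$, $n=8$). For a divisor $e\geq 2$ of $\gcd(d,n+1)$, Lemma \ref{isotropic} and Proposition \ref{burauirred} show that $h_e$ is degenerate, $\rho_n(e)$ has an invariant vector, and $U(h_e)\simeq Hom_A(\overline{V},Av)\rtimes U(\overline{h_e})$ has a nontrivial unipotent radical $V_e$. Theorem \ref{mainth} still applies to such a factor, but its conclusion is finite index in the integral points of this \emph{non-reductive} group; more importantly, the dichotomy driving your Goursat induction fails there: $V_e(O_e)$ is an infinite normal subgroup of infinite index in $U(h_e)(O_e)$, so the Margulis normal subgroup theorem gives no "finite or finite index" alternative, and the intersection $\Gamma\cap(\mathcal G_e\times\{1\})$ can a priori be neither. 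This is exactly why the paper splits its proof into case [1] ($d$ and $n+1$ coprime) and case [2]: in case [2] the degenerate factors are treated separately --- via Proposition \ref{unitaryunipotent} and the $n\equiv -1\pmod e$ step in the proof of Theorem \ref{mainth}, the projection of $\Gamma$ to such a factor contains a finite-index subgroup of $V_e(O_e)$ and surjects onto an arithmetic subgroup of the Levi --- and only then is the Margulis-type lemma invoked, on the semisimple parts. Your argument is correct in the coprime case; to prove the theorem as stated you must add this treatment of the non-reductive factors, e.g.\ by quotienting each degenerate factor by its unipotent radical, running your assembly statement on the semisimple quotients, and then restoring the radicals using the finite-index statement above.
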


\subsection{Description of the proof}

The proof of  Theorem \ref{mainth} is by showing  that for $n\geq 2d$,
the image $\Gamma _n(d)$ contains a large number of unipotent elements
(precisely,  $\Gamma _n(d)$  contains  an arithmetic  subgroup of  the
unipotent radical of a parabolic $\Q$- subgroup).  By using results of
Bass-Milnor-Serre  and Tits  (\cite{Ba-Mi-Se},  \cite{Ti}), and  their
extensions  to  other groups  (\cite{Ra},  \cite{Va},  \cite {Ve})  on
unipotent generators  for noncocompact arithmetic  groups of $\R$-rank
at  least two,  we  show (in  section  \ref{proofssection}) that  such
groups are arithmetic if $n\geq 2d$. \\

The  proof that  $\Gamma _n(d)$  contains sufficiently  many unipotent
elements (see  section \ref{proofssection}) is by  using induction. We
first prove this in the case when  $n\geq 2d$ and $n$ is a multiple of
$d$. Then we prove an inductive step that if we can get unipotents for
$m$, then we  get unipotents for $m+1$ ($m \geq  2d$).  This will then
cover all integers $n\geq 2d$. \\

The construction of sufficiently many unipotent elements is especially
easy to  describe when the representation is  the Burau representation
of $B_{n+1}$ at $d$-th roots  of unity and $n=2d$ (or, more generally,
when  $n=kd$ is  a multiple  of $d$,  with $k\geq  2$;  see subsection
\ref{nequals2dcase}).    We   will   exploit   the   fact   that   the
representation $\rho  _{2d-1}(d)$ is  not irreducible, but  contains a
nonzero  invariant  vector  (see Proposition  \ref{burauirred}).   Let
$s_1$ be as before. Denote by $\Delta '$ the product element
\[\Delta   '    =   (s_2s_3\cdots   s_n)(s_2s_3\cdots   s_{n-1})\cdots
(s_2s_3)(s_2).\]  It can  be shown  that  $\Delta '^2$  is central  in
$B_n'$, the  braid group generated by  $s_2,s_3,\cdots,s_n$.  Form the
commutator  $u=[s_1,(\Delta  ')^2]$.   Consider  the  group  $U\subset
B_{n+1}$ generated  by conjugates of $u$ of  the form $\{huh^{-1}:h\in
B_n'\}$. We show (in subsection \ref{nequals2dcase}) that the image of
this group $U$ under the Burau representation at $d$-th rots of unity,
is  an arithmetic  subgroup of  the unipotent  radical of  a (maximal)
parabolic subgroup of the unitary group $U(h)$. \\

This is enough to prove that the image of $B_{n+1}$ under $\rho _n(d)$
is arithmetic, by the criteria of section \ref{alggroups}. \\

In     section    \ref{monodromyandburau},    we     derive    Theorem
\ref{fullbraidmonodromy}  from Theorem  \ref{mainth},  by establishing
the  precise  relationship  between  the monodromy  representation  of
Theorem   \ref{fullbraidmonodromy}  and   the   Burau  representation;
although this  connection is  essentially well known  (cf.  \cite{Mc},
Theorem  5.5),  we  will  give  a  more  precise  description  of  the
connection in section \ref{monodromyandburau}.

\subsection{Organisation of the paper}

This paper  is organised as  follows.  In Section  \ref{alggroups}, we
recall some basic notions from  algebraic groups and state a criterion
due to Bass-Milnor-Serre and others on unipotent generators for higher
rank non-uniform  lattices.  In  Section \ref{unitary}, we  will apply
this criterion to certain integral unitary groups. \\

The  main section of  the paper  is Section  \ref{burausection}.  In
Section \ref{burausection}, we show that the image of the braid
group  $B_{n+1}$ at  a primitive  $d$-th root  of unity  contains many
unipotent elements (more precisely, contains an arithmetic subgroup of
the unipotent radical  of a parabolic subgroup defined  over the field
$K_d$).   The  criteria  of   section  \ref{unitary}  then  imply  the
arithmeticity when $n\geq 2d$ for all $d$. \\

In  Section  \ref{applications},  we  first  consider  (in  subsection
\ref{mcmullen}),  complex  reflection  groups  corresponding  to  root
systems of  type $A$ and show  the arithmeticity of the  images of the
corresponding  Artin groups  $A_n(q)$ (see  Section \ref{applications}
for references to definitions) where $q$ is a primitive $d$-th root of
unity and $n\geq 2d$; the image  of the group $A_n(q)$ turns out to be
the same  as the image  of $\Gamma _n(d)$  into one of the  factors of
$U(h)(K\otimes \R)$.  Hence the arithmeticity of the  image of $A_n(q)$
will be shown to be  an immediate consequence of Theorem \ref{mainth}.
This answers question 5.6 in \cite{Mc}, in many cases.  \\

In  the   next  subsection   (\ref{sarnak}),  we  show   that  Theorem
\ref{mainth} implies the arithmeticity of the monodromy of certain one
variable hypergeometric functions of type $_nF_{n-1}$, in some special
cases of the  parameters. The point here is  that arithmeticity can be
proved for  an {\it infinite  family} of parameters associated  to the
hypergeometric equations $_nF_{n-1}$. \\

\noindent {\bf  Acknowledgements:} I am  very grateful to  Madhav Nori
for mentioning to me this problem of arithmeticity of the image of the
braid group  ( Theorem  \ref{mainth});  his  formulation of  the
problem in purely algebraic terms  was most helpful.  I also thank him
for generously sharing his insights on many of the questions addressed
here  and  for  very  helpful  remarks  on  many  of  the  proofs  and
intermediate results (especially the  remark that an earlier proof for
the arithmeticity of the image of the Burau representation should also
go through for the Gassner representation). \\

I  thank  Peter  Sarnak   for  mentioning  the  monodromy  problem  of
subsection  \ref{sarnak} and  for very  interesting discusions  on the
material of subsection \ref{sarnak}.\\

I thank Curtis Mcmullen for helpful communications concerning question
(5.6) of \cite{Mc}, and pointing  out some corrections on the material
in  subsection \ref{mcmullen}.  \\

I extend to the referee my  hearty thanks for a careful reading of the
MS  and for  pointing  out numerous  corrections  and suggestions  for
improving  the  exposition of  the  paper. \\

The  support of the  J.C.Bose  fellowship for  the period  2008-2013 is
gratefully acknowledged.

\section{Algebraic groups}\label{alggroups}

For the facts on algebraic groups  stated in this section, we refer to
\cite{Bor-Ti}.

\subsection{Examples of algebraic groups} 

If $K$  is a number field and  $G\subset GL_n$ is a  subgroup which is
the set of zeroes of a collection of polynomials in the matrix entries
of $GL_n$,  such that  the coefficients of  these polynomials  lie in
$K$, then $G$ is said to be an algebraic group defined over $K$. \\

For  example, the  groups  $SL_n$ and  $Sp_{2g}$  are algebraic  groups
defined over $K$.  If $E/K$  is a quadratic extension and $H:E^n\times
E^n\ra E$  is a $K$ bilinear  form which is Hermitian with respect to
the non-trivial automorphism  of $E/K$, then the unitary  group of the
Hermitian form $h$ is an algebraic group defined over $K$.

\subsection{Arithmetic groups} \label{arithmeticgroups} 

Let  ${\mathcal G}\subset GL_N$  be a  linear algebraic  group defined
over $\Q$.  A subgroup $\Gamma \subset {\mathcal G}(\Z)$ is said to be
an {\it  arithmetic subgroup of}  ${\mathcal G}(\Q)$ if $\Gamma  $ has
finite  index in the  intersection ${\mathcal  G}(\Z)={\mathcal G}\cap
GL_N(\Z)$  (see  \cite{Ra2},  Chapter  X,  def  (10.12),  p.   165  or
\cite{Pl-Ra}, Chapter 4,  (4.1), p.  171).  By a  theorem of Borel and
Harish-Chandra $\Gamma $ is a lattice (a discrete subgroup with finite
covolume) in the group ${\mathcal G}(\R)$ of real points of ${\mathcal
G}$  provided the  identity component  ${\mathcal G}^0$  of  the group
${\mathcal  G}$  does  not  have non-trivial  homomorphisms  into  the
multiplicative  group  ${\mathbb  G}_m$   defined  over  $\Q$  (for  a
reference,  see  \cite{Pl-Ra} Theorem  4.13,  p  213).  It is  also  a
consequence of the result  of Borel and Harish-Chandra that ${\mathcal
G}(\Z)$ is a  co-compact lattice if and only  if $\Q$-rank (${\mathcal
G})=0$ (see subsection \ref{rank} for the notion of $\Q$-rank).  \\

Let $O_K$ denote the ring of integers in $K$; The group $G(O_K)$ is by
definition,  the intersection $GL_n(O_K)\cap  G$.  A  subgroup $\Gamma
\subset G(K)$  is said  to be arithmetic  if the  intersection $\Gamma
\cap G(O_K)$  has finite index  in $\Gamma $ and  in $G(O_K)$.  This
definition appears to be different from the one given in the preceding
paragraph.  But, these two  definitions are equivalent.  This is shown
by replacing  $G$ by  the Weil restriction  of scalars  ${\mathcal G}=
R_{K/\Q}G$ (for  a reference, see \cite{Pl-Ra}, 2.1.2,  page 49).  The
theory of Weil restriction of  scalars says that there exists a linear
algebraic group  ${\mathcal G}$  defined over $\Q$  (and unique  up to
isomorphism)  with   the  following  property:   for  any  commutative
$\Q$-algebra $A$, the group  ${\mathcal G}(A)$ is naturally isomorphic
to  $G(K\otimes _{\Q}  A)$,  the group  of $K\otimes  _{\Q}A$-rational
points of the $K$-algebraic group $G$.

\begin{defn} The  group ${\mathcal G}$ is called  the Weil restriction
of  scalars  from  $K$ to  $\Q$,  of  the  group  $G$. It  is  denoted
${\mathcal G}=R_{K/\Q}(G)$.
\end{defn}

We then  have $R_{K/\Q}G(\Z)\simeq G(O_K)$ where the  symbol $\simeq $
means that there is equality up  to subgroups of finite index (the two
groups  are {\it  commensurable}).  Moreover,  ${\mathcal G}(\R)\simeq
\prod  _{v\mid  \infty}G(K_v)$, where  the  product  is  over all  the
inequivalent archimedean completions of $K$  (see 2.1.2, on pp. 50 and
51 of  \cite{Pl-Ra}). Thus, $G(O_K)$ is  a lattice in  the ambient Lie
group $\prod  G(K_v)$ where the  product is over all  the inequivalent
non-archimedean completions  $K_v$ of $K$.  We recall  that any number
field has $r_1$ completions $K_v$  which are isomorphic to $\R$ and up
to complex  conjugation, $r_2$ completions $K_v$  which are isomorphic
to $\C$ such  that $r_1+2r_2$ is the degree of  the extension $K$ over
$\Q$. \\

We will have  occasion to deal with images  of arithmetic groups under
morphisms     of     $\Q$-algebraic     groups     (see     subsection
\ref{prooffullbraid}).   In  particular,  we  will use  the  following
result.   Let $f: {\mathcal  G} \ra  {\mathcal G'}$  be a  morphism of
algebraic groups defined over $\Q$.  This induces a homomorphism, also
denoted $f$, from ${\mathcal  G}(\Q)$ into ${\mathcal G'}(\Q)$.  It is
elementary  that if $\Gamma  \subset {\mathcal  G}(\Z)$ is  a suitable
subgroup (a  suitable ``congruence  subgroup'') of finite  index, then
$f(\Gamma )$ lies in ${\mathcal G'}(\Z)$. For a proof of the following
lemma, see Corollary (10.14) of \cite{Ra2}.

\begin{lemma} \label{arithmeticimage} The image of  $\Gamma $ under $f$
is an arithmetic subgroup of ${\mathcal G'}(\Z)$.
\end{lemma}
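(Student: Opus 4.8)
The plan is to reduce the statement to a comparison of lattices. First I would replace ${\mathcal G'}$ by the image ${\mathcal H}=f({\mathcal G})$. Since $f$ is a morphism of algebraic groups defined over $\Q$, its image is a Zariski-closed subgroup, and it is again defined over $\Q$ (the image of a homomorphism of algebraic groups is closed, and rationality is preserved by Chevalley's theorem on the image of a $\Q$-morphism). After this reduction $f:{\mathcal G}\ra {\mathcal H}$ is surjective, and the Zariski closure of $f(\Gamma)$ is a $\Q$-subgroup of ${\mathcal H}$; it is harmless to assume $\Gamma$ is Zariski dense in ${\mathcal G}$ (which holds for the semisimple groups occurring here by the Borel density theorem), so that this closure is all of ${\mathcal H}$. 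In the sense of \cite{Gr-Sch}, ``$f(\Gamma)$ is arithmetic in ${\mathcal G'}(\Z)$'' then means precisely that $f(\Gamma)$ has finite index in ${\mathcal H}(\Z)$. Since $\Gamma$ is the congruence subgroup with $f(\Gamma)\subset {\mathcal H}(\Z)$ singled out before the statement, and since $f(\Gamma)$ has finite index in $f({\mathcal G}(\Z))$ (it is the image of the finite coset space ${\mathcal G}(\Z)/\Gamma$), the whole lemma is reduced to the single assertion that $f({\mathcal G}(\Z))$ has finite index in ${\mathcal H}(\Z)$.

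For this I would argue at the level of real points. Let $N=\ker f$, a normal $\Q$-subgroup, giving an exact sequence $1\ra N\ra {\mathcal G}\stackrel{f}{\ra}{\mathcal H}\ra 1$. By the theorem of Borel and Harish-Chandra quoted in subsection \ref{arithmeticgroups}, ${\mathcal G}(\Z)$ is a lattice in ${\mathcal G}(\R)$ and $N(\Z)={\mathcal G}(\Z)\cap N$ is a lattice in $N(\R)$. The standard lattice criterion (see \cite{Ra2}, Theorem 1.13) then shows that, because ${\mathcal G}(\Z)\cap N=N(\Z)$ is a lattice in $N(\R)$, the image of ${\mathcal G}(\Z)$ in the quotient ${\mathcal G}(\R)/N(\R)$ is discrete and is in fact a lattice there. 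Now $f$ identifies ${\mathcal G}(\R)/N(\R)$ with a subgroup of ${\mathcal H}(\R)$ which contains the identity component ${\mathcal H}(\R)^0$ (since $f$ is onto over $\C$), and hence has finite index in ${\mathcal H}(\R)$. Thus $f({\mathcal G}(\Z))$ is a lattice in ${\mathcal H}(\R)$; since ${\mathcal H}(\Z)$ is also a lattice in ${\mathcal H}(\R)$ and $f(\Gamma)\subset {\mathcal H}(\Z)$, the fact that a lattice contained in another lattice of the same ambient group has finite index in it yields that $f({\mathcal G}(\Z))$ has finite index in ${\mathcal H}(\Z)$, which is what was needed.

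The main obstacle is the hypothesis, used twice above, that integral points are lattices, i.e. the Borel--Harish-Chandra condition that the relevant $\Q$-group carry no nontrivial $\Q$-rational characters. For ${\mathcal G}$ itself this is assumed (and holds in our semisimple setting), but the kernel $N$ may well carry nontrivial $\Q$-characters, in which case $N(\Z)$ fails to be a lattice in $N(\R)$ and the clean quotient argument breaks down. Resolving this requires the finer reduction theory of Borel and Harish-Chandra: one must split off the central $\Q$-split torus part of $N$ and control its contribution, or equivalently work inside the subgroup of ${\mathcal G}$ on which all $\Q$-characters are trivial. This is precisely the technical core of the general statement, and for the lemma in full generality I would ultimately invoke Corollary (10.14) of \cite{Ra2} rather than re-derive this reduction theory; in all the applications of the present paper the ambient groups are the reductive unitary groups $U(h)$, for which the character issue does not arise and the lattice argument above applies directly.
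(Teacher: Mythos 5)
Your proposal is correct, but it follows a genuinely different route from the paper, for the simple reason that the paper has no argument of its own: it disposes of this lemma entirely by the citation to Corollary (10.14) of \cite{Ra2}, which is exactly the statement that the image of an arithmetic group under a $\Q$-morphism of $\Q$-algebraic groups is an arithmetic subgroup of the image group. What you do differently is to reconstruct a proof of that fact in the transparent cases: after the legitimate reductions (replace ${\mathcal G'}$ by ${\mathcal H}=f({\mathcal G})$, which is closed and defined over $\Q$, and replace ${\mathcal G}$ by the Zariski closure of $\Gamma$), your combination of Borel--Harish-Chandra, the quotient criterion of \cite{Ra2} (Theorem 1.13) applied to the kernel $N$, and the covolume argument that a lattice contained in another lattice of the same ambient group has finite index, is sound --- granted the hypothesis, which you flag honestly, that $N$ carries no nontrivial $\Q$-rational characters. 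That hypothesis does hold in every application made in this paper, since the groups occurring there are semidirect products of unitary groups $U(h)$ (whose $\Q$-character groups are trivial, the norm-one torus being anisotropic) with unipotent vector groups (which admit no characters at all); and in full generality you fall back on the very same Corollary (10.14), so nothing is lost relative to the paper. In short, the paper's citation buys completeness and full generality at zero cost, while your route buys a self-contained and explanatory argument in all the cases the paper actually needs. One small slip to correct: your final sentence asserts that $f({\mathcal G}(\Z))$ has finite index in ${\mathcal H}(\Z)$, but $f({\mathcal G}(\Z))$ need not be \emph{contained} in ${\mathcal H}(\Z)$ --- that is precisely why the paper passes to a congruence subgroup $\Gamma$ before stating the lemma; the correct conclusion of your covolume comparison is that $f(\Gamma)\subset {\mathcal H}(\Z)$, being a lattice in ${\mathcal H}(\R)$ (as it has finite index in the lattice $f({\mathcal G}(\Z))$), has finite index in the lattice ${\mathcal H}(\Z)$, while $f({\mathcal G}(\Z))$ itself is only \emph{commensurable} with ${\mathcal H}(\Z)$.
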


Let $\theta : V\ra V'$ be a linear map of $\Q$-vector spaces and $\rho
:\Delta \ra GL(V)$  be a representation of a  group $\Delta$. Then the
composite $\theta \circ  \rho $ is a representation  of $\Delta $. The
following is immediate from Lemma \ref{arithmeticimage}.

\begin{proposition}  \label{arithmeticimageproposition}  If  the image  of
$\rho $  is an arithmetic subgroup  of $GL(V)$, then the  image of the
composite $\theta \circ \rho $ is an arithmetic subgroup of $GL(V')$.
\end{proposition}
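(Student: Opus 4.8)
The plan is to realize the composite $\theta\circ\rho$ as $f\circ\rho$ for a single morphism $f$ of $\Q$-algebraic groups and then invoke Lemma \ref{arithmeticimage} verbatim. First I would fix notation: write $\Gamma=\rho(\Delta)\subset GL(V)$ for the image, and let $\mathcal G\subset GL(V)$ be its Zariski closure, a linear algebraic group defined over $\Q$. The hypothesis that the image of $\rho$ is an arithmetic subgroup of $GL(V)$ means, by the definition recalled in subsection \ref{arithmeticgroups}, that $\Gamma$ has finite index in $\mathcal G(\Z)$; thus $\Gamma$ is an arithmetic subgroup of $\mathcal G(\Q)$ and is exactly the kind of input Lemma \ref{arithmeticimage} expects.

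Next I would make precise the representation carried by $V'$. For the composite $\theta\circ\rho$ to be a representation of $\Delta$ on $V'$, the map $\theta$ must intertwine $\rho$ with the $\Delta$-action on $V'$; in the situation of the applications $\theta$ is a $\Delta$-equivariant isomorphism, so conjugation $g\mapsto \theta g\theta^{-1}$ is an isomorphism $GL(V)\ra GL(V')$ of algebraic groups defined over $\Q$, and I take $f$ to be this conjugation map. (More generally, if $\theta$ is only a surjective equivariant map, then $\ker\theta$ is a $\Delta$-submodule of $V$, hence preserved by every element of $\Gamma$; since ``$g$ preserves the $\Q$-subspace $\ker\theta$'' is a Zariski-closed condition defined over $\Q$, it is inherited by $\mathcal G$, so $\mathcal G$ acts on $V/\ker\theta\cong V'$ and again yields a $\Q$-morphism $f:\mathcal G\ra GL(V')$.) In either case $f(\rho(\delta))=(\theta\circ\rho)(\delta)$ for every $\delta\in\Delta$, so $f(\Gamma)=(\theta\circ\rho)(\Delta)$, and the Zariski closure $\mathcal G'=f(\mathcal G)$ is a $\Q$-algebraic group in $GL(V')$.

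With the $\Q$-morphism $f:\mathcal G\ra\mathcal G'$ in hand, Lemma \ref{arithmeticimage} applies directly: the image $f(\Gamma)$ of the arithmetic group $\Gamma$ is an arithmetic subgroup of $\mathcal G'(\Z)$. Since $\mathcal G'$ is precisely the Zariski closure of $(\theta\circ\rho)(\Delta)$ in $GL(V')$, this is exactly the assertion that the image of the composite representation is an arithmetic subgroup of $GL(V')$, which completes the argument.

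I expect no genuine obstacle here, consistent with the statement being ``immediate'' from Lemma \ref{arithmeticimage}; the work is purely bookkeeping. The only two points requiring care are, first, to check that the composite is induced by an honest morphism of algebraic groups \emph{defined over $\Q$}, rather than a merely set-theoretic homomorphism between the images — this is what the conjugation map (respectively the $\ker\theta$-invariance argument) provides; and second, to recall that Lemma \ref{arithmeticimage} only carries a suitable finite-index congruence subgroup of $\Gamma$ into $\mathcal G'(\Z)$. Since arithmeticity is a commensurability notion, passing between $\Gamma$ and such a subgroup does not affect the conclusion.
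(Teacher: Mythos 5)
Your proposal is correct and follows essentially the same route as the paper: the paper gives no separate argument at all, declaring the Proposition ``immediate from Lemma \ref{arithmeticimage}'', and your proof is exactly that deduction with the implicit bookkeeping made explicit (realizing the composite via a $\Q$-morphism $f:\mathcal G\ra GL(V')$ obtained from the $\Gamma$-invariance, hence $\mathcal G$-invariance, of $\ker\theta$, and noting that arithmeticity is a commensurability notion). The points of care you flag are precisely what the word ``immediate'' in the paper suppresses.
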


\subsection{K-rank}  \label{rank} In  this subsection,  we  define the
notion of the $K$-rank and $\Q$-rank of a linear algebraic group.

\begin{defn} An  algebraic group  $G$ is said  to be  $K$-isotropic if
there exists an  injective morphism ${\mathbb G}_m ^r\ra  G$ of linear
algebraic groups  defined over $K$  for some $r\geq 1$.   The $r$-fold
product ${\mathbb  G}_m^r$ is called the $K$-split  torus of dimension
$r$ and the embedding is  called a $K$-embedding.  The $K$-rank of $G$
is by definition the {\it maximum},  call it $r$, of the dimensions of
$K$-split tori which are $K$-embedded  in $G$.  Let $T$ be a $K$-split
torus  of dimension $r$  which is  $K$-embedded in  $G$.  Then  $T$ is
called  a maximal  $K$-split  torus.   It is  known  that all  maximal
$K$-split tori are conjugate under the group $G(K)$.
\end{defn}

For example, for  any $K$, one can compute the  $K$-rank of the groups
$SL_n$ and $Sp_{2g}$: the $K$-rank of $SL_n$ is $n-1$; the $K$-rank of
$Sp_{2g}$  is  $g$. The  $K$-rank  may  depend on  $K$:  if  $D$ is  a
quaternionic  division  algebra  over  $\Q$, let  $G=SL_2(D)$  be  the
algebraic  group  defined  over   $\Q$.  Then  $\Q$-rank  $(G)=1$;  if
$K\subset D$  is a quadratic extension  of $\Q$, then  as an algebraic
group over $K$, $G$ is isomorphic to $SL_4$ and $K$-rank $(G)= 3$. \\

It follows from  definitions that $K$-rank $(G)$=$\Q$-rank $({\mathcal
G})$ where ${\mathcal G}$ is  the Weil restriction of scalars from $K$
to $\Q$.

\subsection{Parabolic subgroups} \label{opposite} 

Suppose now that $G\subset GL_n$  is an algebraic group defined over a
number field $K$;  embed $K$ in $\C$ and suppose  that the Lie algebra
of $G(\C)$ is a  simple Lie algebra over $\C$. Then $G$  is said to be
{\it absolutely almost simple}. \\

An algebraic subgroup $P\subset G$  is said to be a parabolic subgroup
defined over $K$, if $P$ is defined over $K$ and the quotient $G/P$ is
a projective variety. For  example a subgroup $P\subset GL_n=GL(V)$ is
a parabolic  subgroup if and only  if it is the  subgroup preserving a
partial flag
\[\{0\}\subset W_1 \subset W_2\subset \cdots  \subset W_{r-1}\subset
V,\] where $W_i$ form a  sequence of subspaces of $V$ with $W_i\subset
W_{i+1}$. \\

Let $G$  be an absolutely  almost simple algebraic group  defined over
$K$.  The group $G$ has positive  $K$-rank if and only if $G$ contains
a  proper  parabolic  subgroup  $P$  defined over  $K$.  Suppose  that
$r=K$-rank $(G)\geq 1$  and $T$ a maximal $K$-split  torus in $G$.  Then
there   exists   a   parabolic   $K$-subgroup  $P$   containing   $T$.
Furthermore,  there  exists  a  non-trivial maximal  unipotent  normal
subgroup  $U$ of $P$,  called the  unipotent radical  of $P$.  The Lie
algebra of $U$ is stable under  the action of the group $T$ and splits
into character spaces for the adjoint action of $T$.  \\

The structure theory  of parabolic subgroups says that  there exists a
parabolic  subgroup $P^{-}$ defined  over $K$  containing $T$,  with a
unipotent  radical $U^{-}$  such that  the characters  of $T$  on $Lie
(U^{-})$ are  the inverses of the  characters of $T$  on $Lie(U)$. The
group $P^{-}$ is said to be  {\it opposite} to $P$ and $U^{-}$ is said
to  be {\it  opposed} to  $U$.   The intersection  $M=P\cap P^{-}$  is
called  a Levi  subgroup of  $P$ and  we have  the  Levi decomposition
$P=MU$. The group $M$ normalises both $U$ and $U^{-}$. \\

A $K$-parabolic subgroup  $P_0$ containing $T$ is {\it  minimal} if it
is  of  the  smallest  dimension  among  the  $K$-parabolic  subgroups
containing $T$. If $P\supset P_0$ then we have the (reverse) inclusion
of  the unipotent  radicals $U\subset  U_0$.  There  exists  a minimal
parabolic subgroup $P_0^{-}$ opposed  to $P_0$, with unipotent radical
$U_0^{-}$.

\subsection{The real  rank} 

Suppose  that  $G$ is  an  absolutely  almost  simple algebraic  group
defined over a  number field $K$.  Write $G_{\infty}$  for the product
group $G_{\infty}= \prod _{v~arch}G(K_v)$ (which is a real semi-simple
group), where $v$ runs over all the archimedean completions of $K$; we
write
\[\infty {\rm -rank}(G)=  \R {\rm -rank}(G_{\infty} )=  \sum  _{v~arch}  K_v{\rm 
-rank}
(G),\] and call this the {\it real rank} of $G_{\infty}$.

\subsection{A criterion for arithmeticity}

Bass,  Milnor and  Serre proved  that for  any $N\geq  1$,  the $N$-th
powers  of  the  upper  and  lower triangular  unipotent  matrices  in
$SL_n(\Z)$  generate a  subgroup  of finite  index  in $SL_n(\Z)$  for
$n\geq  3$.   A  similar  result  holds for  $Sp_{2g}(\Z)$.   In  this
subsection, we  describe an analogous result for  all higher $\R$-rank
groups. \\

The following  result is due to  many people: for $G=  SL_n ~(n\geq 3)
~~{\rm or} \quad Sp_{2g} ~(g\geq 2)$,  this is due to Bass, Milnor and
Serre  \cite{Ba-Mi-Se}; when  $G$  is  split over  $K$  (i.e.  when  a
maximal $K$  -split torus  is also a  a maximal $\C$-split  torus), to
Tits \cite{Ti} and for classical  groups $G$ with $K$-rank $\geq 2$ to
Vaserstein  \cite{Va}.  The  case  of  a general  $G$  is  handled  in
Raghunathan \cite{Ra} and \cite{Ve}.

\begin{theorem} \label{bamise} Let $G$  be an absolutely almost simple
algebraic group defined over a  number field $K$.  Assume that $K-rank
(G)\geq  1$, $\R-rank(G_{\infty})\geq  2$, and  that  $P_0,P_0^{-}$ are
minimal parabolic  $K$-subgroups of $G$ with  unipotent radicals $U_0$
and $U_0^{-}$. Then the following hold.  \\

{\rm [1]} For every integer $N\geq 1$, let $\Delta _N$ be the subgroup
of $G(O_K)$ generated  by $N$-th powers of the  elements in $U_0(O_K)$
and  $U_0^{-}(O_K)$.    Then  $\Delta  _N$  has  finite  index  in
$G(O_K)$. \\

{\rm  [2]} If  $\Delta _N'\subset  \Delta  _N$ is  an infinite  normal
subgroup, then $\Delta _N'$ also has finite index in $G(O_K)$.
\end{theorem}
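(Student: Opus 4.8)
The plan is to deduce [1] from strong approximation together with the congruence subgroup property, and [2] from Margulis's normal subgroup theorem. I would first record the one purely group-theoretic input: since $U_0(O_K)$ is a finitely generated torsion-free nilpotent group, the subgroup generated by the $N$-th powers of its elements has finite index in $U_0(O_K)$, and likewise for $U_0^-(O_K)$. Thus $\Delta_N$ contains finite-index subgroups $V\subset U_0(O_K)$ and $V^-\subset U_0^-(O_K)$. Because the $N$-th power map is a bijection of the $\Q$-unipotent group $U_0$ and $U_0(O_K)$ is Zariski dense in $U_0$, the image $V$ is Zariski dense in $U_0$ and $V^-$ in $U_0^-$; since $U_0$ and $U_0^-$ generate $G$, the group $\Delta_N$ is Zariski dense in $G$.

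The heart of [1] is to show that $\Delta_N$ contains a congruence subgroup of $G(O_K)$. Passing (as the cited references do) to the simply connected cover so that strong approximation applies, I would compute the closure of $\Delta_N$ in $G(O_v)$ for each finite place $v$ of $K$. For the finitely many $v$ dividing $N$ the group $V$ is open in $U_0(O_v)$, while for all remaining $v$ the $N$-th powers already topologically generate $U_0(O_v)$; the analogous statements hold for $V^-$. Since a minimal parabolic and its opposite generate $G$, the closure of $\Delta_N$ in $\prod_v G(O_v)$ is open, and strong approximation identifies this closure with an open subgroup of the congruence completion. When $K\text{-rank}(G)\geq 2$ this openness can be checked directly, using the Chevalley commutator formulas for the relative root system to manufacture whole root subgroups from commutators of the shape $[u^N,w]$ with $u\in U_0(O_K)$ and $w\in U_0^-(O_K)$; this is the Bass--Milnor--Serre--Tits--Vaserstein mechanism. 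The congruence subgroup property, valid here because $\R\text{-rank}(G_\infty)\geq 2$ (Raghunathan, and \cite{Ra}, \cite{Ve}), then forces the congruence kernel to be finite, so that a subgroup with open congruence closure, such as $\Delta_N$, has finite index in $G(O_K)$. This gives [1].

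For [2], I would observe that by [1] the group $\Delta_N$ has finite index in $G(O_K)$ and is therefore itself an irreducible lattice in the higher-rank semisimple group $G_\infty=\prod_{v\mid\infty}G(K_v)$, the irreducibility coming from the $K$-simplicity of $G$ together with the finiteness of $[K:\Q]$. Margulis's normal subgroup theorem asserts that every normal subgroup of such a lattice is either finite (central) or of finite index. Since $\Delta_N'$ is assumed infinite and normal in $\Delta_N$, it cannot be finite; hence it has finite index in $\Delta_N$ and therefore in $G(O_K)$.

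I expect the main obstacle to be the case $K\text{-rank}(G)=1$ with $\R\text{-rank}(G_\infty)\geq 2$, which is exactly the regime arising for the unitary factors $U(r,1)$ in our application. There the relative root system has rank one, so the elementary commutator calculations that suffice when $K\text{-rank}(G)\geq 2$ do not by themselves produce enough unipotent elements, and one genuinely needs the deeper congruence subgroup property of Raghunathan (and its extension in \cite{Ve}) to pass from open congruence closure to finite index. A secondary technical point will be the reduction to the simply connected group and the verification that strong approximation and the identification of closures behave well under passing between $G$ and its cover, so that the finite-index conclusion descends to the group $G(O_K)$ in the statement.
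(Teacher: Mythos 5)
First, a point about the comparison itself: the paper does not prove Theorem \ref{bamise}. It is quoted as a known result, attributed to Bass--Milnor--Serre \cite{Ba-Mi-Se} for $SL_n$ and $Sp_{2g}$, to Tits \cite{Ti} for split groups, to Vaserstein \cite{Va} for classical groups of $K$-rank $\geq 2$, and to Raghunathan \cite{Ra} and \cite{Ve} for general $G$; the only argument the paper supplies in this vicinity is the remark that part [2] follows from the Margulis normal subgroup theorem, and the deduction of Corollary \ref{criterion}. So your proposal has to be judged against those references. Your treatment of [2] is correct and is exactly the paper's remark: given [1], $\Delta_N$ is an irreducible higher-rank lattice and Margulis's theorem applies. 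Your preliminary reductions for [1] (that $N$-th powers generate a finite-index subgroup of the finitely generated nilpotent group $U_0(O_K)$, and that $\Delta_N$ is Zariski dense) are also correct.

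The gap is in the final step of [1]. You argue: the closure of $\Delta_N$ in the congruence completion is open, the congruence kernel is finite by CSP, hence $\Delta_N$ has finite index in $G(O_K)$. That implication is false. By the strong approximation theorem for Zariski-dense subgroups (Matthews--Vaserstein--Weisfeiler, Nori, Weisfeiler), \emph{every} finitely generated Zariski-dense subgroup of $G(O_K)$, for $G$ simply connected and absolutely almost simple, has open closure in the congruence completion. In particular a thin Zariski-dense free subgroup of $SL_3(\Z)$, which has infinite index, has open congruence closure, while $SL_3$ has trivial congruence kernel; so the two properties you verify are compatible with infinite index and can never yield the conclusion. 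Openness of the congruence closure carries no information about the index of a Zariski-dense subgroup.

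What the cited proofs actually establish is strictly stronger than any density statement: in the split and classical higher-rank cases, the Mennicke-symbol and commutator computations of \cite{Ba-Mi-Se}, \cite{Ti}, \cite{Va} show that $\Delta_N$ \emph{contains} a full congruence subgroup $G(O_K,\mathfrak{q})$ --- and once containment of a congruence subgroup is known, finite index is immediate with no appeal to CSP at all. The passage from ``congruence subgroups of the individual root groups'' (which your commutator calculus produces) to ``a congruence subgroup of $G$'' is precisely the hard, non-formal step of those papers, and it is the step your proposal replaces by the invalid closure argument. In the remaining case, $K$-rank one with $\infty$-rank $\geq 2$ --- which you rightly single out as the delicate one, and which is the case needed for the unitary groups in this paper --- the arguments of \cite{Ra} and \cite{Ve} likewise do not deduce the theorem from CSP via closures; they produce inside $\Delta_N$ subgroups to which normal-subgroup and rigidity theorems of Margulis can be applied. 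So the proposal identifies the right landmarks in the literature but the logical route connecting them does not close.
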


The above theorem  says that the integral points  of two {\it maximal}
opposing  unipotent  subgroups  (i.e.the  unipotent  radicals  of  two
minimal parabolic  $K$-subgroups) generate a finite  index subgroup in
$G(O_K)$  if   the  $\infty$-rank  is   at  least  two.   We   need  a
strengthening  of  this,  where  we  assume only  that  the  unipotent
radicals which are not  necessarily maximal, of two opposing parabolic
subgroups are involved. This is the following:

\begin{corollary} \label{criterion}  Suppose $G$ is  absolutely almost
simple and  $K-rank (G)\geq 2$.  Let  $P$ and $P^{-}$  be two opposite
parabolic  subgroups containing  a  maximal $K$-split  torus, and  $U,
U^{-}$ their unipotent radicals. \\

For any integer $N\geq 1$, the group $\Delta _N(P^{\pm})$ generated by
$N$-th  powers of  $U(O_K)$ and  $U^{-}(O_K)$  is of  finite index  in
$G(O_K)$.
\end{corollary}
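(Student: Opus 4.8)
I need to prove Corollary \ref{criterion}: given $G$ absolutely almost simple with $K$-rank $\geq 2$, and $P, P^-$ two *opposite* parabolic subgroups (not necessarily minimal) containing a maximal $K$-split torus $T$, with unipotent radicals $U, U^-$, the group $\Delta_N(P^\pm)$ generated by $N$-th powers of $U(O_K)$ and $U^-(O_K)$ has finite index in $G(O_K)$.

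**Key difference from the Theorem.** The Theorem \ref{bamise} handles *minimal* parabolics $P_0, P_0^-$ whose unipotent radicals $U_0, U_0^-$ are *maximal* unipotent subgroups. My $U, U^-$ are smaller (they're radicals of possibly non-minimal parabolics), so $U \subseteq U_0$ and $U^- \subseteq U_0^-$ for appropriately chosen minimal parabolics $P_0 \subseteq P$, $P_0^- \subseteq P^-$. So $\Delta_N(P^\pm) \subseteq \Delta_N$ in the notation of the Theorem. I can't just apply the Theorem to get finite index of the *larger* group — I need to show the *smaller* group $\Delta_N(P^\pm)$ already has finite index.

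**The strategy.** The natural idea is to show $\Delta_N(P^\pm)$ is an infinite normal subgroup of a finite-index subgroup and invoke part [2] of the Theorem. More precisely: let $M = P \cap P^-$ be the common Levi subgroup; $M$ is reductive, contains $T$, and normalizes both $U$ and $U^-$. Since $K$-rank$(G) \geq 2$ and $P$ is a proper parabolic, the semisimple part of $M$ still has positive $K$-rank, hence $M(O_K)$ contains plenty of elements (it's an arithmetic group in a group of positive $K$-rank). The plan is:

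First, pick a minimal $K$-parabolic $P_0 \subseteq P$ containing $T$, and its opposite $P_0^- \subseteq P^-$. Then $U \subseteq U_0$, $U^- \subseteq U_0^-$. Second, observe that $M(O_K)$ normalizes $U(O_K)$ and $U^-(O_K)$ (up to commensurability — being careful that $N$-th powers and integrality are preserved up to finite index, which needs an argument). So the group $H$ generated by $\Delta_N(P^\pm)$ together with $M(O_K)$ normalizes $\Delta_N(P^\pm)$, making it normal in $H$. Third, I'd try to show $H$ has finite index in $G(O_K)$: the point is that $U, M, U^-$ together generate $G$ (the "big cell" $U^- M U$ is Zariski-dense), so at the level of the ambient group the subgroup generated by $U, M, U^-$ is all of $G$, and one hopes the integral/arithmetic version gives finite index. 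Then since $\Delta_N(P^\pm)$ is infinite (it contains $N$-th powers of nontrivial unipotents — $U$ is nontrivial because $P$ is proper) and normal in a finite-index subgroup $H$, part [2] of Theorem \ref{bamise} would finish the job.

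**The main obstacle.** The crux is showing that the group generated by $U(O_K)$, $U^-(O_K)$, and $M(O_K)$ — or at least the normal closure of $\Delta_N(P^\pm)$ — has finite index in $G(O_K)$, so that [2] can be applied. The subtlety is that Theorem \ref{bamise}[2] requires the ambient finite-index group to be $G(O_K)$ itself and the subgroup to be normal in it; here $\Delta_N(P^\pm)$ is a priori only normal in $H = \langle \Delta_N(P^\pm), M(O_K)\rangle$, not in all of $G(O_K)$. So the cleanest route is: apply the hypothesis to $M$ itself. Since $K$-rank$(G) \geq 2$, the Levi $M$ has a semisimple part $M^{ss}$ of $K$-rank $\geq 1$, and by choosing $T$ maximal one can arrange $M^{ss}$ to contain a $K$-split torus; within $M$, the torus $T$ still acts on $\mathrm{Lie}(U)$ with nontrivial characters. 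The idea I'd pursue is to use $M(O_K)$-conjugates: the conjugates of the $N$-th powers in $U(O_K)$ under $M(O_K)$ generate (by an induction on the $K$-rank, or by directly invoking that $M$ acts with an open orbit structure on the root subgroups) a subgroup containing $N'$-th powers of $U_0(O_K)$ for some $N'$, and symmetrically for $U_0^-$; then Theorem \ref{bamise}[1] applies to give finite index. Making this conjugation-and-generation step precise — that $M(O_K)$-orbits of $U(O_K)$ fill out $U_0(O_K)$ up to finite index and commensurable powers — is where the real work lies, and it relies on the structure theory of parabolics (the characters of $T$ on $\mathrm{Lie}(U_0)$ that are not characters on $\mathrm{Lie}(U)$ are exactly the "interior" roots coming from $M$, which are recovered by commutators and $M(O_K)$-conjugation). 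I expect this reduction to the maximal-unipotent case to be the technical heart of the proof.
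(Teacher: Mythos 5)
Your overall skeleton is indeed the paper's: set $M=P\cap P^{-}$, note that $M(O_K)$ normalises $U(O_K)$ and $U^{-}(O_K)$ (in fact exactly, not just up to commensurability: for $m\in M(O_K)$ and $u\in U(O_K)$ the element $mum^{-1}$ lies in $U(K)$ and in $GL_n(O_K)$, hence in $U(O_K)$), so that $\Delta_N(P^{\pm})$ is normal in $H=\langle M(O_K),\Delta_N(P^{\pm})\rangle$; then show $H$ contains a group $\Delta_{N'}$ attached to the minimal parabolics and conclude by Theorem \ref{bamise}. The genuine gap is in the step you yourself flag as the ``technical heart'': producing power subgroups of $U_0(O_K)$ from $M(O_K)$-conjugates of $U(O_K)^N$. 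This mechanism cannot work. Precisely because $M$ normalises $U$, every such conjugate is again an $N$-th power from $U(O_K)$, namely $mu^Nm^{-1}=(mum^{-1})^N$; the set $U(O_K)^N$ is already stable under $M(O_K)$-conjugation, so your procedure generates only $\langle U(O_K)^N\rangle\subset U(O_K)$, which has infinite index in $U_0(O_K)$ as soon as $P$ is not minimal. Likewise, commutators of $M$ against $U$ stay inside $U$. No amount of conjugation can reach the ``interior'' root groups, i.e.\ those lying in $U_0\cap M$. (If instead you mean conjugates of all of $\Delta_N(P^{\pm})$, mixing $U$ and $U^{-}$, then proving that these generate power subgroups of $U_0(O_K)$ is essentially the corollary itself, and the argument becomes circular.)

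What you are missing is the simple observation that makes this step trivial rather than the heart of the proof: $U_0=(U_0\cap M)\,U$, and $U_0\cap M$ is a subgroup of $M$, so $(U_0\cap M)(O_K)\subset M(O_K)\subset H$ \emph{by construction} --- the interior unipotents come for free from the Levi, not from conjugating $U$. Hence $H$ contains $(U_0\cap M)(O_K)$ and $\langle U(O_K)^N\rangle$; since the decomposition $U_0=(U_0\cap M)U$ holds over the field $K$ (only up to finite index at the level of $O_K$-points, as the paper is careful to note in its bracketed remark), the group these two generate has finite index in $U_0(O_K)$, and therefore contains $\langle U_0(O_K)^{N'}\rangle$ for a suitable $N'$; symmetrically for $U_0^{-}$. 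Thus $H\supset\Delta_{N'}$, so $H$ has finite index in $G(O_K)$ by part [1] of Theorem \ref{bamise}, and $\Delta_N(P^{\pm})$ is an infinite normal subgroup of $H$; part [2] (equivalently the Margulis normal subgroup theorem, as the remark following Theorem \ref{bamise} notes) then gives that $\Delta_N(P^{\pm})$ has finite index in $G(O_K)$. This is exactly the paper's argument; your write-up founders only at the point where you replaced the containment $(U_0\cap M)(O_K)\subset M(O_K)$ by a conjugation argument that produces nothing outside $U$.
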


\begin{proof}  In  the  notation  above, let  $M=P\cap  P^{-}$.   Then
$M(O_K)$ normalises $U(O_K)$ and $U^{-}(O_K)$ and hence normalises the
group $\Delta _N(P)$.  Now the group generated by $M(O_K)$ and $\Delta
_N(P^{\pm})$     contains     $(U_0\cap     M)(O_K)$    and     $(U_0\cap
U)(O_K)^N=U(O_K)^N$;    the    decomposition    $P=MU$    shows    that
$P(O_K)=M(O_K)U(O_K)$   and   hence  $U_0(O_K)=(U_0\cap   M)(O_K)
U(O_K)$ [all  these equalities are true  only up to  finite index; the
decomposition $U_0=(U_0\cap  M)U$ of  algebraic groups is  defined over
the {\it  field} $K$, but not  over the integers  $O_K$.  This implies
that  there   are  finite  index   subgroups  $U_0'\subset  U_0(O_K)$,
$(U_0\cap M)'\subset  (U_0\cap M  )(O_K)$ and $U'\subset  U(O_K)$ such
that the  product decomposition  $U_0'=(U_0\cap M)'U'$ holds  for the
smaller groups]. \\

Therefore  the group  generated by  $M(O_K)$ and  $\Delta _N(P^{\pm})$
contains $N$-th  powers of elements of  $U_0(O_K)$ and $U^{-}_0(O_K)$.
Consequently, the group $\Delta _N(P ^{\pm})$ is normalised by $\Delta
_N$.  By  the second part of  the above theorem, $\Delta  _N(P)$ is of
finite index in $G(O_K)$.
\end{proof}

\begin{remark} The second part of Theorem \ref{bamise} is true for any
irreducible lattice in a real  semi-simple group of real rank at least
two (this is the normal subgroup theorem of Margulis).
\end{remark}

In the  next section, we will  state a special case  of this corollary
for certain unitary groups.

\subsection{Subgroups of products of higher rank groups}

In a  later section (section  \ref{proofssection}), we will  prove the
arithmeticity of the image of the braid group in a group of the form
$U(h)(\Z[q]/(q^d-1))$.   The  latter  is   a  product  of  the  groups
$U(h)(O_e)$ for  certain rings  of integers $O_e$.  To deal  with this
case,  we now  prove a  lemma  which is  a simple  consequence of  the
super-rigidity theorem of Margulis. \\

\begin{lemma}  \label{products} Suppose that  $K_e$ are  number fields
for each element $e$ in a  finite indexing set $X$. Suppose that $G_e$
is  an absolutely  almost simple  semi-simple algebraic  group defined
over $K_e$, and suppose that  $\infty -rank (G_e)\geq 2$ for all $e\in
X$. Suppose  that $\Gamma \subset  \prod G_e(O_e)$ is a  subgroup such
that the image  of its projection to each  $G_e(O_e)$ has finite index
in $G_e(O_e)$.   Assume in addition,  that either $K_e$ and  $K_f$ are
non-isomorphic or else, if $K_e$  and $K_f$ are isomorphic, the groups
$G_e$ and $G_f$ {\rm (}which may be thought of as groups defined over
the same field $K_e${\rm )} are not isomorphic over $K_e=K_f$. \\
 
Under these assumptions,  the group $\Gamma $ has  finite index in the
product $\prod _{e\in X} G_e(O_e)$.
\end{lemma}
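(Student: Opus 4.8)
The plan is to argue by induction on the cardinality of the indexing set $X$, reducing at each stage to an essentially two-block problem and then invoking the two deep inputs of Margulis's theory: the normal subgroup theorem (already recorded, in the Remark following Corollary \ref{criterion}, as valid for every irreducible lattice of $\R$-rank $\geq 2$) and super-rigidity. The base case $|X|=1$ is the hypothesis itself. For the inductive step, single out one index $e_0$, set $H=\prod_{e\neq e_0}G_e(O_e)$, and let $\Gamma'$ be the image of $\Gamma$ under the projection to $H$. The induction hypothesis applies to $\Gamma'$: its projection to each $G_e(O_e)$ with $e\neq e_0$ coincides with that of $\Gamma$ and is therefore of finite index, and the pairwise non-isomorphism assumption is inherited. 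Hence $\Gamma'$ has finite index in $H$, and it remains to analyse $\Gamma\subset H\times G_{e_0}(O_{e_0})$, where both coordinate projections have finite-index image.

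For this two-block analysis I would introduce the kernels $N=\{g\in G_{e_0}(O_{e_0}):(1,g)\in\Gamma\}$ and $M=\{h\in H:(h,1)\in\Gamma\}$; a standard conjugation check shows these are normal subgroups of the finite-index images $\Gamma_{e_0}\subset G_{e_0}(O_{e_0})$ and $\Gamma'\subset H$ respectively. Here one uses that $G_{e_0}(O_{e_0})$ is, via the restriction of scalars $R_{K_{e_0}/\Q}G_{e_0}$, an irreducible lattice in the real semisimple group $\prod_{v\mid\infty}G_{e_0}((K_{e_0})_v)$, whose real rank equals $\infty$-rank$(G_{e_0})\geq 2$. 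The normal subgroup theorem then forces $N$ to be either finite or of finite index in $\Gamma_{e_0}$. If $N$ has finite index, then $\Gamma$ contains $\{1\}\times N$ with $N$ of finite index in the second block and surjects onto the finite-index subgroup $\Gamma'$ of the first; a direct coset count (each fibre of the projection to $\Gamma'$ meets a full coset of $N$) shows $\Gamma$ has finite index in $H\times G_{e_0}(O_{e_0})$, and we are done.

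The remaining case $N$ finite is the genuinely hard one. Then the projection $\Gamma\to\Gamma'\subset H$ has finite kernel, so $\Gamma$ is abstractly commensurable with a finite-index subgroup of the product $H=\prod_{e\neq e_0}G_e(O_e)$; composing a virtual inverse of this projection with the projection onto $\Gamma_{e_0}$ produces a homomorphism from a finite-index subgroup of $H$ onto $\Gamma_{e_0}$ with finite kernel. Since the images of the distinct factors $G_f(O_f)$ commute inside the almost-simple target $G_{e_0}$, and the centralizer of a Zariski-dense subgroup of $G_{e_0}$ is finite, at most one factor can have infinite image; hence exactly one index $f_0\neq e_0$ yields a virtual isomorphism between finite-index subgroups of $G_{f_0}(O_{f_0})$ and $G_{e_0}(O_{e_0})$. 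By Margulis super-rigidity such an abstract virtual isomorphism of higher-rank arithmetic groups is induced by an isomorphism of the ambient $\Q$-groups $R_{K_{f_0}/\Q}G_{f_0}\cong R_{K_{e_0}/\Q}G_{e_0}$, which forces $K_{f_0}\cong K_{e_0}$ together with $G_{f_0}\cong G_{e_0}$ over this common field — contradicting the hypothesis. Thus $N$ finite cannot occur, and the induction closes.

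The main obstacle is precisely this last step: extracting an algebraic-group isomorphism from the abstract correlation between the two blocks. The non-isomorphism assumption on the pairs $(K_e,G_e)$ is exactly the leverage that super-rigidity converts into a contradiction; without it the conclusion genuinely fails, as diagonal embeddings of two copies of the same arithmetic group show. The two routine points to verify carefully are the irreducibility of each $G_e(O_e)$ as a lattice (so that the normal subgroup theorem applies factorwise) and the passage from a virtual isomorphism of lattices to an isomorphism of Weil restrictions, for which one cites the commensurator/isomorphism form of super-rigidity.
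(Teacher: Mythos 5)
Your proposal follows the paper's proof almost step for step: induction on the number of factors, reduction to a two-block (Goursat-type) analysis of the kernel $N$ of the projection away from one factor, Margulis's normal subgroup theorem to get the dichotomy on $N$, and super-rigidity plus the non-isomorphism hypothesis to kill the degenerate case. The coset count when $N$ has finite index is correct, the normality of $N$ in $\Gamma_{e_0}$ is correct, and the final super-rigidity contradiction is exactly the one in the paper (which, instead of your ``virtual'' bookkeeping, first replaces all $G_e(O_e)$ by torsion-free finite-index subgroups so that ``finite'' becomes ``trivial'').

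There is, however, one false assertion in your hard case. The composite of a virtual inverse of $\Gamma\to\Gamma'$ with the projection to the $e_0$-factor --- call it $\phi$ --- is \emph{not} a homomorphism with finite kernel: its kernel is, up to finite index, the group $M=\{h\in H:(h,1)\in\Gamma\}$, and in precisely the configurations this case is meant to exclude $M$ is infinite. For instance, with three factors and $\Gamma=\{(g,h,g):g\in\Lambda,\ h\in\Lambda_2\}$ (factors $1$ and $e_0$ isomorphic, violating the hypothesis), one has $N=\{1\}$ but $\ker\phi=M=\{1\}\times\Lambda_2$. Indeed your very next sentence --- that all factors but one have finite image --- already forces $\ker\phi$ to contain all but one factor up to finite index, so it is infinite whenever $|X|\geq 3$. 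Fortunately the false claim is not needed: what the argument requires is only that for the unique factor $f_0$ with infinite image, the restriction $\phi|_{\Lambda_{f_0}}$ has finite kernel. This follows from the normal subgroup theorem applied to $\ker\bigl(\phi|_{\Lambda_{f_0}}\bigr)$, a normal subgroup of the higher-rank lattice $\Lambda_{f_0}$: it is either finite or of finite index, and finite index would make the image finite, contrary to the choice of $f_0$. With this patch (using only tools you already invoke) your argument closes and coincides in substance with the paper's.
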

\begin{proof} Replacing the  arithmetic groups $G_e(O_e)$ by subgroups
of finite index,  we may assume that these are  torsion free and hence
that $\Gamma  $ is torsion free.   We prove the lemma  by induction on
the number  of factors.  Fix an element  $p\in X$.  By  induction, the
projection  of $\Gamma  $ in  the product  $\prod _{e\in  X,  e\neq p}
G_e(O_e)$ under the  projection map $pr: \prod _{e\in  X} G_e(O_e) \ra
\prod _{e\in X, e\neq p}  G_e(O_e)$ has finite index. Suppose $N_p$ is
kernel of  restriction of this projection  map to $  \Gamma$.  We will
show that the kernel $N_p$ cannot be trivial. \\

If  $N_p$ is  trivial, then  $\Gamma $  projects injectively  into the
product of the groups $G_e(O_e)$ with $e\neq p$: $\Gamma \subset \prod
_{e\in X,  e\neq p}  G_e(O_e)$ (and by  the induction  assumption, its
image  has  finite index).   Therefore,  $\Gamma  $  is an  arithmetic
subgroup of  the higher rank  lattice $\prod _{e\neq p}  G_e(O_e)$ and
has  a non-trivial  representation (projection  to the  $p$-th factor)
onto  a finite  index  subgroup of  the  arithmetic group  $G_p(O_p)$;
replacing the image of $\Gamma $ by a smaller subgroup of finite index
if necessary, we  may assume that the image of  $\Gamma$ in the ``away
from  $p$''  product  is  a  product  of  finite  index  subgroups  of
$G_e(O_e)$  (with $e\neq  p$).  The  Margulis normal  subgroup theorem
(applied to  the image of $\Gamma  $ in $G_e(O_e)$)  then implies that
only one of  the factors in this product  maps isomorphically onto its
image in  $G_p(O_p)$ and that the  other factors map  to the identity.
This contradicts the Margulis  super-rigidity (or Mostow rigidity): we
have an  isomorphism of a finite  index subgroup of  $G_e(O_e)$ with a
finite  index subgroup  of $G_p(O_p)$.   Such an  isomorphism,  by the
super-rigidity theorem,  is induced by  first an isomorphism  of $K_e$
with  $K_p$ and  an isomorphism  of $G_e$  with $G_p$  as  groups over
$K_e=K_p$.  This contradicts our assumptions and therefore, the kernel
$N_p$ cannot be trivial.\\

Since the kernel $N_p$ is  non-trivial, it is infinite since $\Gamma $
is assumed to be torsion free.  The conjugation action of $\Gamma $ on
the  $p$-th factor $G_p(O_p)$  factors through  its projection  to the
$p$-th  factor; but  the $p$-th  projection  map has  image of  finite
index, and hence $N_p$ is normalised  by a subgroup of finite index in
$G_p(O_p)$; by the Margulis  normal subgroup theorem, $N_p$ has finite
index  in $G_p(O_p)$;  therefore, $\Gamma  $ maps  onto a  subgroup of
finite  index in the  product of  the ``away  from $p$''  factors, and
intersects  the   $p$-th  factor  in  a  subgroup   of  finite  index.
Therefore, $\Gamma $ has finite index.
\end{proof}

\begin{remark} A related result is  proved in \cite{Gr-Lu} and also in
\cite{Looi2}. At the time of writing the present paper, we were unaware
of these papers. Moreover, the  proof here is different from the cited
papers.

\end{remark}

\section{Unitary groups}\label{unitary}

\begin{notation} Suppose  that $E/K$ is a  quadratic extension.  Write
$E=K\oplus K\sqrt{\alpha}$  for some  non-square element $\alpha  $ in
$K$ and  given $z\in E$, write  $z=x+y\sqrt{\alpha}$ accordingly. Then
$x,y$   are    called   the   ``real''    and   ``imaginary''   parts,
respectively.     Denote     by     $\overline{z}$     the     element
$x-\sqrt{\alpha}y$. \\

Given  $n\geq 1$,  the vector  space  $E^n$ may  be viewed  as a  $2n$
dimensional vector space over  $K$.  Suppose $h:E^n\times E^n\ra E$ is
a map which is $K$-bilinear such that for all $v,w\in E^n$ we have
\[h(v,w)={\overline h(w,v)}.\] Then $h$ is said to be a Hermitian form
with respect  to $E/K$.   By definition, the  elements of  the unitary
group  $U(h)$  satisfy the  property  that  they  commute with  scalar
multiplication by elements of $E$, viewed as $K$- linear endomorphisms
of $E^n$; further, they preserve  the real and imaginary parts of $h$.
These  properties characterise  the  elements of  $U(h)$  and in  this
manner, the group $U(h)$ may  be viewed as a $K$-algebraic subgroup of
$GL_K(K^{2n})=GL_{2n}(K)$. In particular,
\[U(h)(K)=\{g\in  GL_n(E)\subset GL_{2n}(K): h(gv,gw)=h(v,w)\}  \] for
all vectors $v,w  \in E^n $.  More generally, if  $A$ is a commutative
$K$ algebra, then
\[U(h)(A)=\{g\in  GL_n(E\otimes _K  A):  h(gv,gw)=h(v,w)\},\] for  all
elements $v,w$ of  the $E$- module $E^n\otimes _K  A$. \\ 

For example, if $K$  and $E$ are replaced by $\R$ and  $\C$ and $h$ is
the standard  Hermitian form on $\C^n$,  then the group  of {\it real}
points of the unitary group $U(h)$ are given by the compact group
\[U(h)(\R)=\{g\in GL_n(\C):~^t {\overline  g}g=1\},\] and the group of
{\it complex  } points of the  unitary group may easily  be seen (from
the above description) to be isomorphic to $GL_n(\C)$.

\end{notation}

\subsection{Rank  of a  unitary group}  

Let  $h_2:E^2\times E^2 \ra  E$ be  a Hermitian form with  respect to
$E/K$.  Suppose that the Hermitian form $h_2$ can be written as
\[h_2=\begin{pmatrix} 0 & 1 \\ 1 & 0\end{pmatrix}.\] That is, if $v,w$
are   column  vectors   in  $E^2$   with  entries   $v=(z_1,z_2)$  and
$w=(w_1,w_2)$,  then  $h_2(v,w)= z_1\overline{w_2}+z_2\overline{w_1}$.
Then $h$  is called  a {\it hyperbolic  form}.  The standard  basis of
$E_2$  may  be  written $v,v^*$,  and  the  form  $h_2$ is  such  that
$h(v,v)=h(v^*,v^*)=0$ and $h(v,v^*)=1$.\\

Suppose  $h$  is  a  non-degenerate  Hermitian form  on  $E^n$.   Then
$(E^n,h)$  can  be written  as  a  direct sum  of  $r$  copies of  the
hyperbolic form $(E^2,h_2)$ and a form $(E^{n-2r},h_0)$ which does not
represent  a  zero in  $E^{n-2r}$.   Then $h_0$  is  said  to be  {\it
anisotropic}. Let  $U(h)$ be the  unitary group of the  Hermitian form
$h$.   It  can be  shown  that  $K$-rank $(U(h))=r$. \\

With respect to this decomposition $h=h_2\oplus \cdots \oplus h_2\oplus h_0$,
write  the  standard  basis  of  the $j$-th  copy  of  $(E^2,h_2)$  as
$v_j,v_j^*$.  we may rearrange the basis of $E^n$ in the form
\[ v_1,v_2, \cdots,v_r, w_1,  \cdots w_s, v_r^*, \cdots v_2^*,v_1^*,\]
where $w_1,  \cdots, w_s$ is  a basis of $(E^{n-2r},h_0)$.   Let $Av_1
\subset W \subset V=E^n$, where $W$ is the $E$-submodule spanned by
\[v_1,\cdots, v_r, w_1, \cdots w_s, v_r^*, \cdots v^*_2.\]

In  the terminology  of  the  preceding section,  if  $n\geq 2$,  then
$SU(h)$ is  an absolutely almost  simple algebraic group  defined over
$K$; the subgroup $P$ of $SU(h)$ which preserves the flag $Ev_1\subset
W \subset V$ is a parabolic  subgroup defined over $K$. Let $U$ be the
subgroup of  $SU(h)$ which preserves  this flag and acts  trivially on
successive quotients. Then $U$ is the unipotent radical of $P$. \\

The partial  flag $Ev^*\subset Ev^*\oplus W \subset  V$ defines a
parabolic subgroup $P^{-}$  of $U(h)$, and $U^{-}$ is  the subgroup of
$P^{-}$ which acts trivially on the successive quotients of this flag.
This parabolic subgroup $P^{-}$ is opposite to $P$, and $U^{-}$ is its
unipotent  radical  opposed  to   $U$,  in  the  sense  of  subsection
\ref{opposite}. \\
   
\begin{corollary}  \label{unitarycorollary} Along  with  the preceding
notation  and hypotheses,  suppose  that $r \geq  2$.  Then the  group
generated  by the  $N$-th powers  of $U(O_K)$  and $U^{-}(O_K)$  is an
arithmetic subgroup of $SU(h)(O_K)$.  The same conclusion holds if the
$K$-rank of $SU(h)$ is $1$ but $\infty $ -rank $(SU(h))$ is $\geq 2$.

\end{corollary}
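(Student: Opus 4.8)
The plan is to deduce the statement as a direct application of Corollary \ref{criterion}, taking for the structural data $G=SU(h)$ together with the parabolics $P$, $P^-$ and their unipotent radicals $U$, $U^-$ exactly as constructed in the preceding discussion of this section. First I would record that $G=SU(h)$ is absolutely almost simple: since $r\geq 2$ we have $n\geq 2r\geq 4$, so the requirement $n\geq 2$ from the paragraph above is met. Next I would use the Witt decomposition $h=h_2\oplus\cdots\oplus h_2\oplus h_0$ to exhibit the diagonal torus $T$, acting by $v_j\mapsto t_jv_j$, $v_j^*\mapsto t_j^{-1}v_j^*$ and trivially on the anisotropic part $\langle w_1,\ldots,w_s\rangle$, as a $K$-split torus of dimension $r$; by the rank computation quoted earlier, $K$-rank$(SU(h))=r$, so $T$ is a maximal $K$-split torus.

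The key point is that for $r\geq 2$ the parabolics $P$ and $P^-$ are \emph{not} minimal: $P$ preserves only the partial flag $Ev_1\subset W\subset V$, so its unipotent radical $U$ is properly contained in the unipotent radical $U_0$ of a minimal parabolic. This is precisely the situation that Corollary \ref{criterion}, rather than Theorem \ref{bamise} itself, is designed to handle. Both $P$ and $P^-$ contain the torus $T$, since the subspaces cutting out their defining flags are spanned by weight vectors of $T$ and hence are $T$-stable; and the preceding discussion already records that $P^-$ is opposite to $P$ with $U^-$ opposed to $U$. Having verified that $G$ is absolutely almost simple, that $K$-rank$(G)=r\geq 2$, and that $P,P^-$ are opposite parabolics sharing the maximal $K$-split torus $T$, I would invoke Corollary \ref{criterion} to conclude that the group $\Delta_N(P^{\pm})$ generated by the $N$-th powers of $U(O_K)$ and $U^-(O_K)$ has finite index in $SU(h)(O_K)$; being of finite index, it is by definition an arithmetic subgroup.

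For the second assertion, where $K$-rank$(SU(h))=1$ but $\infty$-rank$(SU(h))\geq 2$, Corollary \ref{criterion} no longer applies, as it requires $K$-rank $\geq 2$. Here I would instead appeal to Theorem \ref{bamise}[1] directly. When $r=1$ the only hyperbolic pair is $v_1,v_1^*$, so the flag $Ev_1\subset W\subset V$ defines the unique proper $K$-parabolic up to conjugacy, which is therefore minimal; thus $P=P_0$ and $U=U_0$, and likewise $U^-=U_0^-$. Since $K$-rank$(G)\geq 1$ and $\infty$-rank$(G)\geq 2$, Theorem \ref{bamise}[1] shows that the group generated by $N$-th powers of $U_0(O_K)$ and $U_0^-(O_K)$ has finite index in $G(O_K)$, again yielding an arithmetic subgroup.

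The argument is essentially bookkeeping against the criteria already established. The only two points that require genuine care are (i) confirming that the constructed $P,P^-$ share a common maximal $K$-split torus, so that the hypotheses of Corollary \ref{criterion} hold verbatim, and (ii) recognizing in the rank-one case that $P$ is automatically minimal, which is what permits the switch from Corollary \ref{criterion} to Theorem \ref{bamise}[1]. I do not expect a serious obstacle, since all the substantive group-theoretic content, namely the Bass--Milnor--Serre, Tits, Vaserstein and Raghunathan generation results and the normal-subgroup strengthening, has already been packaged into Theorem \ref{bamise} and Corollary \ref{criterion}.
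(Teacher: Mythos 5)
Your proposal is correct and follows essentially the same route as the paper: the case $r\geq 2$ is deduced from Corollary \ref{criterion} using $K$-rank$(SU(h))=r$, and the rank-one case from Theorem \ref{bamise}[1]. The only difference is that you spell out details the paper leaves implicit (the common maximal $K$-split torus, and the observation that $P$ is automatically minimal when $r=1$, so that $U=U_0$), which is sound bookkeeping rather than a different argument.
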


\begin{proof}  We have already  noted that  $K$-rank $(SU(h))$  is the
number  of   hyperbolic  $2$-planes  in  the   decomposition  of  $h$.
Therefore, the $K$-rank of $SU(h)$  is $\geq 2$.  Then the corollary
follows from Corollary \ref{criterion} of the preceding section. 

The second part follows by Theorem \ref{bamise} (the group has higher
real rank but $\Q$-rank one).
\end{proof}

\subsection{The Heisenberg  group and the  group $P$} 

Assume now  that $(V,h)$  is an $n=m+1$  dimensional $E$  vector space
with  a {\it non-degenerate}  Hermitian form  $h$.  Assume  that there
exists a $E$-vector  subspace $X$ of $V$ of  codimension two such that
$h$ is  non-degenerate on $X$  and that there exist  isotropic vectors
$v,v^*$ in  $V$ which are  orthogonal to $X$, such  that $h(v,v^*)\neq
0$.  We have a partial flag
\[0\subset Ev  \subset Ev\oplus X \subset V =Ev\oplus  X\oplus Ev^* .\]
The subgroup of  the unitary group $U(h)$ of  $V$ which preserves this
flag  and  acts  trivially  on  successive  quotients  is  called  the
Heisenberg  group $H(X)$  of $X$.  We write  $P$ for  the  subgroup of
$U(h)$ which  preserves this  flag. It  is easily seen  that $P$  is a
maximal parabolic  subgroup of the  unitary group $U(h)$  defined over
$K$.  The Heisenberg group $H(X)$  is the unipotent radical of $P$ (we
sometimes denote $H(X)$ by $U$,  to be consistent with the notation of
the  preceding section).   Since we  have an  orthogonal decomposition
$V=X \oplus (Ev\oplus  Ev^*)$ with respect to the  Hermitian form $h$,
it follows that the unitary group $M=U(h_{\mid X})$ of the restriction
of  $h$ to  $X$ is  the  subgroup of  $U(h)$ which  fixes the  vectors
$v,v^*$.  We have $P=H(X)M=UM$ and  this gives a Levi decomposition of
the parabolic subgroup  $P$. Since the Hermitian form  $h$ is the same
on  $X$ and  $V$, we  sometimes write  $U(X)$ and  $U(V)$ in  place of
$U(h_{\mid X})$ and $U(h)$. \\

The direct sum $W=Ev\oplus X$ has the property that the Hermitian form
on $W$  is {\it  degenerate} with $W^{\perp}=  Ev$.  The  quotient map
$W\ra W/Av=X$  preserves the Hermitian structure on  both sides, since
$Ev$ is  orthogonal to  $W$.  This induces  a surjective  map $U(W)\ra
U(X)$ of unitary groups, with  kernel $U_0$, say. Consider the abelian
vector group  $X^*=Hom (X,Ev)$.  We may  view $X^*$ as  a vector space
over $K$ and hence as the group of $K$- rational points of a unipotent
algebraic   group  which   is  isomorphic   to   $U_0$:  $U_0(K)\simeq
X^*$. Hence we refer to $U_0$ as a vector group. We have a split short
exact sequence
\[0  \ra   U_0  \ra  U(W)  \ra   U(X)  \ra  1,\]  and   we  may  write
$U(W)=U(X)U_0$.   An element $\alpha  $ of  $U(W)$ may  accordingly be
written as a pair $\alpha = (g,x)$ with $g\in U(X)$ and $x\in U_0$. If
$g\in U(X)$, then $g$ gives a transformation on the vector group $U_0$
defined by $x\mapsto xg$ (the  transpose of $g$).  With this notation,
if $\beta =(h,y)\in U(W)$ then
\[\alpha  \beta =  (gh,  xh+y). \]  Therefore, $\alpha  ^{-1}=(g^{-1},
-xg^{-1})$. \\

Suppose that  $m\in U(X)\subset U(W)$  is of the form  $(\lambda, 0)$,
which is the multiplication by the scalar $\lambda $ on $X$ and acting
by $0$ on  $v$. Given $a,b \in U(h)$ denote  by $[a,b]$ the commutator
$aba^{-1}b^{-1}$.

\begin{lemma}   \label{degcommutator}  If   $\alpha  =(g,x)$   and  $m
=(\lambda,0)$ then the commutator $[\alpha, m]$ is of the form
\[ [\alpha, m]= (1, (1-  \lambda ^{-1})xg^{-1}).  \] In particular, if
$x\neq 0$ and $\lambda \neq 1$, then the commutator $[\alpha, m]$ is a
non-zero element of the vector space $U_0$.
\end{lemma}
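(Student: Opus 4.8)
The statement is a direct consequence of the explicit group law on $U(W)=U(X)U_0$ recorded just above, so the plan is simply to carry out the commutator $[\alpha,m]=\alpha m\alpha^{-1}m^{-1}$ in the coordinates $(g,x)$, keeping careful track of how the central scalar $\lambda$ moves past the transpose action of $U(X)$ on the vector group $U_0$. First I would assemble the three ingredients I need: the multiplication rule $(g,x)(h,y)=(gh,\,xh+y)$, the inverse formula $\alpha^{-1}=(g^{-1},-xg^{-1})$, and the observation that $m=(\lambda,0)$ has inverse $m^{-1}=(\lambda^{-1},0)$, where $\lambda$ is a \emph{central} scalar in $U(X)$ and so commutes with every $g\in U(X)$ and with the induced action on $U_0$.

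Then I would compute in stages. Multiplying $\alpha=(g,x)$ by $m=(\lambda,0)$ gives $\alpha m=(g\lambda,\,x\lambda)$. Multiplying on the right by $\alpha^{-1}=(g^{-1},-xg^{-1})$ and using centrality of $\lambda$ (so that $g\lambda g^{-1}=\lambda$) yields $\alpha m\alpha^{-1}=(\lambda,\; x\lambda g^{-1}-xg^{-1})$. Finally multiplying by $m^{-1}=(\lambda^{-1},0)$ collapses the first coordinate to $1$ and distributes $\lambda^{-1}$ over the second coordinate, producing $[\alpha,m]=(1,\;(x\lambda g^{-1}-xg^{-1})\lambda^{-1})$. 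Simplifying the second coordinate, and using once more that $\lambda$ is central and therefore passes through $g^{-1}$, gives $x\lambda g^{-1}\lambda^{-1}=xg^{-1}$ and $xg^{-1}\lambda^{-1}=\lambda^{-1}xg^{-1}$, so the second coordinate becomes $(1-\lambda^{-1})xg^{-1}$. This is exactly the asserted identity $[\alpha,m]=(1,(1-\lambda^{-1})xg^{-1})$.

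For the ``in particular'' clause I would note that $g^{-1}$ is invertible on $X$, so the transpose map $x\mapsto xg^{-1}$ is an isomorphism of the vector group $U_0$; hence $x\neq 0$ forces $xg^{-1}\neq 0$, while $\lambda\neq 1$ forces the scalar $1-\lambda^{-1}\neq 0$. Together these give that $[\alpha,m]$ is a nonzero element of $U_0$.

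The computation is otherwise routine, and the one genuinely delicate point — the step I would verify most carefully — is the bookkeeping of the scalar action in the second coordinate: one must be certain that the transpose (dual) action of the central scalar $\lambda\in U(X)$ on $U_0=Hom(X,Ev)$ really commutes past $g^{-1}$ and contributes the honest factor $\lambda^{-1}$ rather than $\overline{\lambda}^{-1}$ or some other conjugate. This is precisely where the $K$-linear description of the vector group $U_0$ from the discussion preceding the lemma is invoked, and it is what guarantees the clean factorization $(1-\lambda^{-1})$ and hence the sharp nonvanishing conclusion.
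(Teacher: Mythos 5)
Your computation is correct and is precisely the argument the paper intends: the paper states Lemma \ref{degcommutator} without proof, treating it as an immediate consequence of the group law $(g,x)(h,y)=(gh,\,xh+y)$ and the inverse formula $\alpha^{-1}=(g^{-1},-xg^{-1})$ recorded just before it, which is exactly the calculation you carry out (including the correct handling of the central scalar's transpose action on $U_0$ and the nonvanishing of $(1-\lambda^{-1})xg^{-1}$).
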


Recall that the abelian vector group  $X^*=Hom (X,Ev)$ may be viewed 
is a  vector space over  $K$ and hence  as the group of  $K$- rational
points of a unipotent  algebraic group $U_0$: $U_0(K)\simeq X^*$.  The
non-degenerate Hermitian form  $h_X$ on $X$ gives a  Hermitian form on
$X^*$ as well, which we again denote $h_X=h _{\mid X^*}$. \\

The group  $U=H(X)$ is non-abelian with  centre equal  to the 
one dimensional vector space ${\mathbb G}_a$ over $K$ and we have an 
exact sequence of unipotent $K$-algebraic groups
\[\{0\} \ra {\mathbb G}_a \ra U  \ra U_0 \ra \{1\}.\] This in turn
gives a short exact sequence
\[\{0\} \ra  K \ra U(K) \ra U_0(K)=X^*  \ra \{1\} ,\] at  the level of
$K$-rational  points (we  have  written $\{0\}$  and  $\{1\}$ for  the
trivial group  since one of them  is written additively  and the other
multiplicatively).  Moreover,  if $x,y\in U(K)$, and  $x^*,y^* $ their
images in $U_0(K)$,  then the commutator $[x,y]$, as  an element of $K$, 
is simply the imaginary part of $h(x^*,y^*)$. \\

Now, $P$ is a semi-direct product of $U$ with $M\simeq P/U$ and $M$ is
isomorphic  to  $U(X)$ as  in  the  preceding.   Moreover, the  centre
$Z={\mathbb G}_a$  of $U$ is normal  in $P$ and  secondly the quotient
$U/Z$ is isomorphic  to $U_0$.  We may write,  using the decomposition
$P=MU$, an element  $\alpha $ of $P$ in the  form $\alpha =(g,x)$ with
$g\in M$    and   $x\in   U$.     Let   $\lambda   \in
U(X)$ be a scalar transformation.

\begin{lemma} \label{nondegcommutator}

If $\alpha  = (g,x)$ with  $x\in U$, $x$  has non-zero image  in $U_0$
(i.e. does not  lie in the centre of $U$),  and $m=(\lambda ,0)\in M$,
then the  commutator $[\alpha, m]$  is a {\bf non-central}  element of
$U$. \\

\end{lemma}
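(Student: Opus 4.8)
The plan is to reduce this statement to the degenerate computation already carried out in Lemma~\ref{degcommutator}, by passing to the quotient of $P$ by the centre $Z=\mathbb{G}_a$ of the Heisenberg group $U$. The point is that $Z$ is characteristic in $U$ (being its centre) while $U$ is normal in $P$, so $Z$ is normal in $P$ and conjugation by $P$ descends to $P/Z$. First I would identify the quotient $P/Z$ with the group $U(W)=U(X)U_0$ of the degenerate picture: since $P=MU$ with $M\simeq U(X)$ and $U/Z\simeq U_0$, we get $P/Z=M\cdot(U/Z)=U(X)\ltimes U_0$, and the conjugation action of $M$ on $U_0$ agrees with the action of $U(X)$ on $X^{*}=U_0$ appearing in the degenerate discussion. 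Under this isomorphism an element $\alpha=(g,x)\in P$ maps to $(g,x^{*})$, where $x^{*}\in U_0$ is the image of $x$; by hypothesis $x$ is non-central in $U$, so $x^{*}\neq 0$, and the scalar element $m=(\lambda,0)\in M$ maps to $(\lambda,0)$.

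Next I would compute the image of the commutator $[\alpha,m]$ in $P/Z\simeq U(W)$. Since $\lambda$ is a scalar it is central in $M=U(X)$, so the image of $[\alpha,m]$ in $P/U=M$ is trivial and hence $[\alpha,m]\in U$, which is the first half of the assertion. Applying Lemma~\ref{degcommutator} to the images $(g,x^{*})$ and $(\lambda,0)$ in $U(W)$ then gives
\[ [\alpha,m]\equiv \bigl(1,(1-\lambda^{-1})\,x^{*}g^{-1}\bigr)\pmod{Z}. \]
Because $x^{*}\neq 0$, $\lambda\neq 1$, and $g^{-1}$ acts invertibly on the vector group $U_0$, the element $(1-\lambda^{-1})x^{*}g^{-1}$ is a non-zero element of $U_0$. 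Therefore the image of $[\alpha,m]$ in $U/Z=U_0$ is non-zero, which is precisely the statement that $[\alpha,m]$ is a non-central element of $U$.

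The only genuine point that needs care is the identification $P/Z\simeq U(W)$ together with the compatibility of the two $U(X)$-actions on $U_0=X^{*}$; once this is in place the conclusion is immediate from Lemma~\ref{degcommutator}, so the bulk of the argument is bookkeeping about the semidirect-product structure rather than any new computation. In particular I would record that the surjection $U(W)\to U(X)$ with kernel $U_0$ from the degenerate discussion is exactly the map induced on $P/Z$ by the Levi projection $P\to M$, which is what makes the formula of Lemma~\ref{degcommutator} applicable verbatim.
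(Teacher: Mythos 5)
Your proof is correct and takes essentially the same route as the paper: the paper also deduces the lemma immediately from Lemma~\ref{degcommutator}, observing that the image of $[\alpha,m]$ in $U_0=U/Z$ is already non-trivial by that computation. The only difference is that you spell out the identification $P/Z\simeq U(W)=U(X)U_0$ and the compatibility of the two $U(X)$-actions on $X^*$, which the paper leaves implicit.
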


The proof is immediate  from Lemma \ref{degcommutator} since the image
of the commutator $[\alpha ,m]$ in $U_0=U/Z$ is already non-trivial by
Lemma \ref{degcommutator}.

We now state another simple observation as a lemma.

\begin{lemma}  \label{finiteunipotent} If $U\ra  U_0$ is  the quotient
map, then a  subgroup $N\in U(O_K)$ has finite index  if and only if
its image $N_0$ has finite index in $U_0(O_K)$.
\end{lemma}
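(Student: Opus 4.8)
The statement to prove is Lemma \ref{finiteunipotent}: for the quotient map $U\ra U_0$ of unipotent $K$-algebraic groups, a subgroup $N\subset U(O_K)$ has finite index in $U(O_K)$ if and only if its image $N_0$ has finite index in $U_0(O_K)$. The key structural fact already available to me is the short exact sequence of $K$-rational points established just above the lemma, namely
\[
\{0\} \ra K \ra U(K) \ra U_0(K)=X^* \ra \{1\},
\]
whose centre $Z={\mathbb G}_a$ contributes the copy of $K$. Passing to integral points, this yields an exact sequence $0\ra Z(O_K)\ra U(O_K)\ra U_0(O_K)$, where $Z(O_K)\simeq O_K$ is the centre of $U(O_K)$ and is a finitely generated abelian group of rank $[K:\Q]$.

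**The forward direction.** First I would establish the easy implication: if $N$ has finite index in $U(O_K)$, then its image $N_0$ has finite index in $U_0(O_K)$, up to replacing $U_0(O_K)$ by the (finite-index) actual image of $U(O_K)$. This is immediate because the quotient map is surjective at the level of $K$-points and the image of a finite-index subgroup under a homomorphism has index at most that of the original subgroup; I would note that the image of $U(O_K)$ in $U_0(O_K)$ is itself of finite index, which follows from the fact that the sequence of algebraic groups is defined over $K$ and splits, so integrally the cokernel is finite. This direction needs essentially no real work.

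**The reverse direction — the crux.** The substantive implication is that finite index of $N_0$ in $U_0(O_K)$ forces finite index of $N$ in $U(O_K)$. The plan is to control the kernel. Let $N'=N\cap Z(O_K)$ be the intersection of $N$ with the centre. The index $[U(O_K):N]$ fits into the relation that it is bounded by $[U_0(O_K):N_0]\cdot[Z(O_K):N']$, so it suffices to show $N'$ has finite index in $Z(O_K)\simeq O_K$. Here I would use the commutator structure recorded just before the lemma: for $x,y\in U(K)$ with images $x^*,y^*$ in $U_0(K)$, the commutator $[x,y]\in Z(K)=K$ equals the imaginary part of the Hermitian pairing $h(x^*,y^*)$. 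Since $N_0$ has finite index in $U_0(O_K)=X^*(O_K)$, which spans $X^*$ over $K$, the imaginary parts of $h(x^*,y^*)$ for $x^*,y^*$ ranging over $N_0$ generate a finite-index (hence full-rank) subgroup of $O_K$ — the Hermitian form is non-degenerate, so these imaginary parts do not all vanish and in fact fill out a lattice. Lifting, these commutators lie in $N'$, so $N'$ has finite index in $Z(O_K)$, completing the argument.

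**Where the difficulty lies.** The genuinely delicate point is verifying that the commutators $[x,y]$, as $x,y$ lift elements of the finite-index subgroup $N_0$, actually generate a finite-index subgroup of $Z(O_K)\simeq O_K$ rather than something thin or trivial. This reduces to the claim that the image of the map $(x^*,y^*)\mapsto \mathrm{Im}\,h(x^*,y^*)$ from $N_0\times N_0$ into $O_K$ has full rank, which uses both the non-degeneracy of $h_X$ on $X$ and the fact that $N_0$ spans $X^*$ over $K$ (being finite-index in the lattice $X^*(O_K)$). I would phrase this as: the $O_K$-bilinear alternating form given by $\mathrm{Im}\,h$ on the lattice $N_0$ has image of finite index in $O_K$ because $h$ is non-degenerate, so its radical over $K$ is trivial. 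This is the one step where I expect to spend care; everything else is formal manipulation of the exact sequence and indices.
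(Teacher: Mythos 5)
Your proof is correct, and it is essentially the paper's own argument: the paper states Lemma \ref{finiteunipotent} with no proof at all (``another simple observation''), but the two structural facts it records immediately beforehand --- the central extension $\{0\}\ra K \ra U(K)\ra U_0(K)=X^*\ra \{1\}$ and the identification of the commutator $[x,y]$ with the imaginary part of $h(x^*,y^*)$ --- are clearly recorded precisely so that this lemma follows, and your reverse direction (lift elements of $N_0$ into $N$, observe that their commutators land in $N\cap Z$, and use non-degeneracy of $h$ plus the fact that $N_0$ is a full lattice in $X^*$ to conclude these central values span a subgroup commensurable with $Z(O_K)$) is exactly that intended argument, worked out in full.

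One local correction: in the easy direction you justify the finiteness of the index of the image of $U(O_K)$ in $U_0(O_K)$ by asserting that the sequence of algebraic groups splits. It does not: a group-theoretic splitting would make $U\cong {\mathbb G}_a\times U_0$ abelian, whereas $U$ is the Heisenberg group, a non-trivial central extension. The fact you need is nevertheless true and is already available in the paper: apply Lemma \ref{arithmeticimage} (the image of an arithmetic group under a morphism of algebraic groups is arithmetic) to the surjection $U\ra U_0$, after restriction of scalars to $\Q$. With that substitution the argument is complete; the remaining imprecisions (the commutator values and $Z(K)\cap U(O_K)$ may only be commensurable with, rather than contained in, $Z(O_K)\simeq O_K$) are harmless, since every assertion is made only up to finite index and all the groups involved are finitely generated of the same rank.
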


\subsection{An  inductive step  for integral  unitary groups}  

In  this subsection,  we prove  a  result which  will be  used in  the
inductive proof of Theorem \ref{mainth}.  This says that a subgroup of
the integral  unitary group which  contains finite index  subgroups of
smaller integral unitary groups, has finite index.

\begin{notation} Let $V=(V,h)$ be a nondegenerate Hermitian space over
$E$ such  that $K-rank  (U(h))\geq 2$. Let  $W,W'$ be  codimension one
subspaces  such that  the  restriction of  $h$  to $W$,  $W'$ and  the
intersection $W\cap  W'$ are all  non-degenerate.  We denote  by $U_V$
the unitary  group $U(h)$, and define  similarly $U_W,U_{W'}, U_{W\cap
W'}$. If  $Y$ is one of the  subspaces $W,W'$ and $W\cap  W'$, then by
the  non-degeneracy assumption,  we have  an  orthogonal decomposition
$V=Y\oplus Y^{\perp}$.  Hence $U_Y$ may  be thought of as the subgroup
of $U_V$ which acts trivially on $Y^{\perp}$. \\

Suppose that  $\Gamma \subset  U_V(O_K)$ is a  subgroup such  that its
intersection with  $U_W(O_K)$ has finite index in  $U_W(O_K)$ and such
that  its   intersection  with  $U_{W'}(O_K)$  has   finite  index  in
$U_{W'}(O_K)$.  Assume further  that  $W\cap W'$  contains a  non-zero
isotropic vector $v$.
\end{notation}

\begin{lemma} \label{inductivelemma} With  the preceding notation {\rm
(}and under the assumption  that $K$-$rank (U(h))\geq 2${\rm )}, The
group $\Gamma $ has finite index in $U_V(O_K)$.
\end{lemma}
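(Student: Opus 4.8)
The plan is to produce, inside $\Gamma$, finite-index subgroups of the integral points of the unipotent radicals of two \emph{opposite} parabolic subgroups of $SU(h)$, and then to invoke Corollary \ref{criterion}. The whole argument is driven by a single linear-algebra identity, and the nilpotent (Heisenberg) groups of the previous subsection supply the unipotents.

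First I would fix the geometry. Since $h$ is non-degenerate on $W\cap W'$ and $v\in W\cap W'$ is a non-zero isotropic vector, I can choose an isotropic $v^*\in W\cap W'$ with $h(v,v^*)\neq 0$, giving a hyperbolic plane $\langle v,v^*\rangle\subset W\cap W'$. Put $X_V=\langle v,v^*\rangle^{\perp}$ in $V$, and $X_W=X_V\cap W$, $X_{W'}=X_V\cap W'$. Because $W,W'$ are distinct hyperplanes both containing $\langle v,v^*\rangle$, the spaces $X_W,X_{W'}$ are distinct hyperplanes of $X_V$, whence
\[ X_W+X_{W'}=X_V. \]
Next I would use the Heisenberg parabolic attached to the isotropic line $Ev$, as in the subsection on the Heisenberg group: let $P$ be the stabilizer in $SU(h)$ of the flag $0\subset Ev\subset Ev\oplus X_V\subset V$, with unipotent radical $U=H(X_V)$, and let $H(X_W)\subset U_W$ and $H(X_{W'})\subset U_{W'}$ be the corresponding Heisenberg groups. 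Extending by the identity on the one-dimensional orthogonal complements $W^{\perp},W'^{\perp}\subset X_V$, I would check that $H(X_W)$ and $H(X_{W'})$ are subgroups of $H(X_V)=U$: the generator with parameter $\xi\in X_W$ sends $x_0\mapsto x_0+h(x_0,\xi)v$, which is trivial on $W^{\perp}$ (as $W^{\perp}\perp X_W$) and has image exactly $\xi$ under the identification $U_0=U/Z\cong X_V$ of the abelianization.

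Consequently the images of $H(X_W)$ and $H(X_{W'})$ in $U_0$ are lattices spanning $X_W$ and $X_{W'}$ respectively, and by the displayed identity they together span $U_0$ over $K$; their sum is therefore a finite-index subgroup of $U_0(O_K)$, and Lemma \ref{finiteunipotent} upgrades this to the statement that the group generated by $H(X_W)(O_K)$ and $H(X_{W'})(O_K)$ has finite index in $U(O_K)$. Now, since $\Gamma\cap U_W(O_K)$ has finite index in $U_W(O_K)$, it meets $H(X_W)(O_K)$ in a finite-index subgroup, and similarly for $W'$; hence $\Gamma\cap U(O_K)$ has finite index in $U(O_K)$. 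Repeating the whole construction with the opposite isotropic line $Ev^*$ (which also lies in $W\cap W'$) produces the opposite parabolic $P^-$ with unipotent radical $U^-$, and shows $\Gamma\cap U^-(O_K)$ has finite index in $U^-(O_K)$. Finally, any finite-index subgroup of $U(O_K)$ or $U^-(O_K)$ contains the $N$-th power of every element for a suitable common $N$ (take $N$ to be the index of a normal core), so $\Gamma$ contains the group $\Delta_N(P^{\pm})$ generated by the $N$-th powers of $U(O_K)$ and $U^-(O_K)$. Since $SU(h)$ is absolutely almost simple of $K$-rank $\geq 2$ and $P,P^-$ are opposite parabolics sharing a maximal $K$-split torus (contained in their common Levi), Corollary \ref{criterion} gives that $\Delta_N(P^{\pm})$ has finite index in $SU(h)(O_K)$; hence $\Gamma$ has finite index in $SU(h)(O_K)$ and therefore in $U_V(O_K)$.

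The main obstacle is the third step: identifying precisely how $H(X_W)$ sits inside $H(X_V)$ and reading off its image in the abelianization $U_0$. Everything hinges on the clean fact that the two codimension-one restrictions contribute complementary hyperplanes $X_W,X_{W'}$ with $X_W+X_{W'}=X_V$; verifying that the extended Heisenberg elements act trivially on the orthogonal complements, and that their images in $U_0$ fill out lattices in $X_W$ and $X_{W'}$ (so that Lemma \ref{finiteunipotent} applies), is the only place needing genuine care. The remaining steps are bookkeeping together with the cited arithmeticity criterion.
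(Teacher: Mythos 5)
Your proposal is correct and is essentially the paper's own proof: the same hyperbolic pair $v,v^*\in W\cap W'$, the same orthogonal decomposition $V=(Ev\oplus Ev^*)\oplus X$ with its Heisenberg unipotent radical, the same key point that the two codimension-one subspaces $X\cap W$ and $X\cap W'$ span $X$ (so the two smaller Heisenberg groups generate $H(X)(O_K)$ up to finite index, via Lemma \ref{finiteunipotent}), and the same conclusion via the opposite parabolic and Corollary \ref{criterion}. The only difference is that you spell out details the paper compresses, such as verifying that $H(X\cap W)$ embeds in $H(X)$ and taking $N$ to be the index of a normal core before invoking the corollary.
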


\begin{proof} Since $W\cap  W'$ has codimension two in  $V$ and $h$ is
non-degenerate on $W\cap W'$ it follows  that $V$ is the direct sum of
$W\cap W'$ and its  orthogonal complement $(W\cap W')^{\perp}$ in $V$;
the orthogonal  complement is also a non-degenerate  unitary space (of
dimension two); by assumption, $W\cap W'$ contains an isotropic vector
$v$, say.   The non-degeneracy of $h$  on $W\cap W'$  shows that there
exists  a vector  $v^*\in W\cap  W'$  such that  $h(v,v^*)\neq 0$;  by
replacing $v^*$ by $v^*+\lambda v$  for a suitable scalar $\lambda$ if
necessary, we may  assume that $v^*\in W\cap W'$  is also an isotropic
vector.     Write    $V=(Ev+    Ev^*)\oplus    X$,    an    orthogonal
decomposition. Consider the filtration
\[0\subset Ev  \subset E\oplus X  \subset Ev\oplus X \oplus  Ev^*=V. \]
Denote  the  corresponding integral  Heisenberg  group (the  unipotent
subgroup of  $U(V)$ which preserves the  flag and acts trivially  on
successive  quotients), by $H_V=H (X)(O_K)$.  Define similarly,  the smaller
Heisenberg groups $H_W=H(X\cap W)(O_K)$ and $H_{W'}=H(X\cap W')(O_K)$. \\

By assumption, $H_W\cap \Gamma$ has finite index in $H_W$; similarly
for $H_{W'}$.  The two Heisenberg  groups generate $H_V$ up  to finite
index, since two distinct vector subspaces of codimension one span the
whole  space.   We thus  find  that the intersection of $\Gamma $ with 
the  integral  unipotent radical  of  a
parabolic $K$ subgroup $H_V=H(X)(O_K)$ has finite index in $H_V$. \\

Similarly, we  find a  finite index subgroup  of an  opposite integral
unipotent  radical  which lies  in  $\Gamma$;  therefore, $\Gamma$  is
arithmetic, by the Corollary \ref{criterion} to Theorem \ref{bamise}.
\end{proof}

\section{Properties of  the Burau representations 
$\rho  _n$ and $\rho _n(d)$} \label{burausection}

\subsection{Notation}   As   observed   in   the   introduction,   the
representation  $\rho_n:B_{n+1}\ra  GL_n(\Z[q,q^{-1}])$ preserves  the
Hermitian form

\[ h=h_n=  \begin{pmatrix} \frac{(q+1)^2}{q} &  -(1+q) & 0 &  \cdots &
\cdots\\ -(1+q^{-1}) & \frac{(q+1)^2}{q} & -(q+1) & \cdots & \cdots \\
0  & -(1+q^{-1}) &  \frac{(q+1)^2}{q} &  -(q+1) &  \cdots \\  \cdots &
\cdots & \cdots & \cdots & \cdots \end{pmatrix}. \]
(note that $h_{kk}$  is fixed under the involution $q\mapsto q^{-1}$). 
Denote by $D_n$ the determinant of $h_n$.

\begin{lemma} \label{squier} The determinant $D_n$ of the matrix $h_n$
is
\[D_n= det(h)=(\frac{q+1}{q})^n(\frac{q^{n+1}-1}{q-1}).\]
\end{lemma}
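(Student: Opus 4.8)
The plan is to compute the determinant $D_n = \det(h_n)$ of the tridiagonal Hermitian matrix by exploiting its tridiagonal structure, which yields a clean recurrence. Expanding $\det(h_n)$ along the last row, I obtain a three-term recurrence of the form
\[
D_n = \frac{(q+1)^2}{q}\, D_{n-1} - (q+1)(1+q^{-1})\, D_{n-2},
\]
where the off-diagonal product $(q+1)(1+q^{-1}) = (q+1)^2/q$ happens to coincide with the diagonal entry. Writing $c = (q+1)^2/q$, this simplifies to the homogeneous linear recurrence $D_n = c\,D_{n-1} - c\,D_{n-2}$.

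Next I would solve this recurrence explicitly. The proposed closed form is $D_n = \left(\frac{q+1}{q}\right)^n \frac{q^{n+1}-1}{q-1}$, so the cleanest route is to verify directly that this formula satisfies the recurrence together with the correct initial data. I would check the base cases $D_1 = (q+1)^2/q$ and $D_2$ (computed directly from the $2\times 2$ minor), and then substitute the candidate formula into $D_n = c\,D_{n-1} - c\,D_{n-2}$ and confirm the identity holds as an element of $\Z[q,q^{-1}]$. Writing $a_n = \frac{q^{n+1}-1}{q-1} = 1 + q + \cdots + q^n$, the factor $\left(\frac{q+1}{q}\right)^n$ matches the expected growth from powers of $c = \frac{(q+1)^2}{q} = \frac{q+1}{q}\cdot(q+1)$, so after factoring out the geometric prefactor the recurrence reduces to a polynomial identity among the $a_n$, namely a relation of the form $q\,a_n = (q+1)\,a_{n-1} - a_{n-2}$ (up to the bookkeeping introduced by the prefactor), which is elementary to verify.

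Alternatively, and perhaps more transparently, I would diagonalize the recurrence via its characteristic equation $t^2 - ct + c = 0$; since $c = (q+1)^2/q$, one finds that the roots are precisely $q+1$ and $(q+1)/q$, because their sum is $(q+1) + (q+1)/q = (q+1)^2/q = c$ and their product is $(q+1)^2/q = c$. This immediately gives $D_n = A(q+1)^n + B\big((q+1)/q\big)^n$ for constants $A,B$ determined by the initial conditions, and the resulting expression rearranges into the stated product formula after combining the two geometric terms.

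The only real obstacle is bookkeeping: one must be careful that the off-diagonal entries $-(q+1)$ and $-(1+q^{-1})$ are complex conjugate under $q \mapsto q^{-1}$ (so their product is the norm $(q+1)(1+q^{-1}) = (q+1)^2/q$ that feeds the cofactor expansion) and that all manipulations stay within the Laurent ring $\Z[q,q^{-1}]$ rather than introducing spurious denominators. Once the recurrence coefficients are pinned down correctly, both the direct verification and the characteristic-root approach are routine, so I expect no genuine difficulty beyond keeping track of these conjugate off-diagonal factors.
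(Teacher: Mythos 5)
Your proposal is correct and follows essentially the same route as the paper: the paper likewise expands the tridiagonal determinant (along the first row rather than the last) to obtain the recurrence $D_{n+1}=\frac{(q+1)^2}{q}D_n-(1+q)(1+q^{-1})D_{n-1}$ and then concludes by an easy induction, which is your primary verification route. One small correction: after factoring out the prefactor $\left(\frac{q+1}{q}\right)^n$, the polynomial identity to check is $a_n=(q+1)a_{n-1}-q\,a_{n-2}$ with $a_n=1+q+\cdots+q^n$, not $q\,a_n=(q+1)a_{n-1}-a_{n-2}$, though this is exactly the routine bookkeeping you flagged.
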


\begin{proof} Expanding  the determinant of $h_{n+1}$  using the first
row, we see that
\[D_{n+1}= \frac{(q+1)^2}{q}D_n-(1+q)(1+q^{-1})D_{n-1}.\]  Now an easy
induction implies the Lemma.
\end{proof}

\subsection{Non-degeneracy of the representation $(A^n, \rho _n(d))$ }

Consider the ring $A=R/(\Phi _d(q))\subset E=\Q [q]/(\Phi _d(q))$ ($q$
evaluated at {\it  all} the primitive root of unity).   We then have a
corresponding  Hermitian  form  on  $A^n$  which we  again  denote by
$h_n=h_n(d)$.  We will say that  elements $v_1, \cdots ,v_k$ are basis
elements of  $A^n$ if  there exist vectors  $v_{k+1}, \cdots,  v_n$ in
$A^n$ such  that $A^n$ is the  free module generated  by $v_1, \cdots,
v_n$.  We  have a  representation $\rho _n(d):B_{n+1}\ra  U(h) \subset
GL_n(A)$ as before. \\

\begin{lemma}  \label{isotropic}  Let  $A$  be as  in  the  preceding.
Assume that $d\geq 3$, so that $q\neq \pm 1$. Then

{\rm  [1]}   The  Hermitian form   $h_n$  on  the  module   $A^n$  is
non-degenerate  if and only  if $n$  is not  congruent to  $-1$ modulo
$d$. \\

{\rm [2]} If $d$ divides $n+1$, define $k$ by the formula $n= kd-1$.  
Then the vector
\[v=     e_1+(\frac{q^2-1}{q-1})e_2+     (\frac{q^3-1}{q-1})e_3+\cdots
(\frac{q^{kd-1}-1}{q-1})e_{kd-1}  \] is a  basis vector  and generates
the null  space of  the degenerate Hermitian  form $h$,  in $V_A=A^n$.
Moreover,  on   the  quotient  module   $V_A/Av$,  the  form   $h$  is
non-degenerate.  \\

{\rm [3]}  The vector  $v$ is  fixed by the  elements $s_j$  under the
representation  $\rho  _n(d)$.   We  therefore  get  the  ``quotient''
representation  $\overline{\rho _n(d)}$ of  $B_{n+1}$ on  the quotient
$V_A/Av$  which  again  preserves  the  {\rm  (}non-degenerate{\rm  )}
Hermitian form $h$ on $V_A/Av$.

\end{lemma}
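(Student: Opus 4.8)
The plan is to handle the three parts in order, with a single elementary ingredient---the three-term recurrence satisfied by the coefficients $c_j=\frac{q^j-1}{q-1}=1+q+\cdots+q^{j-1}$ of $v$ (with the convention $c_0=0$)---doing most of the work for parts [2] and [3].

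For [1], I would read off non-degeneracy directly from the determinant formula of Lemma \ref{squier}. In $E=\Q[q]/(\Phi_d(q))$ the element $q$ is a primitive $d$-th root of unity, so $q\neq\pm 1$ (here $d\geq 3$ is used) and the factor $\left(\frac{q+1}{q}\right)^n$ is a unit; the remaining factor $\frac{q^{n+1}-1}{q-1}$ has nonzero denominator and vanishes precisely when $q^{n+1}=1$, i.e. when $d\mid n+1$. Hence $D_n=\det(h_n)$ vanishes in $E$ exactly when $n\equiv -1 \pmod d$, and $h_n$ is non-degenerate if and only if $n\not\equiv -1\pmod d$.

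For [2], the first step is to record the identity $(q+1)c_j=q\,c_{j-1}+c_{j+1}$, a one-line manipulation of geometric sums. Using $1+q^{-1}=\frac{q+1}{q}$, I would then compute the pairings $h(v,e_i)$ from the tridiagonal shape of $h$: for every interior index $i$ and for $i=1$ the recurrence forces $h(v,e_i)=0$ identically, while for the last index $i=n=kd-1$ the surviving term is a multiple of $c_{n+1}=\frac{q^{n+1}-1}{q-1}$, which is zero because $n+1=kd$ and $q^d=1$ in $A$. Thus $v$ lies in the radical of $h$. Since the coefficient of $e_1$ in $v$ is $1$, the vector $v$ is unimodular, so $\{v,e_2,\dots,e_n\}$ is an $A$-basis; this shows $v$ is a basis vector and identifies the integral radical as $A^n\cap Ev=Av$. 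To see the radical is no larger, note that deleting the first row and column gives the Gram matrix of $h$ on $\mathrm{span}(e_2,\dots,e_n)$, which coincides with $h_{n-1}$; its determinant $D_{n-1}=\left(\frac{q+1}{q}\right)^{n-1}\frac{q^n-1}{q-1}$ is nonzero since $q^n=q^{-1}\neq 1$. In the basis $(v,e_2,\dots,e_n)$ the Gram matrix is therefore block-diagonal, equal to $0\oplus h_{n-1}$, so the form induced on $V_A/Av\cong \mathrm{span}(e_2,\dots,e_n)$ is exactly $h_{n-1}$, and is non-degenerate.

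For [3], the same recurrence finishes the argument. Applying $T_j=\rho_n(d)(s_j)$ to $v$, only the coefficient of $e_j$ can change, and the equation $T_j(v)=v$ reduces exactly to $(q+1)c_j=q\,c_{j-1}+c_{j+1}$ for interior $j$ and for $j=1$; for $j=n$ it reduces instead to $c_{n+1}=0$, which again holds because $d\mid n+1$. Hence every generator fixes $v$, so all of $B_{n+1}$ fixes $v$, the line $Av$ is invariant, and $\rho_n(d)$ descends to $\overline{\rho_n(d)}$ on $V_A/Av$; since $h$ is $\rho_n(d)$-invariant and descends to the non-degenerate form of part [2], the quotient representation preserves that Hermitian form. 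The one place where the hypothesis $d\mid n+1$ is genuinely used---and the only nontrivial point in an otherwise formal computation---is the boundary term at the last index, namely the vanishing of $c_{n+1}$ via $q^{kd}=1$; the remaining subtlety is passing from the field-theoretic statements to their integral counterparts (primitivity of $v$, the radical being exactly $Av$, and freeness of the quotient), which follow from the unit coefficient of $e_1$ and from $A$ being a domain.
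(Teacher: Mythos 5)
Your proposal is correct and takes essentially the same route as the paper: part [1] is read off from the determinant formula of Lemma \ref{squier}, and part [2] is the paper's ``explicit computation'' of orthogonality (your three-term recurrence $(q+1)c_j=q\,c_{j-1}+c_{j+1}$ makes it explicit) together with identifying the induced form on $V_A/Av$ with $h_{kd-2}$, non-degenerate by part [1]. The only difference is in part [3]: the paper deduces $s_k(v)=v$ directly from the orthogonality already proved in [2], via the reflection formula $s_k(x)=x-\frac{q\,h(x,e_k)}{q+1}\,e_k$ (a formula it reuses later, e.g.\ in Lemma \ref{fullgroup}), whereas you re-verify $T_j(v)=v$ from the matrix entries and the same recurrence; both are one-line checks and equally valid.
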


\begin{proof} Part  [1] is obvious because of  Lemma \ref{squier}: the
determinant  of $h$  on $A^n$  vanishes if  and only  if $q^{n+1}-1=0$
where now  $q$ is  a primitive $d$-th  root of unity;  therefore, this
happens if and only if $n+1$ is divisible by $d$. \\

In  part  [2],  the  orthogonality  of  $v$  with  the  vectors  $e_j$
($j=1,2,\cdots,  kd-1$) follows  from  an explicit  computation; it  is
clear that $v,e_2,  \cdots, e_{kd-1}$ form a basis  of $A^{kd-1}$. The
matrix of the Hermitian form $h_{kd-1}$ on the quotient $A^{kd-1}/Av$
with  respect  to  the   basis  $e_2,  \cdots,  e_{kd-1}$  is  clearly
$h_{kd-2}$; this is non-degenerate by part [1]. Thus part [2] follows.

Since $v$ is orthogonal to $e_k$, it follows that $s_k$ fixes $v$: for
any $x\in A ^n$, we have
\[s_k(x)= x-\frac{qh(x,e_k)}{q+1}e_k,\] as can be easily checked
by  evaluating both  sides  of  this equality  on  the basis  elements
$e_l$. Therefore [3] follows.

\end{proof}

\begin{lemma}  \label{fullgroup}   Suppose  $n$  is   an  integer  not
congruent to $-1$  modulo $d$ and as before let  $\Gamma _n(d)$ be the
image of $B_{n+1}$ under the  representation $\rho _n(d)$ on $A^n$. If
$0\neq W \subset V_A=A^n$ is  a subgroup which is stable under $\Gamma
_n(d)$, then  $W$ contains $\lambda (A^n)$ for  some non-zero $\lambda
\in A$; in particular, $W$ has finite index in $A^n$. \\

If $F\supset  A$ is  a field containing  the integral domain  $A$, and
$W_F\subset F^n$ is a non-zero $F$-subspace stable under the action of
$\rho _n(d)$, then $W_F=F^n$.  
\end{lemma}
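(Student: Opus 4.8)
The plan is to prove irreducibility of the Burau representation $\rho_n(d)$ over a field $F \supset A$ by exhibiting enough reflections and using the explicit formula for the action of the generators $s_j$. The key observation, already recorded in the proof of Lemma~\ref{isotropic}, is that each generator acts as a pseudo-reflection via
\[
s_k(x) = x - \frac{q\,h(x,e_k)}{q+1}\,e_k,
\]
so that $s_k$ fixes the hyperplane $e_k^{\perp}$ pointwise and scales the line $Ae_k$. The reflection vectors are precisely the standard basis vectors $e_1,\dots,e_n$. First I would take a nonzero $F$-subspace $W_F \subset F^n$ stable under $\Gamma_n(d)$, and argue that $W_F$ must contain at least one of the reflection lines $Fe_k$: this is a standard fact for groups generated by reflections, namely that an invariant subspace of a reflection group is spanned by its intersections with the reflection hyperplanes and the reflection lines it meets, so some $e_k \in W_F$ (after possibly replacing $W_F$ by a nonzero vector moved off a hyperplane by some $s_k$).

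Once I know some $e_k \in W_F$, the next step is to propagate: I would show that applying the generators to $e_k$ sweeps out all the other basis vectors. Concretely, the formulae
\[
T_j(e_{j-1}) = e_{j-1} + q e_j, \qquad T_j(e_{j+1}) = e_{j+1} + e_j
\]
show that if $e_j \in W_F$ then $T_{j-1}(e_j) - e_j$ and $T_{j+1}(e_j) - e_j$ produce nonzero multiples of $e_{j-1}$ and $e_{j+1}$ respectively (here one uses $q \neq 0$, which holds since $q$ is a root of unity). Iterating this ``neighbour-hopping'' from the single index $k$ reaches every index $1,\dots,n$, so $W_F$ contains every $e_j$ and hence $W_F = F^n$. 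This is the heart of the argument and it is essentially a connectivity statement on the Dynkin-type chain $1 - 2 - \cdots - n$: the off-diagonal coupling between consecutive basis vectors is exactly what makes the representation indecomposable over a field.

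The main obstacle I anticipate is the first step, establishing that the invariant subspace $W_F$ necessarily meets one of the reflection lines rather than sitting entirely inside a union of reflection hyperplanes. For a single reflection this is clear, but I must rule out the degenerate possibility that every generator fixes $W_F$ pointwise, which would force $W_F \subset \bigcap_k e_k^{\perp}$; since $h$ is non-degenerate on $F^n$ under the hypothesis that $n \not\equiv -1 \pmod d$ (Lemma~\ref{isotropic}[1]), this common intersection is zero, contradicting $W_F \neq 0$. Therefore some $s_k$ acts nontrivially on $W_F$, producing a vector of the form $x - s_k(x)$ which is a nonzero multiple of $e_k$, placing $e_k \in W_F$ and launching the propagation. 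I would also want to deduce the integral statement from the field statement: the first paragraph of the lemma follows because a $\Gamma_n(d)$-stable subgroup $W$ spans $F^n$ over the fraction field $F$ of $A$, so choosing an $A$-basis of $F^n$ inside $W$ and clearing denominators yields a nonzero $\lambda \in A$ with $\lambda A^n \subset W$, giving finite index.
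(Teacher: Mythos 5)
Your proof of the second (field) statement is correct and is essentially the paper's own argument: non-degeneracy of $h$ (Lemma \ref{isotropic}, part [1]) forces some $x\in W_F$ with $h(x,e_k)\neq 0$, the reflection formula $s_k(x)=x-\frac{qh(x,e_k)}{q+1}e_k$ then places a nonzero multiple of $e_k$ in $W_F$, and the neighbour-hopping via $s_{k\pm 1}$ sweeps out all the basis lines. The genuine gap is in your deduction of the first (integral) statement from it. You claim that choosing a basis $w_1,\dots,w_n$ of $F^n$ inside $W$ and clearing denominators yields $\lambda A^n\subset W$. This step fails because $W$ is only an additive \emph{subgroup} of $A^n$ stable under $\Gamma_n(d)$; it is not assumed to be an $A$-submodule, so $Aw_1+\cdots+Aw_n$ need not lie in $W$ --- all you can place inside $W$ is the $\Z$-span $\Z w_1+\cdots+\Z w_n$. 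A rank count shows this cannot suffice: $\lambda A^n$ is a free $\Z$-module of rank $n\varphi(d)$, whereas $\Z w_1+\cdots+\Z w_n$ has $\Z$-rank $n$, and $\varphi(d)\geq 2$ as soon as $d\geq 3$. So your construction only exhibits a subgroup of infinite index inside $A^n$, and the first assertion of the lemma does not follow.

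The missing idea --- and the way the paper handles it --- is that the braid group action itself supplies the $A$-module structure along the lines $Ae_k$: if a nonzero multiple $v_k=\mu e_k$ lies in $W$, then $s_k^{\pm 1}(v_k)=(-q)^{\pm 1}v_k\in W$, and since $W$ is closed under addition and under the group generated by $s_k$, the entire $\Z[q,q^{-1}]$-module $Av_k$ lies in $W$. With this observation the paper runs your same propagation \emph{integrally}: from $Av_k\subset W$, applying $s_{k\pm 1}$ produces nonzero multiples $v_l$ of every $e_l$, hence $Av_l\subset W$ for all $l$, and therefore $W\supset A\mu_1e_1+\cdots+A\mu_ne_n\supset \lambda A^n$ with $\lambda=\mu_1\cdots\mu_n\neq 0$ (a product of nonzero elements of the domain $A$). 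In short, the integral statement must be proved directly by the same sweep, using the scalar action of the $s_k^{\pm 1}$ to manufacture $A$-multiples; it cannot be deduced from the field statement by a general ``clear denominators'' lattice argument.
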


\begin{proof} Since $n\neq -1~(mod  ~d)$, $h$ is non-degenerate; since
$W\neq 0$,  $W$ has a nonzero vector  $x$, and there exists  a $k$ such
that $h(x,e_k)\neq 0$. The formula
\[s_k(x)=x-\frac{qh(x,e_k)}{q+1}e_k,\]  shows   that  a  nonzero
multiple $v_k$  of $e_k$ lies in  $W$; applying $s_k^{\pm  1}$ to this
multiple of  $e_k$ shows  that $(-q)^{\pm 1}v_k\in  W$ where $q$  is a
primitive root of unity; hence the $\Z[q,q^{-1}]$- module generated by
$v_k$ lies in $W$. Thus $Av_k\subset W$. \\

Applying  the elements  $s_{k\pm 1}$  to  $Av_k$ it  follows that  $W$
contains a non-zero multiple $v_l$ of $e_l$ for every $l\leq n$ and by
the preceding  paragraph, $Av_l\subset W$ for each  $l$. Therefore the
first part of the Lemma follows. \\

The second part  of the Lemma is proved in the  same way, by replacing
$A$ by the field $F$.

\end{proof}

\begin{proposition}  \label{burauirred} [1]  The  representation $\rho
_n(d)$ is irreducible unless $n\equiv -1 (mod~d)$. \\

[2] If $n\equiv  -1~(mod ~d)$ then $\rho _n(d)$  contains a vector $v$
fixed   under   $B_{n+1}$  and   the   quotient   $A^n/Av$  yields   a
representation  $\overline{\rho _n(d)}$  of $B_{n+1}$.   Restricted to
the  subgroup  of  $B_{n+1}$   generated  by  $s_2,\cdots,  s_n$,  the
representation   $\overline{\rho_n(d)}$   is   equivalent   to   $\rho
_{n-1}(d)$.  Hence $\overline{\rho_n(d)}$ is irreducible.\\

[3] Suppose  $n$ is  not congruent to  $-1$ modulo $d$.   Consider the
representation  $\rho  _n(d)$   over  an  algebraically  closed  field
${\overline E}$  containing the  ring $S=\Z[q+q^{-1}]$, and  denote it
$\rho _n({\overline E})$.  Then  $\rho_n ({\overline E})$ is irreducible
over ${\overline E}$.
\end{proposition}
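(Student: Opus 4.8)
The plan is to handle the three parts in sequence, getting [1] and the substance of [3] straight out of Lemma \ref{fullgroup}, and deducing [2] by restricting to a sub-braid-group. The whole argument rests on the reflection formula $s_k(x) = x - \frac{q\,h(x,e_k)}{q+1}\,e_k$, which makes each $T_k$ a pseudo-reflection fixing the hyperplane $\{x : h(x,e_k)=0\}$ and scaling $e_k$ by $-q$.

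For part [1], with $n\not\equiv -1\pmod d$ the form $h$ is non-degenerate by Lemma \ref{isotropic}[1], so I would apply the field version of Lemma \ref{fullgroup} with $F=E$ the fraction field of $A$: any nonzero $\rho_n(d)$-stable $E$-subspace of $E^n$ equals $E^n$. That is exactly irreducibility over $E$. The engine is again the reflection formula: non-degeneracy yields some $k$ with $h(x,e_k)\neq 0$, hence a nonzero multiple of $e_k$ in the stable subspace, and applying $s_{k\pm 1}$ sweeps out every $e_l$.

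For part [2], assume $n\equiv -1\pmod d$. Lemma \ref{isotropic} supplies a vector $v$ fixed by all $s_j$, with null space $Av$ and $h$ non-degenerate on the quotient, giving $\overline{\rho_n(d)}$. I would take $\{\overline{e_2},\dots,\overline{e_n}\}$ as a basis of $A^n/Av$ and compare the action of $T_2,\dots,T_n$ on this quotient with the defining Burau formulae; the relabelling $s_j\mapsto s_{j-1}$ identifies $\langle s_2,\dots,s_n\rangle$ with $B_n$ and exhibits the restriction $\overline{\rho_n(d)}\big|_{\langle s_2,\dots,s_n\rangle}$ as $\rho_{n-1}(d)$. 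Since $n\equiv -1$ forces $n-1\equiv -2\not\equiv -1\pmod d$ (here $d\geq 3$ is used), part [1] makes $\rho_{n-1}(d)$ irreducible; a representation whose restriction to a subgroup is already irreducible is itself irreducible, which gives [2].

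For part [3] the point to appreciate is that irreducibility over $E$ does not formally entail absolute irreducibility: over the braid group there is no Maschke theorem, so one cannot route the argument through the commutant and semisimplicity. Instead I would exploit that the \emph{field} version of Lemma \ref{fullgroup} was established for \emph{every} field $F\supseteq A$, and merely check that $\overline{E}$ is such a field. Since $\overline{E}$ is algebraically closed of characteristic zero and contains $t=q+q^{-1}\in S$, the element $q$, a root of $X^2-tX+1$, lies in $\overline{E}$; as $X^2-tX+1$ divides $\Phi_d$ over $\Q(t)$, this $q$ is a primitive $d$-th root of unity. Hence $A=\Z[q]/(\Phi_d(q))$ embeds in $\overline{E}$ (send $q$ to this root), and $h$ stays non-degenerate because its determinant $D_n\neq 0$ persists. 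The field version of Lemma \ref{fullgroup} with $F=\overline{E}$ then shows every nonzero $\rho_n(d)$-stable $\overline{E}$-subspace of $\overline{E}^n$ is all of $\overline{E}^n$, i.e.\ $\rho_n(\overline{E})$ is irreducible. The only genuine content beyond Lemma \ref{fullgroup} is that an algebraically closed field containing $S$ automatically contains $A$ (equivalently, that the matrices $T_j$, whose entries involve $q$ and not merely $q+q^{-1}$, can be written down over $\overline{E}$ at all); I expect this embedding check to be the mild obstacle, and it is the whole reason the hypothesis is phrased in terms of $S$.
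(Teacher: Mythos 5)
Your proposal is correct and takes essentially the same route as the paper: parts [1] and [3] are read off from the second (field) statement of Lemma \ref{fullgroup}, and part [2] is the relabelling $s_j\mapsto s_{j-1}$, $e_j\mapsto e_{j-1}$ identifying $\overline{\rho_n(d)}$ restricted to $\langle s_2,\dots,s_n\rangle$ with $\rho_{n-1}(d)$, which is irreducible by [1] since $n-1\not\equiv -1 \pmod d$ (the paper dismisses this as ``obvious''). Your extra check in [3] that $A$ embeds in $\overline{E}$ via a root of $X^2-tX+1$ dividing $\Phi_d$ is a detail the paper leaves implicit, and it is exactly the right point to verify.
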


\begin{proof} Part [1] of  the Proposition (the irreducibility) is the
second part of Lemma \ref{fullgroup}. \\

The second part is obvious. \\

The  third  part   again  follows  from  the  second   part  of  Lemma
\ref{fullgroup}: $\rho _n({\overline E})$ is irreducible.
\end{proof}

Let  $n\equiv  -1~(mod ~d)$.   Then  the  Hermitian space  $V=(A^n,h)$
contains  the  span $Av$  of  $v$  as  its orthogonal  complement  and
$\overline{V}=V/Av$ has a non-degenerate Hermitian form $\overline{h}$
induced by  $h$.  Let $U(h)$  and $U\overline{h})$ denote  the unitary
groups. We have then a split exact sequence of groups
\[\{0\}\ra  Hom_A(\overline{V},  Av)  \ra  U(h)\ra  U(\overline{V})\ra
\{1\}.\] As before, $\{0\}$ is  the trivial group  written additively
and $\{1\}$ is the trivial group written multiplicatively. \\

Here,  the dual  $W$ of  $\overline{V}$  may be  identified with  $Hom
_A(\overline{V},Av)$.   Thus $U(h)$  may be  written as  a semi-direct
product  $U(h)=U(\overline{h})W$ with  $W$  normal in  $U(h)$ and  the
conjugation action of $U(\overline{h})$ on $W$ is just the dual of the
standard representation of $U(\overline{h})$. 

\subsection{A central element of the braid group}

Consider the  braid group $B_{n+1}$ with  generators $s_1,s_2, \cdots,
s_n$. consider the element
\[\Delta=  (s_1s_2\cdots s_n)(s_1\cdots s_{n-1})\cdots(s_1s_2)(s_1),\]
of $B_{n+1}$.  It  is elementary to check that for  every $k\leq n$ we
have: $\Delta s_{n+1-k}=s_k\Delta$. Hence $\Delta ^2$ is in the centre
of $B_{n+1}$. \\

If $E$ is an algebraically closed field containing $\Z[q+q^{-1}]$, and
$n$ is not congruent to $-1$  mod $d$, then by part [3] of Proposition
\ref{burauirred}, $\rho  _n(E)$ is irreducible;  therefore, by Schur's
Lemma, the  element $\Delta ^2$ acts  by a scalar  $\delta$, say. Since
the  determinant of  each $s_i$  is $-q$  in the  representation $\rho
_n(E)$,   we   see   that   the   determinant  of   $\Delta   ^2$   is
$(-q)^{n(n+1)}=\delta ^n$. Therefore,  $(\delta /(-q)^{n+1})^n=1$. On the
other  hand, the  entries  of  $\rho _n(E)  (\Delta  ^2)$ are  Laurent
polynomials  in  $q$ with  integral  coefficients.   Hence the  scalar
$\delta  $ lies  in  the ring  $\Z[q,q^{-1}]$.   Therefore, the  above
equation means that $\rho _n(\Delta ^2)=\delta =(\pm 1)q^{n+1}$.

\begin{lemma}  \label{scalar}  If $d\geq  3$  and  $n=kd-2$, then  the
element $\Delta  ^2$ acts by  a scalar $\lambda  \neq 1$ on  the space
$A^n$ of the representation $\rho _n(d)$.
\end{lemma}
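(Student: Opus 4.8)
The plan is to reduce everything to the scalar computation already carried out in the paragraph immediately preceding the lemma. First I would observe that for $n=kd-2$ we have $n\equiv -2 \pmod d$, and since $d\geq 3$ this is \emph{not} congruent to $-1$ modulo $d$ (equality would force $d\mid 1$). Hence the hypothesis of part [3] of Proposition \ref{burauirred} is met: the representation $\rho_n$ over an algebraically closed field $E\supseteq \Z[q+q^{-1}]$ (with $q$ a fixed primitive $d$-th root of unity) is irreducible, so Schur's lemma applies to the central element $\Delta^2$.

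By the discussion just before the lemma, irreducibility together with the determinant computation shows that $\Delta^2$ acts on the representation space by a scalar $\delta=(\pm 1)q^{n+1}$. Since the matrix $\rho_n(d)(\Delta^2)$ is defined over $A$ and is already scalar over the larger field $E$, it acts as the same scalar $\delta\in A$ on $A^n$; so it suffices to prove $\delta\neq 1$.

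Next I would substitute $n+1=kd-1$ and use $q^d=1$: since $q^{kd}=(q^d)^k=1$, we obtain $q^{n+1}=q^{kd-1}=q^{-1}$, whence $\delta=\pm q^{-1}$. Finally, $\delta=1$ would force $q^{-1}=1$ or $q^{-1}=-1$, i.e. $q=1$ or $q=-1$; but $q$ is a primitive $d$-th root of unity with $d\geq 3$, so $q\neq 1$ and $q\neq -1$. Therefore $\delta\neq 1$, which is the assertion with $\lambda=\delta$.

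There is no real obstacle in the calculation; the only point that genuinely uses $d\geq 3$ is the exclusion of $q=-1$ (for $d=2$ one has $q=-1$ and the scalar can degenerate to $1$), which is precisely the place flagged in the introduction. It is worth emphasizing that we never need to determine the ambiguous sign in $\delta=\pm q^{-1}$, since both $q^{-1}$ and $-q^{-1}$ differ from $1$ under the standing hypothesis $d\geq 3$.
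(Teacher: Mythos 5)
Your proposal is correct and follows essentially the same route as the paper's own proof: both invoke the irreducibility from part [3] of Proposition \ref{burauirred} together with the Schur/determinant discussion preceding the lemma to conclude that $\Delta^2$ acts by $\delta=\pm q^{n+1}=\pm q^{kd-1}=\pm q^{-1}$, and then rule out $\delta=1$ using $d\geq 3$. The only cosmetic difference is the final exclusion: the paper squares the relation (if $\delta=1$ then $q^{-2}=(\pm 1)^2=1$, impossible for a primitive $d$-th root of unity with $d\geq 3$), while you split into the cases $q^{-1}=\pm 1$; the two arguments are equivalent.
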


\begin{proof}  By  Proposition  \ref{burauirred},  the  representation
$\rho  _{kd-2}$ is  irreducible. By  the conclusion  of  the preceding
paragraph,  the   central  element  $\Delta  ^2$  acts   by  a  scalar
$\lambda=\pm  q^{n+1}=\pm q^{kd-1}  =\pm q^{-1}$  since $q$  is  now a
$d$-th  root of  unity. This shows that  $\lambda  \neq 1$  since,
otherwise, we get
\[q^{-2}=(\pm 1)^2=1.\] This is impossible  since $d\geq 3$ and $q$ is
a primitive $d$-th root of unity. Hence $\lambda \neq 1$. 
\end{proof}

\begin{remark} It can  be shown, by examining the  action of the Braid
Group (\cite{Bir})  on the  free group on  $n+1$ generators,  that the
scalar in  question is actually $q^{n+1}$  (this is a  special case of
Proposition \ref{Gassnerproposition} of the  present paper which we do
not prove). In particular, if  $n=kd-2$, and $q$ is a primitive $d$-th
root of unity, then $q^{n+1}= q^{-1}\neq 1$, even when $d=2$.
\end{remark}

We  have already  mentioned  in  the introduction  that  the proof  of
Theorem  \ref{mainth}   is  by  induction.   We   will  prove  Theorem
\ref{mainth} directly when $n\geq 2d$  is congruent to $-1$ (mod $d$).
Then we  will prove  that induction may  be applied, which  will prove
Theorem \ref{mainth} for all $n\geq  2d$.  To achieve this, we need to
exhibit sufficiently many unipotent elements in the image of $B_{n+1}$
under the representation $\rho _n(d)$ for the values $n\equiv -1 ~{\rm
or}~ 0$ (mod $d$).  This will be done in the next two subsections.

\subsection{Constructing  unipotent elements  when  $n=kd-1$.}  Assume
that $n=kd-1$.   Since  $dim X=kd-2$,  Lemma  \ref{scalar} and  Proposition
\ref{burauirred} imply that the square of the element
\[\Delta    '=   (s_2s_3\cdots    s_n)(s_2s_3\cdots   s_{n-1})(\cdots)
(s_2s_3)(s_2)\] lies in  the subgroup generated by $s_2, \cdots,
s_n$ in  $B_{n+1}$, and acts by a  non-trivial scalar $\lambda  \neq 1$ on
$\overline{\rho _{kd-1}(d)}$. \\

Denote the element $[s_1, \Delta '^2]$ of $B_{n+1}$ by $u$ (as before,
we denote by $[g,h]$ the  commutator $ghg^{-1}h^{-1}$ of $g$ and $h$).
The image of this element $u$ under the reducible representation $\rho
_{kd-1}(d)$, lies in the vector group $A^n$ where $U(h)=Hom _A(A^n,Av)
\rtimes U(\overline{h})$ is a semi-direct product as before.

\begin{proposition}   \label{unipotent}  Let   $n=kd-1$.    Under  the
representation $\rho _n(d)$  the commutator element $u=[s_1,(\Delta ')
^2]$ has  the property  that its image  $u'=\rho_{kd-1}(u)$ is  a {\bf
non-trivial} unipotent element in the vector part $A^{n-1}$ of $U(h)$.
\end{proposition}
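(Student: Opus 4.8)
The plan is to analyze the commutator $u = [s_1, (\Delta')^2]$ under the reducible representation $\rho_{kd-1}(d)$ using the semidirect product decomposition $U(h) = \mathrm{Hom}_A(\overline{V}, Av) \rtimes U(\overline{h})$ established just before this subsection. Recall $(\Delta')^2$ lies in the subgroup generated by $s_2, \ldots, s_n$ and acts by the nontrivial scalar $\lambda \neq 1$ on the quotient representation $\overline{\rho_{kd-1}(d)}$ (by Lemma \ref{scalar} applied to the relevant dimension and Proposition \ref{burauirred}). So in the Levi quotient $U(\overline{h})$, the image of $(\Delta')^2$ is exactly the scalar transformation $\lambda$. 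The key structural observation is that Lemma \ref{degcommutator} tells us precisely how a commutator with a scalar Levi element behaves: writing $m = (\lambda, 0)$ for the scalar and any $\alpha = (g, x)$, the commutator $[\alpha, m]$ lands in the vector part and equals $(1, (1 - \lambda^{-1})xg^{-1})$.

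\textbf{Main steps.} First I would identify the image of $s_1$ in the semidirect product: write $\rho_{kd-1}(s_1) = (g, x)$ with $g \in U(\overline{h})$ its action on the quotient and $x$ its component in the vector group $\mathrm{Hom}_A(\overline{V}, Av) = A^{n-1}$. Next, since $(\Delta')^2$ is generated by $s_2, \ldots, s_n$, I would observe that these generators fix the null vector $v$ (by Lemma \ref{isotropic}[3]) and hence lie in the parabolic subgroup preserving $Av$; moreover $(\Delta')^2$ acts as the scalar $\lambda$ on the quotient, so its image is precisely $m = (\lambda, 0)$ in the Levi — it has trivial vector component because it is a genuine scalar transformation on $\overline{V}$ lifting to the scalar on $V$ fixing $v$. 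Then I would apply Lemma \ref{degcommutator} directly with this $\alpha = \rho(s_1)$ and $m = (\lambda, 0)$, yielding
\[
u' = [\rho(s_1), \rho((\Delta')^2)] = (1, (1 - \lambda^{-1}) x g^{-1}).
\]
This is automatically unipotent since it lies in the vector (unipotent radical) part. The final and crucial step is nontriviality: by the second sentence of Lemma \ref{degcommutator}, the commutator is nonzero precisely when $x \neq 0$ and $\lambda \neq 1$. We already know $\lambda \neq 1$ from Lemma \ref{scalar}, so the whole proposition reduces to verifying that the vector component $x$ of $\rho(s_1)$ is nonzero.

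\textbf{The main obstacle.} The hard part will be showing $x \neq 0$, i.e.\ that $s_1$ does \emph{not} lie in the Levi subgroup $U(\overline{h})$ but genuinely moves the null vector $v$ into the picture. This is where the explicit Burau formulas must be used: from the action $T_1(e_1) = -q e_1$ and $T_1(e_2) = e_2 + e_1$, together with the explicit form of $v = e_1 + (\tfrac{q^2-1}{q-1})e_2 + \cdots$ from Lemma \ref{isotropic}[2], I would compute $\rho(s_1)(v)$ and check it differs from $v$ (unlike $s_2, \ldots, s_n$, the generator $s_1$ is \emph{not} orthogonal to the relevant $e_k$'s and does not fix $v$). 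Equivalently, since $v$ spans the null space $Av = W^\perp$ and the vector part $x$ measures the failure of $\rho(s_1)$ to descend cleanly to the quotient, I expect $x \neq 0$ to follow from the fact that $s_1$ acts nontrivially on $Av$ relative to the quotient action — concretely by exhibiting one basis element $e_\ell$ of $\overline{V}$ whose $\rho(s_1)$-image has a nonzero $Av$-component. Once this single explicit nonvanishing computation is in hand, the proposition follows immediately from Lemma \ref{degcommutator} combined with $\lambda \neq 1$.
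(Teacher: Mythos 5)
Your overall route is the same as the paper's: use the semidirect decomposition of $U(h)$ into the vector group $\mathrm{Hom}_A(\overline{V},Av)$ and the Levi $U(\overline{h})$, note via Proposition \ref{burauirred} and Lemma \ref{scalar} that $\rho_n((\Delta')^2)$ is the element $m=(\lambda,0)$ with $\lambda\neq 1$, apply Lemma \ref{degcommutator} to $\alpha=\rho_n(s_1)=(g,x)$, and reduce the whole proposition to showing that the vector component $x$ is nonzero. Up to that reduction your proposal is correct and matches the paper.

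The gap is in how you verify $x\neq 0$. Your primary check --- ``compute $\rho(s_1)(v)$ and check it differs from $v$'' --- would fail, because $s_1$ \emph{does} fix $v$: the vector $v$ spans the radical of the degenerate form $h$, so $h(v,e_k)=0$ for \emph{every} $k$, including $k=1$, and the reflection formula $s_k(x)=x-\frac{qh(x,e_k)}{q+1}e_k$ then gives $s_1(v)=v$. Indeed Lemma \ref{isotropic}[3], which you cite only for $s_2,\dots,s_n$, asserts exactly this for all the $s_j$; it is what makes the quotient representation well defined and forces the entire image of $\rho_n(d)$ into the parabolic. For the same reason your supporting heuristics are wrong: every element of the image ``descends cleanly to the quotient'' and acts trivially on $Av$, so neither of these can detect $x$. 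What $x$ actually measures is the failure of $\rho_n(s_1)$ to preserve the chosen complement $X=\mathrm{span}(e_2,\dots,e_n)$ of $Av$ (the generators $s_2,\dots,s_n$ do preserve $X$, which is why their images lie in $M=U(X)$, but $s_1$ need not). This is your fallback formulation, and it is the one the paper executes: $\rho_n(s_1)(e_2)=e_2+e_1$, and substituting $e_1=v-\sum_{k=2}^{n}\frac{q^k-1}{q-1}e_k$ gives
\[
e_2+e_1=v-qe_2-\sum_{k=3}^{n}\frac{q^k-1}{q-1}e_k\notin X,
\]
so the image of $e_2$ has nonzero $Av$-component and hence $x\neq 0$. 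The correct computation is thus half-present in your proposal, but it is not ``equivalent'' to your first check: the first check is vacuous (it always returns equality), and relying on it would collapse the proof at its crux.
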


\begin{proof} This  follows from Lemma  \ref{degcommutator}. Note that
$(A^n,h)$ is a degenerate Hermitian form with an isotropic vector $v$
such that the  quotient $V/Av$ is non-degenerate; moreover,  if $X$ is
the $A$-span of the vectors $e_2,  e_3, \cdots, e_n$ then $A^n$ is the
direct sum  $Av\oplus X$.  Hence  $(V,h)$ satisfies the  hypotheses of
Lemma \ref{degcommutator}. \\

Since  $Dim (X)=kd-2$,  Lemma  \ref{scalar} implies  that the  element
$m=\rho _n((\Delta ')^2)$ acts by a scalar $\lambda \neq 1$ on $X$ and
acts trivially on $v$; the  element $p=\rho _n(s_1)$ takes the element
$e_2\in X$ into the element
\[e_2+e_1= -qe_2+  v-\sum _{k=3}^n (\frac{q^k-1}{q-1})e_k  \notin X,\]
which  shows that  $p$ does  not  lie in  $M=U(X)$.  therefore,  Lemma
\ref{degcommutator}  applies, and  $u'$ is  a non-zero  vector  in the
vector part of  $U(h)$, and in particular, is  a non-trivial unipotent
element.
\end{proof}

\begin{proposition}  \label{unitaryunipotent}  Suppose that  $n=kd-1$.
The subgroup $U_n$ of $B_{n+1}$ generated by the conjugates $huh^{-1}$
where  $h$  runs  through  the  elements of  the  group  generated  by
$s_2,s_3, \cdots, s_n$ has  the property that under the representation
$\rho _n(d)$, it preserves the flag
\[Av\subset Av+Ae_2+  \cdots + Ae_{n-1} \subset A^n,\] and  acts trivially
on  successive quotients  of  this flag. \\

Further, if $U_0$  denotes the subgroup of $U(h)$  which preserves the
above flag and acts trivially  on successive quotients, then the image
of $U_n$  is a subgroup $N_0$  of finite index in  the integral points
$U_0(O_K)$.
\end{proposition}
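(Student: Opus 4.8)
The plan is to treat the two assertions in turn: first that the image of $U_n$ lands in $U_0$, and then that it exhausts $U_0(O_K)$ up to finite index. I would begin by identifying $U_0$ with the vector group $W=\mathrm{Hom}_A(\overline{V},Av)$ occurring in the split exact sequence $\{0\}\to W\to U(h)\to U(\overline h)\to\{1\}$. Every element of $W$ fixes $Av$ and induces the identity on $\overline{V}=V/Av$, so it preserves the flag and acts trivially on successive quotients; thus $W\subseteq U_0$. For the reverse inclusion, given $g\in U_0$ the induced map $\bar g\in U(\overline h)$ fixes the hyperplane $H=\langle\overline{e_2},\dots,\overline{e_{n-1}}\rangle$ pointwise and acts trivially on $\overline V/H$, so $\bar g(\overline{e_n})=\overline{e_n}+\sum_i c_i\,\overline{e_i}$; the unitarity relations $\overline h(\bar g\,\overline{e_n},\overline{e_j})=\overline h(\overline{e_n},\overline{e_j})$ for $\overline{e_j}\in H$ force $\sum_i c_i\,\overline h(\overline{e_i},\overline{e_j})=0$. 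The relevant Gram matrix is $h_{kd-3}$, which is non-degenerate since $d\geq 3$ (Lemma \ref{isotropic}[1], as $kd-2\not\equiv 0\pmod d$), so all $c_i=0$ and $g\in W$. Hence $U_0=W$.

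Granting this, the first assertion is immediate: by Proposition \ref{unipotent} we have $u'=\rho_n(u)\in W$, and $W$ is normal in $U(h)$ by the split sequence, so every conjugate $\rho_n(h)\,u'\,\rho_n(h)^{-1}$ with $h\in\langle s_2,\dots,s_n\rangle$ again lies in $W=U_0$. Therefore the image of $U_n$ is contained in $U_0$ and, lying there, preserves the flag and acts trivially on its successive quotients.

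For the finite-index statement I first pin down the action. By Proposition \ref{burauirred}[2] the subgroup $\langle s_2,\dots,s_n\rangle$ acts on $\overline V$ through $\rho_{n-1}(d)$, with image $\Gamma_{n-1}(d)$, and the conjugation action on $W\cong\overline{V}^*$ is the contragredient of this standard action. Thus $N_0$, being generated by the orbit $\{\rho_n(h)\,u'\,\rho_n(h)^{-1}\}$, is a non-zero $\Gamma_{n-1}(d)$-stable subgroup of $U_0(O_K)$, which as an abelian group is identified up to finite index with the free $A$-module $\overline{V}^*_A=\mathrm{Hom}_A(\overline V,Av)$. To reduce to the standard module I would use the non-degenerate form $\overline h$ to build the conjugate-$A$-linear map $\phi\colon\overline{V}_A\to\overline{V}^*_A$, $w\mapsto\overline h(\,\cdot\,,w)$. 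A short computation with unitarity gives $\phi(gw)=g\cdot\phi(w)$ for $g\in\Gamma_{n-1}(d)$, so $\phi$ is $\Gamma_{n-1}(d)$-equivariant; and since $\det\overline h=D_{kd-2}\neq 0$ (Lemma \ref{squier}, using $q^{kd-1}=q^{-1}\neq 1$), $\phi$ is injective with finite cokernel. Now pick a positive integer $m$ with $m\,\overline{V}^*_A\subseteq\phi(\overline{V}_A)$ and write $mu'=\phi(w_0)$ with $w_0\neq 0$ (possible as $u'\neq 0$ and $\phi$ is injective). Equivariance yields $\phi(\langle\Gamma_{n-1}(d)\,w_0\rangle)=\langle\Gamma_{n-1}(d)\,mu'\rangle=mN_0$, where $\langle\,\cdot\,\rangle$ denotes the subgroup generated by the indicated orbit. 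Since $n-1=kd-2\not\equiv -1\pmod d$, Lemma \ref{fullgroup} applies to $\rho_{n-1}(d)$ and shows that the non-zero stable subgroup $\langle\Gamma_{n-1}(d)\,w_0\rangle$ has finite index in $\overline{V}_A$; applying $\phi$ and using its finite cokernel, $mN_0$, and hence $N_0$, has finite index in $\overline{V}^*_A$, i.e.\ in $U_0(O_K)$.

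The step I expect to be the crux is this last, finite-index assertion, and specifically the jump from ``$N_0$ is a non-zero stable subgroup'' to ``finite index''. Irreducibility of the contragredient Burau representation over the fraction field only gives full $E$-rank, not finite index as abelian groups; the genuine input is the reflection (``multiplication by $-q$'') structure that promotes a $\Z$-stable subgroup to an $A$-module, which is exactly what Lemma \ref{fullgroup} packages. Using the equivariant duality $\phi$ lets me invoke Lemma \ref{fullgroup} on the standard module rather than re-deriving it for the contragredient; the only delicate bookkeeping is the finite cokernel of $\phi$ and the identification of $U_0(O_K)$ with $\overline{V}^*_A$, both of which affect the conclusion only up to finite index.
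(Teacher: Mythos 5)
Your proposal is correct and follows essentially the same route as the paper's proof: the first assertion comes from the normality of the vector group $U_0\cong \mathrm{Hom}_A(\overline{V},Av)$, and the finite-index assertion comes from combining the non-triviality of $u'$ (Proposition \ref{unipotent}) with Lemma \ref{fullgroup} applied to the conjugation action of $\Gamma_{n-1}(d)$. The one point where you go beyond the paper is a genuine (and welcome) patch rather than a new route: the paper invokes Lemma \ref{fullgroup} directly on the dual module $X^{*}$ with the contragredient action, although the lemma is stated only for the standard module; your equivariant conjugate-linear map $\phi(w)=\overline{h}(\,\cdot\,,w)$, injective with finite cokernel because $D_{kd-2}\neq 0$, is precisely the justification the paper leaves implicit.
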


\begin{proof} We need only prove the last part, since the image of $u$
lies in the  normal subgroup $U_0$ and is  preserved under conjugation
by elements of $U(h)$. The  conjugation action of $M$ on $U_0$ becomes
the  action  of $M=U(X)$  on  the dual  $X^*=Hom  (X,  Av)$. By  Lemma
\ref{fullgroup},  $N_0$ is  of  finite index  provided  it contains  a
non-zero  element of  $U_0$;  but it  was  already shown  (Proposition
\ref{unipotent}) that  the image of  $u$ is non-trivial in  $U_0$. The
proposition follows.
\end{proof}

\subsection{Constructing unipotents when $n=kd$ with $k\geq 1$} \label{nequals2dcase}
The space $V=A^n$ may be written as a direct sum
\[V=Av\oplus X  \oplus Av^*,~v= e_1+ \sum  _{k=2}^{n-1} x_ke_k, ~~v^*=
e_n+\sum _{k=1} ^{n-1}y_ke_{n-k}\] as  in the preceding. Then $M=U(X)$
is  the  Levi part  of  the parabolic  subgroup  $P$  of $U(h)$  which
preserves the flag
\[0\subset Av \subset Av\oplus X  \subset V,\] and $U$ is the subgroup
of $P$ which acts trivially on successive quotients of this flag.

Let  $\Delta  '$ be  as  in the  previous  subsection;  then $m=  \rho
_n((\Delta ')^2)$ lies in $M$ since it acts trivially on $v^*$ and $v$
($v^*$ and  $v$ are orthogonal to  all the vectors  $e_2, e_3, \cdots,
e_{n-1}$, and  hence are fixed by $s_2,  \cdots, s_{n-1})$; therefore,
$v,v^*$ are  fixed by $\Delta '$,  and $\Delta '$  preserves the space
$X$. By Lemma \ref{scalar} ,  $(\Delta ')^2$ acts by a scalar $\lambda
\neq 1$ on $X$. \\

\begin{proposition}\label{verifycriterion}  Assume  $n=kd$.  Then  the
following hold. \\

{\rm [1]}  The   element  $u=[s_1,  (\Delta  ')^2]$  acts   by  a  {\bf
non-trivial} unipotent element on $A^n$ under the representation $\rho
_n(d)$.   More  precisely,  the  element  $\rho  _n(d)  (u)$  lies  in
$U(O_K)\setminus Z$  i.e.  preserves the  flag $Av \subset  Av\oplus X
\subset V$, acts  trivially on the successive quotients,  and does not
preserve the subspace $X$. \\

{\rm [2]}  The group $N$  generated by the conjugates  $huh^{-1}$ with
$h\in \Gamma _{n-1}$ {\rm (}the  group generated by the elements $\rho
_n(s_2),  \cdots, \rho  _n(s_{n-1})${\rm )}  is a  subgroup  of finite
index in $U(O_K)$; in particular, $\Gamma _n$ intersects the unipotent
radical $U(O_K)$ in a subgroup of finite index.

\end{proposition}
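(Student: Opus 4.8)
The plan is to realize $u'=\rho_n(d)(u)$ inside the Heisenberg unipotent radical $U=H(X)$ of the maximal parabolic $P$ attached to the isotropic flag $0\subset Av\subset Av\oplus X\subset V$, and then to spread its $\Gamma_{n-1}$-conjugates across $U$ using the irreducibility of the Burau module carried by $X$. Throughout I write $p=\rho_n(d)(s_1)$, $m=\rho_n(d)((\Delta')^2)$, $W=Av\oplus X$, and $M=U(X)$ for the Levi factor. I will use, from the paragraph preceding the statement, that $m$ lies in $M$ and acts on $X$ by a scalar $\lambda\neq 1$; this is where Lemma \ref{scalar}, applied to $\dim X=kd-2$, enters. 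Part [1] will then come from the commutator computation of Lemma \ref{nondegcommutator}, and part [2] from combining Lemma \ref{fullgroup} with Lemma \ref{finiteunipotent}.

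For part [1], I would first check that $p\in P$. Since $W=Av\oplus X$ is the span of $e_1,\dots,e_{n-1}$, the formulas $p(e_1)=-qe_1$, $p(e_2)=e_1+e_2$, $p(e_j)=e_j$ ($j\geq 3$) show $p(W)=W$; as $p$ is unitary this forces $p(W^{\perp})=W^{\perp}=Av$, so $p$ preserves the whole flag. Writing $p=(g,x)$ with $g\in M$ and $x$ in $U$, the key point is that $x$ has \emph{nonzero} image in $U_0=U/Z\cong Hom(X,Av)$: the Levi part $g$ preserves $X$, whereas $p$ itself sends $e_2\in X$ to $e_1+e_2\notin X$, so the unipotent part of $p$ is non-central. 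Lemma \ref{nondegcommutator}, with $\alpha=p$ and the scalar $m=(\lambda,0)$, $\lambda\neq 1$, then gives at once that $u'=[p,m]$ is a non-central element of $U$. Being a commutator of integral unitary matrices it lies in $U(O_K)$, it is unipotent, and it fails to preserve $X$ because its nonzero $Hom(X,Av)$-component moves some vector of $X$ out of $X$; this is exactly the assertion $u'\in U(O_K)\setminus Z$ of [1].

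For part [2], I would observe that $\Gamma_{n-1}=\langle\rho_n(s_2),\dots,\rho_n(s_{n-1})\rangle$ sits inside $M=U(X)$: each $s_j$ with $2\leq j\leq n-1$ fixes $v$ and $v^{*}$ (they are orthogonal to $e_2,\dots,e_{n-1}$, so the reflection formula $s_j(x)=x-\frac{q\,h(x,e_j)}{q+1}e_j$ fixes them) and preserves $X$, and on $X$ the operators $s_2,\dots,s_{n-1}$ are precisely the Burau operators of $\rho_{kd-2}(d)$, which is irreducible since $kd-2\not\equiv -1\pmod d$ (Proposition \ref{burauirred}). Hence $M$ normalizes $U$, every conjugate $hu'h^{-1}$ ($h\in\Gamma_{n-1}$) lies in $U(O_K)$, and $N\subset U(O_K)$. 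Passing to $U_0=U/Z$, the image $N_0$ is the subgroup generated by the $\Gamma_{n-1}$-orbit of the nonzero vector $\overline{u'}\in U_0$ from part [1], and is therefore $\Gamma_{n-1}$-stable. The conjugation action of $M$ on $U_0\cong Hom(X,Av)$ is the (conjugate-)dual of the standard action on $X$, so $N_0$ is a nonzero $\Gamma_{n-1}$-stable subgroup of this Burau-type module; Lemma \ref{fullgroup} then shows $N_0$ has finite index in $U_0(O_K)$, and Lemma \ref{finiteunipotent} upgrades this to: $N$ has finite index in $U(O_K)$. Since $N\subset\Gamma_n\cap U(O_K)$, the final clause follows.

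The step I expect to be the main obstacle is the last one, namely invoking Lemma \ref{fullgroup} for $U_0$ rather than for $X$ itself: because $U_0$ is the dual $Hom(X,Av)$ and not the standard module, one must ensure that the lemma's conclusion (every nonzero stable subgroup is of finite index) genuinely transfers to the dual. I would resolve this by using the non-degenerate Hermitian form $h_X$ on $X$ to identify $U_0=Hom(X,Av)$ with the conjugate module $\overline{X}$, on which $\Gamma_{n-1}$ acts by the involution $q\mapsto q^{-1}$ applied to $\rho_{kd-2}(d)$; this is again a Burau representation at a primitive $d$-th root of unity, so the reflection argument proving Lemma \ref{fullgroup} applies verbatim. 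The only other point to keep straight is that $\overline{u'}\neq 0$, which is exactly what part [1] supplies, so the two parts dovetail.
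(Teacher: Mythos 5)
Your proof is correct and takes essentially the same route as the paper: part [1] is the paper's application of Lemma \ref{nondegcommutator} to $\alpha=\rho_n(d)(s_1)$ (after checking it lies in $P$ with non-central unipotent part) and the scalar $m=\rho_n(d)((\Delta')^2)$ with $\lambda\neq 1$ from Lemma \ref{scalar}, and part [2] reproduces, inline, the paper's appeal to Proposition \ref{unitaryunipotent}, namely the action of $\Gamma_{n-1}$ on $U_0\cong Hom(X,Av)$ as the dual of a Burau module, Lemma \ref{fullgroup}, and then Lemma \ref{finiteunipotent}. Your explicit transfer of Lemma \ref{fullgroup} to the conjugate-dual module via the nondegenerate form $h_X$ addresses a point the paper passes over silently, but it is the same argument.
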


\begin{proof}  The  element $u$  clearly  preserves  the  flag of  the
Proposition, as  was verified in the preceding  Proposition. Since $u$
acts by  a unipotent element on  $W$, and is a  commutator, it follows
that $u$ acts trivially on  the one dimensional space $V/W$, and hence
$u$ acts unipotently on $V$. \\

By the preceding proposition, $u$  does not preserve the space $X$: it
takes the basis element $e_2\in X$  into a sum of $av$ and elements of
$X$, with  $a\neq 0$ a  scalar.  By Lemma  \ref{nondegcommutator}, $u$
does not lie in the centre of $U$. \\

Let  $B$  be  the  group  generated  in  $U(O_K)$  by  the  conjugates
$huh^{-1}:h\in H$  where $H$ is  the group generated by  $s_2, \cdots,
s_{n-1}$. Under the quotient map  $U\ra U_0$, $B$ maps onto $B_0$, and
by the preceding Proposition, $B_0$ has finite index in $U_0(O_K)$. \\

Now    the    Proposition    follows,    by   appealing    to    Lemma
\ref{finiteunipotent}.

\end{proof}

\section{Proof     of     Theorem     \ref{mainth}} 
\label{proofssection}

\subsection{Proof of Theorem \ref{mainth}}

We  prove the  main  theorem (Theorem  \ref{mainth})  by induction  on
$n\geq 2d$;  we will prove  Theorem \ref{mainth} directly for  all $n$
which are multiples  of $d$ and are at least  $2d$. Then by induction,
Theorem \ref{mainth} follows. 

\subsubsection{Proof when $n=kd$, with $k\geq 2$}

The representation $\rho _{n-1}$  is not irreducible.  Let $Av \subset
V_{n-1}$ be  the subspace of invariants. The  quotient $V_{n-1}/Av$ is
an  irreducible  representation of  the  braid group $B(s_2,  \cdots,
s_{n-1})$.  Hence the commutator element
\[u=[s_1,\Delta (s_2,\cdots,  s_{n-1}) ^2 ] \] is  a {\it non-trivial}
unipotent element  under $\rho  _{n-1}$ and lies  in the  vector group
$Hom _A(V_{n-1}/Av, Av)$.  Therefore, $u$ preserves the flag $0\subset
Av\subset  V_{n-1}\subset V$ and  acts unipotently  on $V$  since $u$,
being  a   commutator,  has  determinant  one  and   is  unipotent  on
$V_{n-1}$. \\

Therefore, by  Proposition \ref{verifycriterion}, the  group generated
by  the  elements  $huh^{-1}$  with  $h\in  B(s_2,  \cdots,  s_{n-1})$
generate  a subgroup  commensurable  with $U(O_K)$  where  $U$ is  the
unipotent group preserving the  flag $Av \subset V_{n-1}\subset V$ and
acting  trivially  on  successive  quotients  of  the  flag.   We  have
therefore proved  that $\Gamma _n  \supset U(O_K)^N$ for  some integer
$N$. \\

Similarly, $\Gamma _n\supset U^{-}(O_K)^N$ for some integer $N$, where
$U^{-}$ is opposite  to $U$.  Therefore, $\Gamma _n$  is an arithmetic
group, since $n\geq 2d$ and  hence $U(V_n)$ has $K$-rank at least two:
the space $V_{d-1}$ spanned  by $e_1,\cdots, e_{d-1}$ has an isotropic
vector $v$  by Lemma \ref{isotropic}. The  subspace $V'_{d-1}$ spanned
by  $e_{d+1},  \cdots, v_{2d-1}$  also  has  the  same Hermitian  form
$h_{d-1}$   and   contains  an   isotropic   vector   $v'$  by   Lemma
\ref{isotropic}.   Clearly,  $V_{d-1}$  and  $V_{d-1}'$  are  mutually
orthogonal,  since the indices  $j$ of  the bases  $e_j$ differ  by at
least two.  Thus we  have produced two mutually orthogonal independent
isotropic  vectors,  and  hence  the  $K$-rank of  the  unitary  group
$U(V_n)$ is at least two.  Therefore, Theorem \ref{mainth} follows for
$n=kd\geq 2d$ from Corollary \ref{unitarycorollary}.

\subsubsection{Proof that  Theorem \ref{mainth} for $n-1$ implies  
Theorem \ref{mainth} for $n$ when $kd < n \leq kd+d-2$}

In this case, $\rho _n$  and $\rho _{n-1}$ are irreducible. Then $V_n$
contains  both  the  subspace  $V_{n-1}$  and the  span  $W_{n-1}$  of
$e_2,e_3, \cdots,  e_n$; moreover, both these  subspaces $V_{n-1}$ and
$W_{n-1}$ are non-degenerate  under $h$. The intersection $V_{n-1}\cap
W_{n-1}$ contains an isotropic vector, since the intersection contains
the span  of $e_2, \cdots,  e_{d+1}$: recall that $n\geq  2d$, hence
$n-2\geq d$.

By the  induction assumption,  there exists an  integer $N$  such that
$U(V_{n-1})^N   \subset   \Gamma_{n-1}    \subset   \Gamma   _n$   and
$U(W_{n-1})^N \subset \Gamma _{n-1}  \simeq <s_2, \cdots, s_n> \subset
\Gamma _n$.  It follows  from Lemma \ref{inductivelemma}  that $\Gamma
_n$ is arithmetic.

\subsubsection{Proof that  Theorem \ref{mainth} for $n-1$ implies 
Theorem \ref{mainth}  for $n$ when $n=kd-1$}.

By the  previous subsection, $\Gamma _{n-1}$ is  arithmetic. Since $n$
is congruent  to $-1$  modulo $d$, the Hermitian form 
$h_n$  is degenerate. Then, Proposition \ref{unitaryunipotent} implies that 
$\Gamma _n$  intersects the vector  part in an arithmetic  group. \\

Since the Hermitian form is  degenerate, the unitary group $U(V_n)$ of
$V_n$ is a semi-direct product  of its reductive part $U(V_{n-1})$ and
its  unipotent  part $Hom  (V_{n-1},  Av)$.   Then, the  decomposition
$U(V_n)(O_K)=U(V_{n-1})(O_K) Hom  _A(V_{n-1},Av)$ shows that $\Gamma  _n$ is
also arithmetic.\\

Combining the  above subsections  together, we obtain  a proof  of our
main result, namely Theorem \ref{mainth}, for all $n\geq 2d$. \\

\section{Proof of Theorems \ref{imagquad} and 
\ref{arithmetic}} 
\label{imagquadproofsection}

\subsection{Proof  of Theorem \ref{imagquad}}  In this  subsection, we
prove that if  $d$ is one of the numbers $3,4,6$  then for {\it every}
integer $n \geq 1$, the image of the Burau representation $\rho _n(d)$
is an arithmetic group.  First note that in these cases $d=3,4,6$, the
$d$-th cyclotomic extension $E_d=\Q[q]/(\Phi _d(q))\simeq \Q(e^{2\pi i
/d})$ is  an imaginary  quadratic extension of  $\Q$, and  the totally
real sub-field $K_d=\Q$;  the ring of integers $O_d$  is the ring $\Z$
of rational  integers.  Therefore,  we have $\Gamma  \subset U(h)(\Z)$
and  the ambient  Lie group  is  $U(h)(\R)$ since  $K\otimes _{\Q}  \R
=\R$. Note that  $U(h)(\R)$ is of the form $U(r,s)$  and there is only
one factor  involved, since $\Q$ has only  one archimedean completion,
namely $\R$. \\

We divide the proof into three cases. \\

Case 1. $n\geq 2d$. In this case, this is exactly Theorem \ref{mainth}
and this was already proved. \\

Case 2.  $n\leq d-2$  or $d\leq n \leq 2d-1$.  We refer  to Table 9 on
page 32 of \cite{Mc}. In  these cases, the group $SU(h)(\R)$ is either
compact or  is isomorphic  to $U(n-1,1)$. If  $U(h)$ is  compact, then
$\Gamma$ is  finite and is  hence ``arithmetic''; this is  the trivial
case. If  $U(h)=U(n-1,1)$, then by Theorem (10.3)  in \cite{Mc} (which
is actually a  special case of a result  of Deligne-Mostow), the image
$\Gamma =\rho _n(d)(B_{n+1})$ is a  lattice, i.e. $\Gamma $ has finite
index in $U(h)(\Z)$.  \\

Case 3.  $n= d-1$ or $n=2d-1$.   Since $n$ is congruent to $-1$ modulo
$d$,  Part [2]  of Proposition  \ref{burauirred} tells  us  that $\rho
_n(d)$ contains a trivial  sub-representation and that the quotient is
isomorphic to $\overline{\rho _{n-1}(d)}$; moreover, the unitary group
$U(h)=U(h_n)$  has  a unipotent  radical  $U_0$  and  a smaller  group
$U(h_{n-1})$  as a Levi  supplement. By  the preceding  paragraph, the
image  of $\Gamma $  in $U(h_{n-1})(\Z)$  is arithmetic;  moreover, by
Proposition  \ref{unitaryunipotent}  (note that  $n$  is  of the  form
$kd-1$  with  $k=1$ or  $2$),  the  image  of a  subgroup  $U_n\subset
B_{n+1}$ under the representation $\rho _n(d)$ is a subgroup of finite
index in the integral unipotent radical $U_0(\Z)$. Hence $\Gamma $ has
finite index in  the integral points of the  semi-direct product group
$U(h)$.

\subsection{Proof of Theorem \ref{arithmetic}} \label{proofarithmetic}
We  will  now prove  Theorem  \ref{arithmetic}.   Let ${\mathfrak  a}$
(resp.    ${\mathfrak   a}_e$)   denote   the   principal   ideal   in
$R=\Z[q,q^{-1}]$ generated  by the polynomial $(q^d-1)$  (resp. by the
$e$-th cyclotomic polynomial $\Phi  _e(q)$). Let $A= R/{\mathfrak a}$.
By the  Chinese remainder theorem,  the $\Q$-algebra $A\otimes  \Q$ is
the    product     ring    $A\otimes    \Q     =\prod    _{e\mid    d}
\Q[q,q^{-1}]/({\mathfrak  a}_e\otimes \Q)\simeq  \prod E_e$  where the
product runs  over all the divisors  of $d$. Here $E_e$  is the $e$-th
cyclotomic extension.  Consequently,  the reduced Burau representation
$\rho _n(A)$ on the rational vector space $A^n\otimes \Q \simeq E_e^n$
is  the direct sum  $\oplus _{e\mid  d} \rho  _n(e)$. Hence  the image
$\Gamma  $ of  the braid  group under  the Burau  representation $\rho
_n(A)$ lies in  the product $\prod U(h)(O_e)$ where  $O_e$ is the ring
of  integers in  the totally  real sub-field  $K_e=\Q({\rm  cos} \frac
{2\pi}{d})$, as in the introduction. \\

[1] We first  prove Theorem \ref{arithmetic} in the  case that $d$ and
$n+1$ are coprime.  In that case,  every divisor $e$ of $d$ is coprime
to $n+1$.  By Proposition  \ref{burauirred}, the Hermitian form $h$ on
$E_e^n$  is  non-degenerate and  the  representation  $\rho _n(e)$  is
irreducible.  Since $n\geq 2d$,  Theorem \ref{mainth} implies that the
image of $\Gamma $ under $\rho _n(e)$ is a subgroup of finite index in
$G_e(O_e)$, where $G_e=U(h)$ is the  unitary group of $h$ with respect
to  Hermitian  form  $h$  corresponding  to  the  quadratic  extension
$E_e/K_e$,  provided $e\geq  3$.  Moreover,  there exists  a subgroup,
namely  $SU(h)(O_e)$,  of  finite  index  in $G_e(O_e)$  which  is  an
arithmetic  subgroup of a  higher rank  semi-simple Lie  group, namely
$SU(h)K_e\otimes _{\Q} \R)$.  \\

If $e=2$ then $E_e=\Q[q]/(\Phi _2(q))=\Q$  and the image of $q$ in the
field $E_e=\Q$ is $-1$. The form $h$ vanishes identically, since $q+1$
divides all entries;  we replace this zero form  by first dividing $h$
by  $q+1$  for a  variable  $q$ and  then  taking  the resulting  form
evaluated at  $q=-1$.  This ``divided''  form is symplectic.Therefore,
$G_e$  is  the  symplectic  group;  then  by  the  result  of  A'Campo
\cite{A'C}, the image of $\Gamma $ in $G_e(O_e)=Sp(h, \Z)$ is a higher
rank arithmetic group. \\

If $e=1$ then  the Burau representation is evaluated  at $1$; i.e. the
representation  of the  braid group  $B_{n+1}$ lies  in  the symmetric
group  $S_{n+1}$  which  is a  finite  group  and  may be  ignored  in
questions on arithmeticity.  Lemma \ref{products} now implies that the
image  $\Gamma  $  in  $U(h)(A)=\prod  _{e\geq  2}  U(h)(O_e)$  is  an
arithmetic  group in  the product,  under the  assumption  that $n\geq
2d$. \\

[2] Suppose  $n+1$ and $d$ are not  coprime, and let $r\geq  2$ be the
g.c.d. of $d$ and  $n+1$. If a divisor $e$ of $d$  does not divide $r$
then  it does  not divide  $n+1$ either  and hence  $U(h)(O_e)$  is an
arithmetic  group in  a higher  rank semi-simple  group (up  to finite
index) and the projection to the $e$-th factor of the group $ \Gamma $
has finite  index in  $U(h)(O_e)$. \\ 

If $e$ does divide $n+1$, then $U(h)$ as an algebraic group over $K_e$
is not  reductive; suppose  $V_e$ is the  unipotent radical  of $U(h)$
viewed as a group over  $E_e$. By Theorem \ref{mainth} applied to this
case,  the projection of  $\Gamma $  to the  $e$-th factor  contains a
subgroup  of  finite index  in  $V_e(O_e)$  (see  the proof  of  Theorem
\ref{mainth}, the subsection where $n+1$ is divisible by $e$). \\

Putting the  above cases together, and using  Lemma \ref{products}, we
get Theorem \ref{arithmetic} in all cases.

\section{Relation with  a cyclic covering of  ${\mathbb P}^1$and proof
of theorem \ref{fullbraidmonodromy}}
\label{monodromyandburau}

In this section,  we relate the monodromy representation  of the braid
group $B_{n+1}$ considered  in Theorem \ref{fullbraidmonodromy} to the
Burau  representation   and  use   this  relation  to   prove  Theorem
\ref{fullbraidmonodromy}.

\subsection{Generalities} \label{general} Suppose 
\[\{1\}  \ra N  \ra F  \ra Q  \ra  \{1\} \]  is an  exact sequence  of
abstract  groups.  Suppose   $B$  is  a  subgroup  of   the  group  of
automorphisms of $F$ which stabilises the kernel $N$. Then $B$ acts on
the quotient $Q$ also by automorphisms, and acts on the abelianisation
$N^{ab}=N/[N,N]$; hence $B$ acts on the exact sequence
\[1  \ra N/[N,N] \ra  F/[N,N] \ra  Q \ra  \{1 \},  \] whose  kernel is
abelian. Moreover, the conjugation action of $F$ on the abelianisation
$N/[N,N]$ is trivial on $N$ and  descends to an action of the quotient
$Q$ on  $N^{ab}$. If  $N^{ab}$ is written  additively, then we  have a
$\Z[Q]$ module structure on $N^{ab}$.   (Further, the action of $Q$ on
$N^{ab}$  is via  its action  on the  group  $Out(N)=Aut(N)/Int(N)$ of
outer automorphisms of $N$). \\

If we now assume that $B$  acts trivially on the quotient $Q$, we then
have an action of the  product group $B\times Q$ on the abelianisation
$N^{ab}$.   Therefore, the  action of  $B$ on  $N^{ab}$ is  by $\Z[Q]$
module endomorphisms.

\subsection{Action    of    the    braid    group    on    the    free
group} \label{artinaction}

Suppose  $F_{n+1}$ is  a  free group  on  $n+1$ generators,  $x_1,x_2,
\cdots, x_{n+1}$.   Recall that the  braid group on $n+1$  strands was
given by generators  $s_i$ and relations given in  the introduction. A
theorem of  E.Artin (\cite{Bir}, Corollary 1.8.3) says  that the  
braid group $B_{n+1}$  acts on  $F_{n+1}$ as
follows. If $j\leq  i-1$ or if $j \geq i+2$,  then $s_i(x_j)= x_j$; If
$j=i,i+1$ then the action is
\[s_i(x_i)=x_{i+1},  ~~s_i(x_{i+1})=  x_{i+1}^{-1}x_ix_{i+1}.  \]  (In
\cite{Bir}, Corollary  1.8.3, the action is  on the right;  to get the
left action, one can make a slight modification of the formulae). \\

The  action of $B_{n+1}$  on $F_{n+1}$  gives an  action of  the braid
group on the abelianisation $F_{n+1}^{ab}=\Z^{n+1}$ of the free group;
the images  of $x_i$ form the  standard basis of  $\Z^{n+1}$. From the
equations in  the preceding  paragraph, it is  clear that  the element
$s_i$ acts by  the permutation matrix interchanging $i$  and $i+1$ and
fixing the  rest of  the basis.  It  is also  clear that the  image of
$B_{n+1}$ in the automorphisms of $\Z ^{n+1}$ is the permutation group
$S_{n+1}$ on $n+1$ symbols. \\

The  kernel, denoted  $P_{n+1}$, of  the map  $B_{n+1}\ra  S_{n+1}$, is
called the {\it pure braid group}.

\subsection{Realisation of the Burau representation on homology}

Write $G=\Z^{n+1}$ for the abelianisation of the free group $F_{n+1}$,
where $G$  is written multiplicatively.  We have a map  $G\ra \Z\simeq
q^{\Z}$ given by
\[x_1^{m_1}x_2^{m_2}\cdots                     x_{n+1}^{m_{n+1}}\mapsto
q^{-(m_1+m_2+\cdots+m_{n+1})}.\]  The group  $S_{n+1}$ (and  hence the
braid  group $B_{n+1}$)  acts  on $\Z^{n+1}$  by  permutations on  the
standard  basis, and  acts trivially  on  $q^{\Z}$. The  above map  is
equivariant with  respect to this  action.  Moreover, the  braid group
$B_{n+1}$  acts on  $F_{n+1}$  and the  map  $F_{n+1}\ra \Z^{n+1}$  is
equivariant with respect to this action. We now have an exact sequence
\[ 1 \ra  K_{n+1}\ra F_{n+1}\ra q^{\Z} \ra 1  \] with kernel $K_{n+1}$
stable under the action of the braid group $B_{n+1}$. \\

The  group $F_{n+1}$  has  the standard  generators $x_1,x_2,  \cdots,
x_{n+1}$. {\it As a normal subgroup of} $F_{n+1}$, the group $K_{n+1}$
is s generated by the elements
\[y_1=x_1^{-1}x_2,  y_2=x_2^{-1}x_3,\cdots,  y_n=x_n^{-1}x_{n+1}.\] As
was observed in the  preceding paragraph, $B_{n+1}$ acts on $K_{n+1}$;
we compute its action  on the ``standard basis'' $y_1,y_2, \cdots,y_n$
of $K_{n+1}$. The braid group is  generated by $s_i$ and the action of
$s_i$   on   $F_{n+1}$   was   described   before.   Hence   we   have: 
$s_i(y_{i-1})=x_{i-1}^{-1}x_{i+1}=y_{i-1}y_i$,
\[s_i(y_j)=s_i(x_{j-1}^{-1}x_j)= y_j~~(j\leq i-2, \quad {\rm or} \quad
j\geq i+2),\]
\[s_i(y_i)=x_{i+1}^{-1}x_{i+1}^{-1}x_ix_{i+1}=x_{i+1}^{-1}y_i^{-1}x_{i+1}=
^{x_{i+1}^{-1}}(y_i^{-1})\] and
\[s_i(y_{i+1})=          x_{i+1}^{-1}x_i^{-1}          x_{i+1}x_{i+2}=
x_{i+1}^{-1}x_i^{-1}x_{i+1}^2y_{i+1}=    ^{x_{i+1}^{-1}}(y_i)y_{i+1}.\]
We now consider the  commutator subgroup $K_{n+1}^{(1)}$ of $K_{n+1}$;
it is a  normal subgroup of $F_{n+1}$ and is  stabilised by the action
of the braid group.  Therefore, we have an exact sequence of groups
\[1 \ra M_n= K_{n+1}/K_{n+1}^{(1)}\ra F_{n+1}/K_{n+1}^{(1)} \ra q^{\Z}
\ra  1, \]  with abelian  kernel $M_n$  (namely the  abelianisation of
$K_{n+1}$)  written additively.  Since $K_{n+1}$  is generated  by the
$y_i$ as a normal subgroup  of $F_{n+1}$ and the conjugation action of
$F_{n+1}$  on  the abelian  group  $M_n$  descends  to the  action  of
$q^{\Z}$, it follows that $M_n$  is generated as a $q^{\Z}$-module, by
the images $y'_i$ (under the  quotient map $K_{n+1}\ra M_n$) of $y_i$.
Since the group law on $M_n$ is written additively, for each $x_i$ the
conjugation action  on $M_n$ is  simply multiplication by  the element
$q^{-1}$.  We  are in the  situation of subsection  \ref{general} with
$N^{ab}=M_n$, $Q=q^{\Z}$ and $B=B_{n+1}$.  \\

The  action  of the  braid  group  on the  basis  $y_i$  of the  group
$K_{n+1}$ was computed  above; this gives a description  of the action
of $s_i$ on the basis $y_i'$ of $M_n$ as follows. We have the formulae
\[s_i(y_j')=y_j'~~(j\leq i-2  \quad {\rm  or} \quad j\geq  i+2), \quad
s_i(y_{i-1}')= y_{i-1}'+y_i', \]
\[s_i(y_i')= -qy_i' , \quad s_i(y_{i+1}')=y_{i+1}'+qy_i'.\]

Now write $y_i= q^ie_i$. In the basis $\{e_i\}$, we get 
\[s_i(e_j)=e_j~(\mid j-i\mid \geq 2), \quad s_i(e_{i-1})= e_{i-1}+qe_i,\]
\[s_i(e_i)=-qe_i,  \quad s_i(e_{i+1})=e_{i+1}+e_i,\] which  is exactly
the  reduced  Burau representation  defined  in  the introduction.  We
therefore have:

\begin{theorem}  \label{burautheorem} {\rm  (Burau)} Let  $K_{n+1}$ be
the kernel  of the  map $F_{n+1}\ra q^{\Z}$  defined above.  Then, the
action of the braid Group $B_{n+1}$ on the first homology of $K_{n+1}$
with  integral  coefficients,  is  isomorphic  to  the  reduced  Burau
representation.
\end{theorem}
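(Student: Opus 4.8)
The plan is to read the theorem as the culmination of the computations just carried out, with one module-theoretic point isolated and made precise: the freeness of the homology as a module over $R=\Z[q,q^{-1}]$. Applying the generalities of subsection \ref{general} to the exact sequence $1 \ra K_{n+1} \ra F_{n+1} \ra q^{\Z} \ra 1$, the conjugation action of $F_{n+1}$ on the abelianisation $M_n=K_{n+1}/K_{n+1}^{(1)}=H_1(K_{n+1},\Z)$ descends to an action of $q^{\Z}$, equipping $M_n$ with an $R$-module structure; since $B_{n+1}$ stabilises $K_{n+1}$ and acts trivially on $q^{\Z}$, it acts on $M_n$ by $R$-module automorphisms. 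Thus the content of the theorem is an identification of this $R$-module-with-$B_{n+1}$-action with the free module $R^n$ carrying the reduced Burau representation $\rho_n$ of the introduction.

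First I would pin down the $R$-module structure of $M_n$. By Nielsen--Schreier $K_{n+1}$ is free, so $M_n$ is a free abelian group; what I actually need is that it is free of rank $n$ over $R$. The cleanest route is the infinite cyclic cover $\widetilde X \ra X$ of the wedge $X$ of $n+1$ circles determined by $\var:F_{n+1}\ra q^{\Z}$, for which $H_1(\widetilde X,\Z)=M_n$ with the deck transformation supplying the $q$-action. Being a graph, $\widetilde X$ has an equivariant cellular chain complex that reduces to a single boundary map $\partial:R^{n+1}\ra R$, sending the generator attached to $x_i$ to $(q^{-1}-1)$ times the vertex, since every $x_i$ has the same image $q^{-1}$. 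As $R$ is an integral domain and $q^{-1}-1\neq 0$ for $d\geq 2$, the kernel is $M_n=\{(a_1,\dots,a_{n+1}):\sum a_i=0\}$, which is free of rank $n$; a Fox-derivative computation shows that the classes of $y_1,\dots,y_n$ (equivalently $y_1',\dots,y_n'$) map to the standard difference vectors and so form an $R$-basis, not merely a generating set.

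Second, I would feed in the explicit action of the $s_i$ on $y_1',\dots,y_n'$ computed above and perform the normalising change of basis $e_i=q^{-i}y_i'$. Direct substitution turns the displayed formulas into $s_i(e_{i-1})=e_{i-1}+qe_i$, $s_i(e_i)=-qe_i$, $s_i(e_{i+1})=e_{i+1}+e_i$, and $s_i(e_j)=e_j$ for $|j-i|\geq 2$, which are verbatim the defining formulas of $\rho_n$. Matching the bases $\{e_i\}$ then gives an $R$-module isomorphism $M_n\simeq R^n$ intertwining the $B_{n+1}$-action with $\rho_n$, which is the assertion.

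I expect the only genuinely substantive step to be the freeness-and-basis identification of paragraph two. The action computation is routine bookkeeping (and is already done in the excerpt), and the change of variables is immediate; but the theorem claims an isomorphism onto the \emph{free} Burau module $R^n$, so one cannot sidestep verifying that $M_n$ is free of rank exactly $n$ with the $y_i'$ a genuine $R$-basis rather than a mere generating family. Once the chain-complex computation is in hand this is settled, and the remainder follows formally.
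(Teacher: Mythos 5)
Your proposal is correct, and its computational core is the same as the paper's: both pass to the abelianisation $M_n=K_{n+1}/K_{n+1}^{(1)}$ via the generalities of subsection \ref{general}, use the action of the $s_i$ on the classes $y_i'$, and perform the rescaling $y_i'=q^ie_i$ to land exactly on the defining formulas of $\rho_n$. The genuine difference is your second paragraph. The paper's proof of Theorem \ref{burautheorem} consists precisely of the displayed computation and the change of basis; it records only that the $y_i'$ \emph{generate} $M_n$ over $R=\Z[q,q^{-1}]$ and never verifies that they form an $R$-basis, i.e.\ that $M_n\simeq R^n$ as an $R$-module -- which, as you note, is part of the assertion since the reduced Burau representation lives on the free module $R^n$. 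Your covering-space argument (the equivariant chain complex $R^{n+1}\ra R$ of the infinite cyclic cover of the wedge of circles, whose kernel is the rank-$n$ free module of tuples summing to zero, with the classes of the $y_i$ going to unit multiples of the difference vectors) supplies exactly this missing point. It is worth observing that the paper does treat the analogous freeness question carefully, but only in the next subsection (\ref{buraurealise}) for the finite-index subgroup $K_{n+1}(d)$, where a Nielsen--Schreier rank count shows the exhibited generators are free; that argument is unavailable for $K_{n+1}$ itself, which has infinite index and infinite rank, so your chain-complex (or Fox-calculus) computation is the natural substitute and makes the proof complete where the paper's is implicit. One cosmetic slip: the condition ``$q^{-1}-1\neq 0$ for $d\geq 2$'' involves no $d$ at this stage -- here $q$ is an indeterminate, so $q^{-1}-1$ is automatically a nonzero element of the domain $R$ -- but this does not affect anything.
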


\subsection{Realisation of the Burau representation at $d$th roots of unity} \label{buraurealise}

Consider  now  the   quotient  map  $q^{\Z}  \ra  q^{\Z}/q^{d\Z}\simeq
\Z/d\Z$.    We   have   a   surjective   map   $F_{n+1}\ra   q^{\Z}\ra
q^{\Z}/q^{d\Z}$. This defines an exact sequence
\[1\ra  K_{n+1}(d)\ra  F_{n+1}\ra  \Z/d\Z  \ra 1,\]  of  groups,  with
$Z/d\Z$   written   multiplicatively.    Clearly,  $K_{n+1}(d)$   is
generated ( as  a {\it normal}  subgroup of  $F_{n+1}$) by the elements
$y_1,y_2,    \cdots,   y_n$    and   $x_1^d$    where,    as   before,
$y_i=x_i^{-1}x_{i+1}$.   Therefore, $K_{n+1}(d)$  is generated  by the
elements $x_1^d$  and the collection  $\{x_1^jy_ix_1^{-j}: 0\leq j\leq
d-1,~i\leq n\}$.

Being a subgroup  of $F_{n+1}$, $K_{n+1}(d)$ is also  free and if $n'$
denotes the minimal  number of generators of this  free group, we have
the formula
\[1-n'=d(1-(n+1))=-dn,~~n'=  1+dn.\]  Since  the  cardinality  of  the
system of generators  we have exhibited is exactly  $1+nd$, it follows
that the above generators $x_1^d$ and $x_1^jy_ix_1^{-j}$, {\it freely}
generate $K_{n+1}(d)$.\\

Therefore     (subsection    \ref{general}),     the    abelianisation
$K_{n+1}(d)^{ab}$ is a direct sum of a {\it free} module over the ring
$\Z[G]=\Z[q]/(q^d-1)=A $ with  generators $y_1',y_2',\cdots, y_n'$ and
the trivial module  $\Z (x_1 ^d) '$. Here the  prime denotes the image
of the element under  the quotient map $K_{n+1}(d)\ra K_{n+1}(d)^{ab}$.  
The  map $K_{n+1}^{ab}\ra K_{n+1}(d)^{ab} $  is equivariant for
the action of the braid group.  We have an exact sequence of $B_{n+1}$
modules:
\[0  \ra Image  (K_{n+1}^{ab})  \ra K_{n+1}(d)^{ab}  \ra  \Z \ra  0.\]
Therefore, it  follows from Theorem \ref{burautheorem}  that the braid
group acts on the abelianisation $K_{n+1}(d)^{ab} $ and that
the latter is  an extension of the reduced  Burau representation $\rho
_n  (A)$  by the  trivial  representation.   It  follows from  Theorem
\ref{arithmetic}  that the  image of  the representation  $
B_{n+1}\ra GL(K_{n+1}(d)^{ab})$ is an arithmetic group.

\subsection{Some cyclic coverings of ${\mathbb P}^1$}

Let $a_1,a_2, \cdots,a_{n+1}$ be distinct complex numbers; write $S_a$
for     the     complement     in     $\C$    of     these     points:
$S_a=\C\setminus\{a_1,a_2,\cdots,a_{n+1}\}$. The  fundamental group of
$S_a$, once  a base point is  chosen, may be identified  with the free
group on $F_{n+1}$  generated by small circles $x_i$  going around the
point $a_i$  counterclockwise once (and  joined to the  preferred base
point by an  arc which avoids all the other points  $a_j$ and has zero
winding number  around all the points  $a_j$ with $j\neq  i$). The map
$S_a\ra \C^*$ defined by
\[x\mapsto    (x-a_1)(x-a_2)\cdots(x-a_{n+1})=P_a(x),\]    induces   a
homomorphism $F_{n+1}\ra q^{\Z}$, which  sends each $x_i$ to $q^{-1}$.
Here,  $q^{-1}$ is a  small circle  around zero  in $\C^*$  which runs
counterclockwise exactly once.\\

For  future reference,  note that  the loop  around infinity  lying in
$S_a$ represents  the product element $x_1x_2\cdots  x_{n+1}$ and that
this  element  is  invariant  under  the action  of  the  braid  group
$B_{n+1}$ on the free group $F_{n+1}$. \\

The affine  variety $\C^*={\mathbb G}_m$  admits a cyclic  covering of
order $d$  given by $z\mapsto  z^d$ from ${\mathbb G}_m$  to ${\mathbb
G}_m$.   The  covering  may  be  realised  as  the  space  $\{(x,y)\in
\C^*\times  \C^*:y^d=x\}$   where  the  covering  map   is  the  first
projection.   Pulling back  this covering  to  $S_a$ we  get a  cyclic
covering of $S_a$, realised as the space
\[X_a=\{(x,y)\in                    \C^*\times                    S_a:
~~y^d=(x-a_1)(x-a_2)\cdots(x-a_{n+1})\},\]  with the  first projection
being the  covering map  from $X_a$ onto  $S_a$. Therefore,  under the
identification of  the fundamental group of $S_a$  with $F_{n+1}$, the
fundamental group of $X_a$ is identified with $K_{n+1}(d)$. \\

As  the collection $a$  varies, we  get a  collection $\mathcal  P$ of
monic polynomials $P_a$ of degree $n+1$ which have distinct roots, and
if ${\mathcal Q}$ denotes the variety
\[(w,x,P)\in \C^*\times  \C \times {\mathcal  P}: w= P(x),\]  then the
projection on to the third coordinate gives a fibration over $\mathcal
P$ with fibre at $P$ being  $S_a$ (here $a$ is the collection of roots
of $P$).  We therefore get a monodromy action of the fundamental group
of  ${\mathcal P}$  as outer  automorphisms of  the  fundamental group
$F_{n+1}$ of the fibre.  We  have the following fundamental theorem of
E.  Artin (\cite{Bir}, 1.8) .

\begin{theorem}  {\rm   (}  Artin  {\rm)}   \label{artinstheorem}  The
fundamental group of $\mathcal P$ is the braid group $B_{n+1}$ and the
monodromy action on $F_{n+1}$ is  the action of $B_{n+1}$ on $F_{n+1}$
defined in subsection \ref{artinaction}.
\end{theorem}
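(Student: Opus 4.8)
The plan is to recognise Theorem~\ref{artinstheorem} as the combination of two classical facts about the space $\mathcal P$ of monic squarefree polynomials of degree $n+1$. Sending such a polynomial to its unordered root set $\{a_1,\dots,a_{n+1}\}$ identifies $\mathcal P$ with the unordered configuration space of $n+1$ distinct points in $\C$; since the coefficients are the elementary symmetric functions of the roots, this is just the complement in coefficient space $\C^{n+1}$ of the discriminant hypersurface. First I would establish that $\pi_1(\mathcal P)\cong B_{n+1}$. The root-set map exhibits $\mathcal P$ as the quotient of the ordered configuration space by the free $S_{n+1}$-action, giving an exact sequence $1\to P_{n+1}\to\pi_1(\mathcal P)\to S_{n+1}\to 1$ with $P_{n+1}$ the pure braid group. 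To match the abstract presentation of $B_{n+1}$ from the introduction, I would realise the generator $s_i$ as the elementary loop in which $a_i$ and $a_{i+1}$ exchange places by a counterclockwise half-turn along a small arc while the other roots stay fixed, and then verify the relations $s_js_k=s_ks_j$ ($|j-k|\ge 2$) and $s_js_ks_j=s_ks_js_k$ ($|j-k|=1$) directly as isotopies of such elementary moves.

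For the monodromy statement I would use the fibration $\mathcal Q\to\mathcal P$ with fibre $S_a$ together with the continuous section noted above, which fixes a basepoint in each fibre and hence yields an \emph{honest} action of $\pi_1(\mathcal P)=B_{n+1}$ on $\pi_1(S_a)=F_{n+1}$, rather than merely a homomorphism into $\mathrm{Out}(F_{n+1})$. The key reduction is to compute this action through mapping classes: a loop $\gamma$ in $\mathcal P$ is a motion of the $n+1$ points, and by the isotopy extension theorem it is covered by a compactly supported ambient isotopy $H_t$ of $\C$ with $H_0=\mathrm{id}$, whose time-one map $h_\gamma=H_1$ is a self-homeomorphism of $\C$ permuting the $a_i$. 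The monodromy of $\gamma$ on $\pi_1(S_a)$ is then induced by $(h_\gamma)_*$, so it suffices to compute $(h_{s_i})_*$ on the standard generators $x_1,\dots,x_{n+1}$.

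Finally I would carry out this computation for the half-twist $h_{s_i}$, taken to be supported in a small disk $D$ containing $a_i,a_{i+1}$ and no other root, equal to the identity outside $D$ and rotating by angle $\pi$ inside. Because $h_{s_i}$ is the identity outside $D$, the loops $x_j$ with $j\le i-1$ or $j\ge i+2$ are unchanged. Inside $D$ the half-turn drags the small loop around $a_i$ onto the loop around the position formerly occupied by $a_{i+1}$, giving $s_i(x_i)=x_{i+1}$, while the loop around $a_{i+1}$ is swept around $a_i$ and returns conjugated by its connecting arc, producing $s_i(x_{i+1})=x_{i+1}^{-1}x_ix_{i+1}$; these are exactly the formulae of subsection~\ref{artinaction}. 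The main obstacle is precisely this last, picture-level bookkeeping: the conjugating factor $x_{i+1}$ is an orientation-sensitive artefact of how the arcs joining the basepoint to the two small loops are swept by the half-twist, so the genuine work lies in fixing conventions --- the sense of the half-turn, the orientation of each $x_i$, and the choice of connecting arcs --- so that $(h_{s_i})_*$ comes out equal to the prescribed action and not to an inverse or a conjugate of it.
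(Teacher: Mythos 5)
The paper itself does not prove this statement: it is quoted as a classical theorem of E.~Artin with a citation to Birman's book (\cite{Bir}, 1.8 and Corollary 1.8.3), so the only meaningful comparison is with that classical argument, whose overall architecture --- configuration spaces, half-twists, isotopy extension, computation of the induced action on the free group --- your sketch does follow. Your second and third paragraphs are sound: using the section of $\mathcal Q \to \mathcal P$ to obtain an honest action of $\pi_1(\mathcal P)$ on $\pi_1(S_a)=F_{n+1}$ (rather than an outer action), realising the monodromy of a loop by the time-one map of a compactly supported ambient isotopy, and computing the effect of a half-twist on the standard generators are all correct and completable; the orientation/convention bookkeeping you flag there is real but routine.

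The genuine gap is in your first paragraph, and it is not where you locate the main difficulty. Exhibiting the elementary half-twist loops and verifying the braid relations among them produces only a homomorphism $\phi\colon B_{n+1}\to \pi_1(\mathcal P)$ from the abstractly presented group of the introduction; it proves neither that $\phi$ is surjective (that these loops generate $\pi_1(\mathcal P)$) nor, much harder, that it is injective (that the braid relations are a \emph{complete} set of relations). The exact sequence $1\to \pi_1(\mathrm{ordered\ configurations})\to\pi_1(\mathcal P)\to S_{n+1}\to 1$ does not help by itself, and as you invoke it it is circular: identifying its kernel with the pure braid group $P_{n+1}$, which the paper defines as a subgroup of the abstract $B_{n+1}$, is part of what must be proved. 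Closing this gap is the actual content of Artin's presentation theorem; the standard routes are an induction on the number of points via the Fadell--Neuwirth fibrations (which yields both generation and completeness of the relations), or Artin's original combing argument. Without such an argument the assertion that $\pi_1(\mathcal P)$ \emph{is} $B_{n+1}$ is not established, and since the monodromy formulas are asserted for the generators $s_i$ of the abstract group, the second half of the theorem inherits the same gap.
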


Consequently, the monodromy action on the fibre of the fibration
\[\{(y,x,P)\in  \C^*\times  \C\times  {\mathcal  P}:  y^d=P(x)\]  over
${\mathcal  P}$  is the  action  of  $B_{n+1}$  defined in  subsection
\ref{artinaction}  restricted to  the  subgroup $K_{n+1}(d)\simeq  \pi
_1(X_a)$; therefore,  $B_{n+1}$ acts on  the first homology  of $X_a$:
$H_1(X_a)\simeq  K_{n+1}(d)^{ab}$  by  (an  extension by  the  trivial
representation of )  the Burau representation $\rho _n(A)$  , and this
gives  the monodromy  action of  $B_{n+1}$ on  $H_1(X_a)$,  with image
$\Gamma '$, say.

\begin{theorem} \label{openmonodromy} If $n\geq 2d$, then the image of
the representation $B_{n+1} \ra GL(H_1(X_a))$ is an arithmetic group.
\end{theorem}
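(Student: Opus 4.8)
The plan is to reduce the statement to the arithmeticity of the Burau image already established in Theorem~\ref{arithmetic}. First I would pin down the monodromy module: since $X_a$ is the pullback to $S_a$ of the degree-$d$ cover $z\mapsto z^d$ of ${\mathbb G}_m$, its fundamental group is $K_{n+1}(d)=\ker(F_{n+1}\to q^{\Z}/q^{d\Z})$, so $H_1(X_a)\simeq K_{n+1}(d)^{ab}$. By Artin's Theorem~\ref{artinstheorem} the monodromy action of $B_{n+1}=\pi_1({\mathcal P})$ on $H_1(X_a)$ is precisely the action of $B_{n+1}$ on $K_{n+1}(d)^{ab}$ computed in subsection~\ref{buraurealise}. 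Hence it suffices to prove that the image $\Gamma'$ of $B_{n+1}\to GL(K_{n+1}(d)^{ab})$ is arithmetic.

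Next I would invoke the structure from subsection~\ref{buraurealise}: there is a $B_{n+1}$-equivariant exact sequence
\[0\ra V\ra K_{n+1}(d)^{ab}\ra \Z\ra 0,\]
where $V=\mathrm{Image}(K_{n+1}^{ab})$ carries the reduced Burau representation $\rho_n(A)$ with $A=\Z[q,q^{-1}]/(q^d-1)$ and the quotient $\Z$ is the trivial representation. Because $n\geq 2d$, Theorem~\ref{arithmetic} shows the image of $\rho_n(A)$ in $GL_n(A)=\prod_{e\mid d}U(h)(O_e)$ is arithmetic. Thus $\Gamma'$ surjects onto an arithmetic group and sits inside the affine group ${\mathcal G}\ltimes V$, where ${\mathcal G}$ is the $\Q$-Zariski closure of the Burau image and $V$ is regarded as a vector group with the standard ${\mathcal G}$-action.

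The substance is to promote arithmeticity of this reductive projection to arithmeticity of $\Gamma'$ itself, i.e.\ to control the $1$-cocycle $c:B_{n+1}\to V$ defining the extension. It is determined by its values on generators; since $s_i$ fixes $x_1$ for $i\geq 2$ one has $c(s_i)=0$ for $i\geq 2$, while a short computation of $(x_2^d)'$ in $K_{n+1}(d)^{ab}$ gives $c(s_1)=\bigl(\sum_{j=0}^{d-1}q^{j}\bigr)y_1'$. Decomposing $A\otimes\Q=\prod_{e\mid d}E_e$, the scalar $\sum_{j=0}^{d-1}q^{j}$ is $0$ in every factor $E_e$ with $e\geq 2$ and equals $d$ in the factor $E_1=\Q$. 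By the cocycle identity $c$ then vanishes identically on $\bigoplus_{e\geq 2}E_e^n$, so the monodromy splits as a $B_{n+1}$-module into $\bigl(\bigoplus_{e\geq 2}\rho_n(e)\bigr)$ and a residual $(n+1)$-dimensional block $W$ that is an extension of the finite permutation representation $\rho_n(1)$ by the trivial representation. The higher-rank factors are arithmetic by Theorem~\ref{arithmetic} and assemble through Lemma~\ref{products}; the block $W$ has image of the shape (finite group)$\ltimes$(finitely generated abelian group), which is a lattice in its own solvable, finite-by-unipotent Zariski closure.

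The step I expect to be the main obstacle is exactly this extension argument: verifying that passing from $\rho_n(A)$ to its extension by the trivial representation does not destroy arithmeticity. The delicate point is that the residual block $W$ is not semisimple, so Lemma~\ref{products} and the super-rigidity behind it do not apply to it directly. I would handle it as in the proof of Lemma~\ref{products}: let $N_1$ be the kernel of the projection of $\Gamma'$ onto the higher-rank factors; $N_1$ is normal in the higher-rank arithmetic group, so by the Margulis normal subgroup theorem it is either finite or of finite index. It cannot be finite, for otherwise a higher-rank lattice would embed up to finite index into the solvable group attached to $W$; hence $N_1$ has finite index, and $\Gamma'$ is commensurable with the product of its projections and therefore arithmetic.
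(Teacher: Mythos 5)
Up to its last paragraph your proposal follows, and usefully refines, the paper's own proof. The paper (subsection \ref{buraurealise} together with the two-line proof of Theorem \ref{openmonodromy}) makes exactly your identification $H_1(X_a)\simeq K_{n+1}(d)^{ab}$, exhibits the same exact sequence with Burau sub and trivial quotient, and then simply asserts that arithmeticity of the image ``follows from Theorem \ref{arithmetic}''. Your cocycle computation ($c(s_i)=0$ for $i\geq 2$, $c(s_1)=(1+q+\cdots+q^{d-1})y_1'$), the observation that $c$ vanishes rationally on the factors $E_e$ with $e\geq 2$, and the resulting splitting into $\bigoplus_{e\geq 2}\rho_n(e)$ plus a residual block $W$ are all correct, and they address precisely the point the paper glosses over.

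The genuine gap is the final rigidity step. Your $N_1$, the kernel of the projection of $\Gamma'$ to the higher-rank factors, consists by definition of elements acting trivially on every higher-rank factor, so it lies inside $GL(W)$. Running the argument of Lemma \ref{products}, $N_1$ is normalized by the projection of $\Gamma'$ to $GL(W)$, that is, by the virtually abelian group $H_W$ --- not by any higher-rank lattice. Hence the sentence ``$N_1$ is normal in the higher-rank arithmetic group'' is false, the Margulis normal subgroup theorem cannot be applied to $N_1$, and the dichotomy ``finite or of finite index'' has no basis. The contradiction you then draw is also inverted: if $N_1$ were finite, it is $\Gamma'$ that would embed, modulo a finite group, into the higher-rank product; no higher-rank lattice gets embedded into the solvable group attached to $W$, and nothing absurd occurs in that case.

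The step can be repaired by applying the argument to the opposite kernel $N_2=\ker(\Gamma'\to GL(W))$. Since the projection $\pi:\Gamma'\to\Lambda$ onto the (arithmetic, by Theorem \ref{arithmetic}) image in the higher-rank factors is surjective and $N_2$ is normal in $\Gamma'$, the image $\pi(N_2)$ is normal in $\Lambda$, and $\Lambda/\pi(N_2)$ is virtually abelian, being a quotient of $\Gamma'/N_2\simeq H_W$. The normal subgroup theorem, applied in each irreducible factor of $\Lambda$, forces $\pi(N_2)$ to have finite index in $\Lambda$; since $N_2=\pi(N_2)\times\{1\}$, the group $\Gamma'$ contains a finite-index subgroup of $\Lambda\times\{1\}$ and surjects onto $H_W$, hence has finite index in the arithmetic group $\Lambda\times H_W$. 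Simpler still, no rigidity is needed: your residual cocycle on $W$ is a coboundary over $\Q$. Indeed, identifying the basis vectors $e_i$ of $\rho_n(1)$ with the simple roots of the root system $A_n$, the vector $v=d\varpi_1$ ($\varpi_1$ the first fundamental weight, rational but in general not integral in the root lattice) satisfies $v-\rho_n(1)(s_1)v=d\,e_1=c(s_1)$ and $v-\rho_n(1)(s_i)v=0$ for $i\geq 2$. Conjugating $\Gamma'$ by the rational translation by $v$ therefore makes it block diagonal, equal to the image of $\rho_n(A)$ extended by the identity on the last coordinate; since conjugation by an element of $GL_N(\Q)$ preserves arithmeticity of subgroups of $GL_N(\Z)$, Theorem \ref{arithmetic} concludes directly. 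In particular your block $H_W$ is in fact finite (isomorphic to $S_{n+1}$), not merely virtually abelian.
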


\begin{proof}  We   have  identified  this   representation  with  the
extension by  the trivial  representation of the  Burau representation
$\rho _n(A)$ where $A=\Z[q,q^{-1}]/(q^d-1)$.  The Theorem follows from
the conclusion of the preceding subsection \ref{buraurealise}.
\end{proof}

\subsection{The  compactification of  $X_a$}  Now $X_a$  is a  compact
Riemann surface  with finitely many  punctures; denote by  $X_a^*$ the
smooth projective curve obtained by filing in these punctures. \\

The covering map  $X_a \ra S_a$ is such that  these punctures lie over
the points  $a_i$ or else  over the point  at infinity of $S_a$.  If a
puncture lies over  some $a_i$, then the image of  a small loop around
the  puncture in  $F_{n+1}$ is  $x_i^d$;  if the  puncture lies  above
infinity,  then the  image  of a  small  loop around  the puncture  in
$F_{n+1}$   is  a   power  of   the  element   $x_1x_2\cdots  x_{n+1}$
(represented by the loop  around infinity); therefore, such an element
is invariant under the action of the braid group.\\

The mapping of $\pi _1(X_a)\ra \pi _1(X_a^*)$ is such that these loops
around the punctures generate the  kernel (this is an easy consequence
of the van  Kampen theorem); note that the element  $s_j$ of the braid
group $B_{n+1}$ (under the action  on the free group $F_{n+1}$ defined
in subsection \ref{artinaction}), takes  the loop $x_i$ to a conjugate
of the loop $x_k$ for some  $k$. Therefore, the braid group leaves the
kernel of  the map $\pi  _1(X_a)\ra \pi _1(X_a^*)$ stable.   Hence, by
Artin's   theorem  (Theorem   \ref{artinstheorem})  the   induced  map
$H_1(X_a)\ra  H_1(X_a  ^*)$ on  homologies  is  equivariant under  the
monodromy action of the braid group. \\

\subsection{Proof                      of                      Theorem
\ref{fullbraidmonodromy}}  \label{prooffullbraid}  We  can  now  prove
Theorem \ref{fullbraidmonodromy}.  We are to  prove that the  image of
the  representation $B_{n+1}\ra  GL(H^1(X_a^*))$  is arithmetic  where
$H^1(X_a^*)$  is  the   cohomology  with  integer  coefficients.   The
$B_{n+1}$ module $H_1(X_a^*)$ ({\it homology} of $X_a^*$ with integral
coefficients)  is a  quotient of  the module  $H_1(X_a)$.   By Theorem
\ref{openmonodromy} the image of  the braid group in $GL(H_1(X_a))$ is
arithmetic.   By   Proposition  \ref{arithmeticimageproposition},  the
image of the  braid group in $GL(H_1(X_a^*))$ is  also arithmetic.  By
Poincar\'e duality, the image of  the braid group in $GL(H^1(X_a))$ is
also arithmetic, proving Theorem \ref{fullbraidmonodromy}.

\subsection{The representation $H_1(X_a^*,\Q)$}  

Consider     the      $\Q$     algebras     $A=\Q[q]/(q^d-1)$,     and
$A'=\Q[q]/(1+q+\cdots + q^{d-1})$. If $M$  is an $A$ module, denote by
$(1+q+\cdots  +  q^{d-1})M$, the  subspace  of  elements  of the  form
$(1+q+\cdots+q^{d-1})m$  with $m\in M$  and let  $M'$ be  the quotient
module    $M/(1+q+\cdots    +q^{d-1})M$. \\

We have  seen that $K_{n+1}(d)^{ab}$ is  an extension of  the image of
$K_{n+1}^{ab}$  in  $K_{n+1}(d)^{ab}$,  by  the trivial  module  $\Z$;
tensoring with $\Q$, we have the same statement, with $\Z$ replaced by
$\Q$.   Clearly, $\Q  '=0$.  Therefore,  we have  the equality  of the
``primed'' modules
\[K_{n+1}^{ab}\otimes  \Q\simeq Im  (K_{n+1}^{ab}\otimes  \Q)'. \]  We
have   seen   in  the   preceding   subsection   that  the   ``primed''
representation  $H_1(X_a,\Q)'$   of  the  braid   group  $B_{n+1}$  is
isomorphic to the Burau  representation $\rho _n(A')$ on $A'^n$, where
$A'=\Q[q,q^{-1}]/(1+q+\cdots  +q^{d-1})$.   Therefore,  by  subsection
\ref{proofarithmetic}, the  Burau representation $\rho  _n(A')$ on the
$\Q$-vector space $(A ')^n$ is the direct sum
\[H_1(X_a,\Q)'\simeq  \rho _n(A')=  \oplus  _{e \mid  d~~e\geq 2}  \rho
_n(e).\]  Recall from  Proposition \ref{burauirred}  that  if $n\equiv
-1~(mod  ~ e)$  then the  representation $\rho  _n(e)$ contains  a one
dimensional  space $L_e$, say,  of invariants,  and that  the quotient
$(A_e^n\otimes \Q)/L_e$    is    irreducible;     in    Proposition
\ref{burauirred},  this  representation  was  denoted  $\overline{\rho
_n(e)}$.  By  an abuse of notation,  if $e$ does not  divide $n+1$, we
denote $\rho _n(e)$ also by $\overline{\rho _n(e)}$.  \\

The  embedding of  the affine  curve $X_a$  into  its compactification
$X_a^*$  induces  a  map  $H_1(X_a,\Q)\ra H_1(X_a^*,\Q)$  on  rational
homology, which is equivariant for  the action of the braid group.  By
analysing the map $H_1(X_a)\ra H_1(X_a^*)$, one can show that (compare
Theorem  5.5  of \cite{Mc}  on  pp.  24-25  where  the  case $e=d$  is
treated)   the  monodromy   representation  $H_1(X_a^*,\Q)$   has  the
decomposition
\[H_1(X_a^*,\Q) \simeq \bigoplus _{e \mid d, ~ e\geq 2}{\overline \rho
_n(e)}.\] We use the fact that  loops around points in $X_a$ which lie
above  $\infty$ in  the curve  $S_a\subset {\mathbb  P}^1$ lie  in the
kernel of  the map  $H_1(X_a) \ra H_1(X_a^*)$;  one can show  that the
invariant vector in $\rho _n(e)$ (for $e$ dividing $n+1$) is generated
by these ``infinity'' loops and hence lies in the kernel of the map on
homology. \\

The  following   Proposition  is  an  immediate   consequence  of  the
decomposition of  the representation of $B_{n+1}$  on $H_1(X_a^*)$ and
Lemma \ref{products}.

\begin{proposition}  \label{arithmeticspecify} 
If $n\geq 2d$,  then the  image of  the monodromy
representation    of   $B_{n+1}$    on    $H_1(X_a^*)$   of    Theorem
\ref{fullbraidmonodromy} is a subgroup  of finite index in the product
$\prod  {\overline  G}_e(O_e)$  where  the  product is  over  all  the
divisors $e\geq 2$  of $d$, ${\overline G_e}$ is  the unitary group of
the  Hermitian  form  ${\overline  h_n}$  induced by  $h=h_n$  on  the
quotient  representation   ${\overline  \rho  _n(e)}$   of  the  Burau
representation $\rho _n(e)$.

\end{proposition}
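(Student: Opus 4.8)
The plan is to read the statement off the decomposition
\[H_1(X_a^*,\Q)\simeq\bigoplus_{e\mid d,\ e\geq 2}\overline{\rho}_n(e)\]
established just above, together with Lemma \ref{products}. The monodromy group $\Gamma'$ lands in the product $\prod_{e}\overline{G}_e(O_e)$, the $e$-th coordinate being the image of $\overline{\rho}_n(e)$. First I would reduce to the semisimple parts: for $e\geq 3$ the relevant group is $SU(\overline{h}_n)$, an outer form of $SL$ of type $A$, while for $e=2$ the form degenerates (after dividing by $q+1$) to a symplectic form, so $\overline{G}_2$ is $Sp$, of type $C$. Since the determinant sends $\overline{G}_e(O_e)$ into the norm-one units of the CM field $E_e$, which form a finite group, the special subgroup has finite index in $\overline{G}_e(O_e)$; it therefore suffices to prove arithmeticity of the projections to the special subgroups and then invoke Lemma \ref{products} for the family $\{SU(\overline{h}_n)\}_{e\geq 3}\cup\{Sp\}_{e=2}$.

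Next I would verify the hypotheses of Lemma \ref{products}. Each factor is absolutely almost simple of type $A$ (resp.\ $C$): the dimension of $\overline{\rho}_n(e)$ is $n$ or $n-1$, hence at least $2d-1\geq 3$. The condition $\infty$-$\mathrm{rank}\geq 2$ holds because $K_e$-$\mathrm{rank}\geq 2$; exactly as in the proof of Theorem \ref{mainth}, the spans of $e_1,\dots,e_{e-1}$ and of $e_{e+1},\dots,e_{2e-1}$ furnish two mutually orthogonal isotropic vectors as soon as $n\geq 2d\geq 2e$. Finite-indexness of each projection is Theorem \ref{mainth} when $e\nmid n+1$ (and \cite{A'C} when $e=2$); when $e\mid n+1$ the $e$-th component is the quotient representation $\overline{\rho}_n(e)$, which by Proposition \ref{burauirred}[2] restricts on $\langle s_2,\dots,s_n\rangle$ to $\rho_{n-1}(e)$. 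Here $e\mid n+1$ and $n\geq 2e$ force $n+1\geq 3e$, so $n-1\geq 2e$, and Theorem \ref{mainth} applied to $\rho_{n-1}(e)$ gives a finite-index image in $\overline{G}_e(O_e)=U(h_{n-1})(O_e)$.

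The main obstacle is the remaining hypothesis of Lemma \ref{products}: the factors must be pairwise non-isomorphic over any common base field. I would argue by cases for $e\neq f$. If $K_e\not\cong K_f$ there is nothing to prove. If $K_e\cong K_f$ but the imaginary quadratic extensions $E_e,E_f$ are non-isomorphic over this field, then the two copies of $SU$ split over non-isomorphic quadratic extensions and are therefore non-isomorphic; this already disposes of pairs such as $e=3,\ f=4$, as well as of any comparison between the $Sp$ factor and an $SU$ factor (different Dynkin type). The only surviving coincidence is $E_e\cong E_f$ with $e\neq f$, which forces $\{e,f\}=\{m,2m\}$ with $m$ odd. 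There, if exactly one of $e,f$ divides $n+1$ the two representations have dimensions $n$ and $n-1$ and the groups differ at once; otherwise the groups have the same dimension and I must compare the signatures of the Hermitian form at the two roots of unity.

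For the signature I would use the leading principal minors of $h_n$. By Lemma \ref{squier}, at $q=e^{i\theta}$ with $\theta=2\pi/e\in(0,\pi)$ one finds
\[D_k=(2\cos(\theta/2))^{k}\,\frac{\sin((k+1)\theta/2)}{\sin(\theta/2)},\]
so that $\mathrm{sign}(D_k)=\mathrm{sign}(\sin((k+1)\pi/e))$ for $0\leq k\leq n$. Hence the number of negative eigenvalues (and likewise for the nondegenerate quotient form $\overline{h}_n$) is the number of sign changes of $\sin(m\pi/e)$ for $m=1,\dots,n+1$, a count governed by $\lfloor (n+1)/e\rfloor$; this differs for $e=m$ and $f=2m$, so the signatures, and with them the groups $\overline{G}_e(\R)=U(r_e,s_e)$, are distinct. (Alternatively the signature is tabulated in \cite{Mc}, Theorem 5.5.) With every hypothesis verified, Lemma \ref{products} yields that $\Gamma'$ has finite index in $\prod_{e\mid d,\ e\geq 2}\overline{G}_e(O_e)$, which is the assertion. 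The only genuinely nonroutine input is this last signature comparison; everything else is bookkeeping on top of Theorem \ref{mainth} and Lemma \ref{products}.
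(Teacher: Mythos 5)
Your proposal follows the paper's own route exactly: the paper disposes of this Proposition in a single sentence, declaring it an immediate consequence of the decomposition $H_1(X_a^*,\Q)\simeq\bigoplus_{e\mid d,\ e\geq 2}\overline{\rho}_n(e)$ together with Lemma \ref{products}, and that is precisely your skeleton. What you add---and what the paper never does, either here or in its proof of Theorem \ref{arithmetic}---is an actual verification of the hypotheses of Lemma \ref{products}: the reduction to the absolutely almost simple groups $SU(\overline{h}_n)$ (resp.\ $Sp$ for $e=2$) via finiteness of the norm-one units; the rank bound via two orthogonal isotropic vectors as in the proof of Theorem \ref{mainth}; the finite-index projections, where your handling of the divisors $e\mid n+1$ (restrict $\overline{\rho}_n(e)$ to $\langle s_2,\dots,s_n\rangle$ to get $\rho_{n-1}(e)$, with $n-1\geq 2e$ forced by $e\mid n+1$ and $n\geq 2e$) is correct; and, crucially, the pairwise non-isomorphism of the factors, on which the paper is silent. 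Your isolation of the pairs $\{m,2m\}$, $m$ odd, as the only ones with $E_e\cong E_f$, and the use of signatures to separate them, is the genuinely non-routine content, and it is correct in substance.

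Three points in that signature step need tightening. (1) Non-isomorphism over $K_m=K_{2m}$ must be detected at a \emph{common} archimedean place: comparing the angle $2\pi/m$ for one group with $2\pi/(2m)$ for the other is legitimate because, under the identification $\zeta_{2m}=-\zeta_m^{(m+1)/2}$ of the common field, these are restrictions of a single complex embedding---but this needs saying; moreover, since Lemma \ref{products} (via superrigidity) requires ruling out an isomorphism under \emph{every} identification of $K_m$ with $K_{2m}$, the clean invariant is the minimum over all archimedean places of the number of negative eigenvalues, which is Galois-invariant and, by your formula, is roughly $\lfloor (n+1)/e\rfloor$, attained at the standard place---so it still separates $m$ from $2m$. (2) Since $U(r,s)\cong U(s,r)$, you need not only $s_m\neq s_{2m}$ but also $s_m+s_{2m}\neq n'$ ($n'$ the common dimension); this holds because $s_m+s_{2m}\approx 3(n+1)/(2m)<n'$ once $n\geq 4m$, which $2m\mid d$ and $n\geq 2d$ guarantee. (3) The naive Sylvester sign-change count does not literally apply, because intermediate leading minors vanish: $D_k=0$ whenever $e\mid k+1$, and such $k<n$ occur since $n\geq 2e$; as no two consecutive minors vanish, the standard refinement for isolated zeros (or your fallback citation of the signature computation in \cite{Mc}) closes this. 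None of these is fatal; with them patched, your argument is a complete version of what the paper asserts in one line.
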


\section{Applications} \label{applications}

\subsection{Some complex reflection groups} \label{mcmullen}

We will follow  the notation of Section 5 of  \cite{Mc}.  In Section 5
of \cite{Mc},  given the root  system $A_n$ (and therefore  its graph)
the  Artin  group $A(A_n)$  is  defined;  given  a  complex  number
$q=e^{2\pi  ix}$   with  $-1/2   \leq  x  <   1/2$,  there   exists  a
representation
\[\rho  _q: A(A_n) \ra  GL_n(\C),\] with  image denoted  $A_n(q)$. The
image preserves a Hermitian form and  is a subgroup of a unitary group
$U(r,s)\subset  GL_n(\C)$.  The  image  of the  braid group  $B_{n+1}$
under    the   Burau    representation   $\rho    _n:    B_{n+1}   \ra
GL_n(\Z[q,q^{-1}])$  may  be identified  with  the complex  reflection
group  $A_n(q)$.  Question  5.6  of \cite{Mc}  asks  when the  image
$A_n(q)$ is a lattice in  $U(r,s)$.  In the notation of question 5.6
of \cite{Mc}, the image group  $\Gamma _n= \rho _n(d)(B_{n+1})$ is the
group  $A_n(q)$  where  $q$  is  a primitive  $d$-th  root  of  unity;
therefore, question  5.6 asks whether  $A_n(q)$ can be a  lattice in
the  real unitary  group $U(r,s)$;  Theorem \ref{mainth}  answers this
question in a  large number of cases. We  have the following Corollary
of Theorem  \ref{mainth} (and part  [1] of the Corollary  follows from
Theorem \ref{imagquad}).

\begin{corollary} 

[1] If  $q=e^{2\pi i/d}$ then $A_n(q)$  is a lattice  in $U(r,s)$ when
$d=3,4,6$ for  {\bf all n}.  If $d=2$, then  $A_n(q)$ is a  lattice in
$Sp_{n}({\mathbb Z})$.\\

[2] If $d$ is not $2,3,4,6$ and  $n\geq 2d$ , then the image $\Gamma _n$
under  the  Burau representation  is  an  irreducible  lattice in  the
product  of unitary  groups $U(h)(K_d\otimes  _{\Q}\R)\simeq  \prod _v
U(r_v,s_v)$  where the  product  is over  all  the archimedean  (real)
completions $K_{d,v}$ of the totally  real field $K_d$, and the number
of factors is at least  two.  Therefore, the projection of $\Gamma _n$
to one of the factors is {\it never} a lattice if $d\neq 3,4,6$. \\

In particular,  the intersection $A_n(q)\cap SU(r_v,s_v)$  is dense in
$SU(r_v,s_v)$ for each archimedean $v$.
\end{corollary}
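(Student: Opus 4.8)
The plan is to read part [1] directly off the two arithmeticity results already available, and to deduce part [2] from Theorem \ref{mainth} together with the restriction-of-scalars description of $U(h)(O_K)$ as a lattice in a product of unitary groups. For part [1], recall that when $d=3,4,6$ the totally real field $K_d=\Q(\cos(2\pi/d))$ equals $\Q$, so $O_d=\Z$ and the only archimedean completion is $\R$; thus $U(h)(K_d\otimes_\Q\R)$ is the single factor $U(h)(\R)\simeq U(r,s)$. By Theorem \ref{imagquad} the image $\Gamma_n=A_n(q)$ has finite index in $U(h)(\Z)$, and by Borel--Harish-Chandra an arithmetic subgroup is a lattice in the group of real points; this gives that $A_n(q)$ is a lattice in $U(r,s)$. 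For $d=2$ the form $h$, after dividing by $q+1$ and specialising at $q=-1$, becomes symplectic, and the theorem of A'Campo cited in the introduction identifies $A_n(q)$ with an arithmetic, hence lattice, subgroup of $Sp_n(\Z)\subset Sp_n(\R)$.

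For part [2] I first set $K=K_d$ and apply Theorem \ref{mainth}: for $n\geq 2d$ the image $\Gamma_n$ has finite index in $U(h)(O_K)$. Passing to the Weil restriction $\mathcal G=R_{K/\Q}U(h)$ as in subsection \ref{arithmeticgroups}, we have $U(h)(O_K)\simeq\mathcal G(\Z)$ and $\mathcal G(\R)\simeq\prod_{v\mid\infty}U(h)(K_v)=\prod_v U(r_v,s_v)$, so by Borel--Harish-Chandra $\Gamma_n$ is a lattice in this product. The number of factors is the number of archimedean places of the totally real field $K$, namely $[K:\Q]=\phi(d)/2$ (Euler's totient), since $E_d/K$ is the quadratic extension cut out by complex conjugation. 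Because $\phi(d)=2$ exactly for $d\in\{3,4,6\}$ and $\phi(d)=1$ for $d\in\{1,2\}$, the hypothesis $d\notin\{2,3,4,6\}$ gives $\phi(d)\geq 4$, hence at least two factors.

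Next I would establish that $\Gamma_n$ is an \emph{irreducible} lattice in a product of noncompact higher-rank factors. By Section \ref{unitary} the group $SU(h)$ is absolutely almost simple over $K$ (as $n\geq 2$), so $\mathcal G'=R_{K/\Q}SU(h)$ is $\Q$-simple --- its real factors are permuted transitively by $\mathrm{Gal}(\overline\Q/\Q)$ acting on $\mathrm{Hom}(K,\C)$ --- and therefore $\mathcal G'(\Z)$, hence the finite-index subgroup $\Gamma_n\cap SU(h)$ (finite index because $\det\rho_n(s_i)=-q$ is a root of unity), is an irreducible lattice in $\prod_v SU(r_v,s_v)$. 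Moreover every factor is noncompact: the proof of Theorem \ref{mainth} exhibits two orthogonal isotropic vectors over $K$, so the $K$-rank of $SU(h)$ is at least $2$; an isotropic vector over $K$ stays isotropic at every real place, so each form $h\otimes K_v$ is indefinite and each $SU(r_v,s_v)$ is noncompact. In particular the $\R$-rank of the product is $\geq 2$.

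Finally I would deduce the density statements. If the projection $p_{v_0}(\Gamma_n)$ to a single factor were discrete, then its kernel would be a lattice in the complementary product and $\Gamma_n$ would virtually split, contradicting irreducibility; hence no projection is discrete, so none is a lattice, which gives the assertion that the projection to one factor is never a lattice for $d\neq 3,4,6$. Since $\Gamma_n\cap SU(h)$ is a higher-rank irreducible lattice in a product of at least two noncompact simple factors, Margulis's theory of irreducible lattices (the rigidity input already invoked in Lemma \ref{products}) forces the projection to each factor to be dense; restricting to the factor attached to $q=e^{2\pi i/d}$ shows that $A_n(q)\cap SU(r_v,s_v)$ is dense in $SU(r_v,s_v)$. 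The main obstacle is exactly this last density of a single-factor projection: it is the only step that uses genuine rigidity rather than the structural facts (arithmeticity, factor count, $\Q$-simplicity, noncompactness) assembled above, and it is where both irreducibility and the higher-rank hypothesis are essential.
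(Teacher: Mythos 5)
Your proposal is correct and takes essentially the same route as the paper: part [1] is read off from Theorem \ref{imagquad} (with A'Campo's theorem for $d=2$), and part [2] from Theorem \ref{mainth} together with the restriction-of-scalars description of $U(h)(O_{K_d})$ as an irreducible lattice in $\prod_v U(r_v,s_v)$, the count of factors being $[K_d:\Q]=\phi(d)/2\geq 2$ for $d\neq 2,3,4,6$. The paper states the corollary with no further argument beyond citing these theorems, so your write-up simply makes explicit the standard facts it leaves implicit (irreducibility of lattices coming from a $\Q$-simple restriction of scalars, noncompactness of every real factor via isotropic vectors, and the discrete-or-dense dichotomy for projections to simple factors, which rests on Borel density rather than on Margulis rigidity).
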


Thus question  5.6 of  \cite{Mc} is open  only if $d\neq  3,4,6$ {\it
and} if $n\leq 2d-1$. 

\subsection{Application      to     monodromy     of      type     $_n
F_{n-1}$} \label{sarnak}

\begin{defn}

Let $\alpha  =(\alpha _1,  \cdots,\alpha_n )\in \C  ^n$ and  $\beta =(
\beta _1,  \cdots, \beta  _n )\in \C^n$  be complex numbers  such that
$\alpha _j \neq \beta _k~({\rm mod}~~1)$ for any $j$ and $k$. \\

Denote by $z$  a  complex   variable; write  $\theta  =z\frac{d}{dz}$. 
Consider the differential operator $D=D(\alpha_1, \cdots,
\alpha _n, \beta _1, \cdots \beta _n)$ given by
\[D= (\theta +\beta _1-1)(\theta +\beta _2 -1)\cdots (\theta +\beta _n-1) -
z(\theta +\alpha _1)\cdots (\theta +\alpha _n). \]

The equation 
\[Du=0,\] is called the  {\it hypergeometric equation} with parameters
$\alpha , \beta$.
\end{defn}

The differential operator $D$ is of the form 
\[D=a_n(z)\frac{d^n}{dz^n} +a_{n-1}(z)\frac{d^{n-1}}{dz^{n-1}}+\cdots +
a_0(z),\]  where $a_i$  are polynomials  and $a_n(z)=z^n(1-z)$  is the
highest coefficient.  \\ 

This  coefficient  $a_n(z)$ vanishes  at  $0$  and  $1$ (and  also  at
infinity if we change coordinates from  $z $ to $z^{-1}$) but does not
vanish anywhere in $C=  {\mathbb P}^1\setminus \{0,1,\infty \}$. Using
this property  of the  highest coefficient, it  can be shown  that the
space of  solutions is  of dimension $n$,  and that the  solutions are
(locally)   analytic   on  the  Riemann surface  $C$.   Denote  by   $\pi
_1(C,\frac{1}{2})$, the  fundamental group of  $C$ based at  the point
$\frac{1}{2}$.  Then it acts on the space ($\simeq \C^n$) of solutions
$u$ of the foregoing differential  equation $Du=0$ ($u$ is analytic on
$C$).  Denote  by $\Gamma =\Gamma (\alpha,  \beta) $ the  image of the
fundamental group  $\pi _1(C,\frac{1}{2})$ in the  group $GL_n(\C)$ of
linear  automorphisms  of  the  $n$-dimensional  space  of  solutions.
Denote  by $M_{\alpha,  \beta}$ the  resulting representation  of $\pi
_1(C, \frac{1}{2})$.  \\

If  $\alpha, \beta \in  \Q^n$ then  $\Gamma $  may be  conjugated into
$GL_n(O)$ where $O$ is the ring of integers in a number field $F$.  If
$D=[F:\Q]$  is the  degree  of  $F$ over  $\Q$,  then $\Gamma  \subset
GL_{nD}(\Z)$.   In \cite{Sar}  the following  question  is considered:
determine the pairs $\alpha, \beta \in \Q ^n$ such that the associated
monodromy group is arithmetic (i.e.   has finite index in its integral
Zariski closure in $GL_{nD}(\Z)$).\\

Fuchs, Meiri  and Sarnak (see  \cite{Sar}) give an infinite  family of
examples of pairs $\alpha, \beta$ for  which the group $\Gamma $ has a
natural  embedding  in an  integral  orthogonal  group $O_n(\Z)$  with
Zariski dense image  and of {\it infinite index}  (In \cite{Sar}, they
are called  {\it thin} groups).  Thus  the monodromy $\Gamma  $ is not
always arithmetic. \\

By  using   \cite{A'C},  an  infinite   family  of  examples   can  be
constructed,  where  the  monodromy   is  an  arithmetic  subgroup  of
$Sp_{2m}(\Z)$.  One  can,  using  Theorem \ref{mainth},  give  a  more
general formulation: let $d\geq 2$ be  an integer and $1\leq c \leq d$
be an integer coprime to $d$.

\begin{theorem}  \label{beukers} Suppose that 
\[\alpha=(\frac{c}{d}+\frac{1}{n+1},\cdots, \frac{c}{d}+\frac{n-1}{n+1}, 
\frac{c}{d}+\frac{n}{n+1}),\] 
\[\beta =(~\frac{c}{d}+\frac{1}{n},\cdots , \frac{c}{d}+\frac{n-1}{n},~1).\] 

If $d=2$, and  $n=2m$ is even, then the monodromy  group $\Gamma $ (is
an arithmetic group  and) has finite index in  the integral symplectic
group $Sp_{2m}(\Z)$. \\

If $d\in\{3,4,6\}$ and  $n$ is arbitrary such that  $n+1$ is coprime to
$d$, then  the monodromy $\Gamma  $ is an  arithmetic group and  is of
finite index  in the integral unitary  group $U(h)(\Z)$ where  $h$ is a
suitable Hermitian form defined over the rationals. \\

If $d\geq 3$ and $n\geq 2d$ is  such that $n+1$ is coprime to $d$, then
$\Gamma $  is an arithmetic  group. $\Gamma $  has finite index  in an
integral  unitary  group  of  the  form  $U(h)(O_d)$  where  $h$  is  a
non-degenerate  Hermitian form over  the  totally  real number  field
$K_d=\Q({\rm  cos}(\frac{2\pi  }{d}))$,  and  $O_d$  is  the  ring  of
integers in $K_d$.
\end{theorem}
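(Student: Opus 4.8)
The plan is to identify the hypergeometric monodromy group $\Gamma=\Gamma(\alpha,\beta)$ with the image $\Gamma_n(d)$ of the reduced Burau representation $\rho_n(d)$ evaluated at the primitive $d$-th root of unity $q=e^{2\pi i c/d}$, and then to read off arithmeticity from the results already proved: Theorem \ref{mainth} when $d\ge 3$ and $n\ge 2d$, Theorem \ref{imagquad} when $d\in\{3,4,6\}$ (every $n$), and the theorem of A'Campo \cite{A'C} when $d=2$ (there $n$ even, which is exactly $\gcd(n+1,2)=1$). Granting a conjugacy $\Gamma\cong\Gamma_n(d)$ in $GL_n(\C)$, each of the three assertions is precisely the corresponding statement for the Burau image, the ``suitable Hermitian form'' being the transport of $h_n$ under that conjugation (defined over $\Q$ when $K_d=\Q$, i.e. $d\in\{2,3,4,6\}$, and over $K_d$ in general).

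The identification rests on Levelt's rigidity theorem (Beukers--Heckman): an irreducible pair $A,B\in GL_n(\C)$ with prescribed characteristic polynomials and with $B^{-1}A$ a pseudo-reflection is unique up to conjugacy as soon as the eigenvalues $a_i$ of $A$ and $b_j$ of $B$ satisfy $a_i\ne b_j$ for all $i,j$. First I would exhibit such a pair inside $\Gamma_n(d)$. Write $\sigma=s_1s_2\cdots s_n$ and $\sigma_0=s_1s_2\cdots s_{n-1}$, and set $A=\rho_n(d)(\sigma)$, $B=\rho_n(d)(\sigma_0)$. Since $\sigma^{n+1}=\Delta^2$ and $\rho_n(d)(\Delta^2)=q^{n+1}$ (the scalar computed just before Lemma \ref{scalar} and identified in the Remark following it), $A$ is diagonalisable with eigenvalues among the $(n+1)$-th roots of $q^{n+1}$; as $\det A=(-q)^n$ and $\prod_{j=1}^{n}q\eta^{j}=(-q)^n$ with $\eta=e^{2\pi i/(n+1)}$, the eigenvalues are forced to be $\{q\eta^{j}:1\le j\le n\}=\{e^{2\pi i\alpha_j}\}$. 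For $B$ I would restrict $\rho_n(d)$ to $\langle s_1,\dots,s_{n-1}\rangle\cong B_n$: the span $\langle e_1,\dots,e_{n-1}\rangle$ is invariant, the induced action is the reduced Burau representation of $B_n$, and the one-dimensional quotient is trivial, so the same root-of-unity and determinant count gives the eigenvalues of $B$ as $\{q\nu^{j}:1\le j\le n-1\}\cup\{1\}=\{e^{2\pi i\beta_j}\}$ with $\nu=e^{2\pi i/n}$. Finally $B^{-1}A=\rho_n(d)(s_n)$ is a single Burau generator, hence a pseudo-reflection with non-trivial eigenvalue $-q$, so $(A,B)$ has exactly hypergeometric shape.

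Next I would verify the hypotheses and that $A,B$ generate everything. Irreducibility of $\rho_n(d)$ is Proposition \ref{burauirred}[1], valid because $\gcd(n+1,d)=1$ gives $n\not\equiv-1\pmod d$. The non-resonance $a_i\ne b_j$ is the assertion $\alpha_i\not\equiv\beta_j\pmod 1$: for $j\le n-1$ it reduces to $i/(n+1)\equiv j/n\pmod 1$, i.e. $in\equiv j(n+1)\pmod{n(n+1)}$, which modulo $n$ forces $j\equiv 0$, impossible for $1\le j\le n-1$ since $\gcd(n,n+1)=1$; for $\beta_n=1$ it would force $d\mid(n+1)$, contrary to hypothesis. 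Thus Levelt's theorem gives a conjugacy $\langle A,B\rangle\cong\Gamma(\alpha,\beta)$. Because the Coxeter element satisfies $\sigma s_i\sigma^{-1}=s_{i+1}$ for $1\le i\le n-1$, the two elements $\sigma$ and $s_n=\sigma_0^{-1}\sigma$ already generate $B_{n+1}$; hence $\langle A,B\rangle=\Gamma_n(d)$ and the identification $\Gamma(\alpha,\beta)\cong\Gamma_n(d)$ is complete, whereupon the three cases follow from Theorems \ref{mainth}, \ref{imagquad} and \cite{A'C}.

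The step I expect to require the most care is not the group theory but pinning down the analytic dictionary: one must check that the monodromy of $Du=0$ around the three punctures $0,1,\infty$ carries, with the correct orientation and base-point conventions, the characteristic polynomials $\prod_j(t-e^{2\pi i\beta_j})$, a pseudo-reflection, and $\prod_j(t-e^{2\pi i\alpha_j})$ respectively, so that Levelt's normal form applies with the data computed above; this is Theorem 5.5 of \cite{Mc} specialised to the present parameters. A secondary point is to confirm that the transported Hermitian form is rational in the cases $d\in\{2,3,4,6\}$, so that the target groups $Sp_{2m}(\Z)$ and $U(h)(\Z)$ in the statement are literally the integral groups of Theorem \ref{imagquad} and of A'Campo; everything downstream of the identification is a direct citation.
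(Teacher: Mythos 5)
Your route is the same as the paper's: send $h_0\mapsto s_1s_2\cdots s_n$ and $h_\infty\mapsto (s_1\cdots s_{n-1})^{-1}$, so that $h_1\mapsto s_n^{-1}$ maps to a complex reflection under $\rho_n(d)$, check non-resonance of the two characteristic polynomials, invoke Levelt's rigidity theorem from \cite{Beu-Hec}, use that $s_1\cdots s_n$ and $s_n$ generate $B_{n+1}$ so that the rigid pair generates all of $\Gamma_n(d)$, and then quote Theorem \ref{mainth}, Theorem \ref{imagquad} and \cite{A'C}. Your non-resonance computation and the generation argument are correct and agree with the paper.

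There is, however, a genuine gap in the step that actually encodes the parameters $\alpha,\beta$: your determination of the spectrum of $A=\rho_n(d)(s_1\cdots s_n)$ (and likewise of $B$). From $A^{n+1}=\rho_n(d)(\Delta^2)=q^{n+1}\cdot\mathrm{Id}$ you correctly conclude that $A$ is diagonalisable with eigenvalues among $\{q\eta^j: 0\le j\le n\}$, $\eta=e^{2\pi i/(n+1)}$; but diagonalisability together with $\det A=(-q)^n$ does \emph{not} force the eigenvalue multiset to be $\{q\eta^j: 1\le j\le n\}$. The determinant only constrains the sum of the exponents $j$ modulo $n+1$: already for $n=3$ (so $\eta=i$) the multiset $\{q,q,q\eta^2\}$ has product $(-q)^3$ and satisfies $A^4=q^4$, so nothing in your argument excludes repeated eigenvalues and a wrong characteristic polynomial. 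Since Levelt's theorem requires the characteristic polynomials on the nose, this step must be done by an actual computation; the paper does exactly that, writing out the matrix of $\rho_n(h_0)$ in the basis $e_i$ (a companion-type matrix) and computing its characteristic polynomial $\prod_{j=1}^n\bigl(t-q\,e^{2\pi ij/(n+1)}\bigr)$, and similarly for $h_\infty$. This explicit computation also removes a second weakness of your argument: you rely on the exact value $\rho_n(\Delta^2)=q^{n+1}$, which the paper asserts only in the remark after Lemma \ref{scalar} as a special case of Proposition \ref{Gassnerproposition} that it explicitly does not prove (its own computation pins the scalar down only up to sign, and the sign $-q^{n+1}$ would change your candidate eigenvalue set). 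With the characteristic polynomials computed directly, the rest of your argument goes through and coincides with the paper's proof; a last cosmetic point is that the orientation/base-point dictionary you defer to ``Theorem 5.5 of \cite{Mc}'' is in fact supplied by Levelt's normal form itself, as stated in \cite{Beu-Hec}, not by that theorem of \cite{Mc}, which concerns homology of cyclic coverings.
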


Among these examples, only the case $d=2$ (treated in \cite{A'C}) has the
property that  the monodromy  group $\Gamma $  can be  conjugated into
$GL_n(\Z)$ (in fact it already  lies in $Sp_{2m}(\Z)$) with respect to
the natural representation given in \cite{Beu-Hec}). \\

As we  will see, Theorem \ref{beukers}  is an easy  consequence of the
fact that  the image  of the Burau  representation of the  braid group
$B_{n+1}$ on $n+1$  strands at $d$-th roots of  unity is an arithmetic
group in  the cases stated in the  theorem.  In case the  image of the
Burau  representation is  not arithmetic  (\cite{Del-Mos}, \cite{Mc}),
the image of  the monodromy representation defines a  {\it thin group}
in the sense of Sarnak (\cite{Sar}).\\

That the image of the Burau representation for $d=2$ is a finite index
subgroup of the  integral symplectic group is a  well known theorem of
A'Campo  (\cite{A'C}).  In  the other  cases, this is 
Theorem \ref{mainth}. \\

We now  sketch the relationship between Theorem  \ref{beukers} and the
Burau  representation.  Suppose   $D$  is  the  differential  operator
considered at the beginning of this section. Put $a_j=e^{2\pi i \alpha
_j}$  and write  $f(X)=\prod _{j=1}  ^n (X-a_j)$.   Consider  the ring
$\C[X]/(f(X))$.   This is  a $\C$-vector  space with  the  basis $1,X,
\cdots, X^{n-1}$. The operator defined by multiplication by $X$ on the
ring $\C[X]/(f(X))$ gives a matrix $A$ with respect to this basis, and
is  called the companion  matrix of  $f$.  Similarly,  let $B$  be the
companion matrix of  $g(X)=\prod _{j=1}^n (X-b_j)$, where $b_j=e^{2\pi
i \beta  _j}$. We  will assume henceforth  that $a_j\neq b_k$  for any
$j,k$; i.e. $f,g$ are coprime. \\

By  results  of Levelt  (\cite  {Beu-Hec})  there  exists a  basis  of
solutions $\{u\}=\{u_1, \cdots, u_n\}$ of the equation $Du=0$ on which
the monodromy action of $\pi _1(C)=\pi _1(C, \frac12)$ is described as
follows.   Let  $h_0,h_1,h_{\infty}$  be  small  loops  in  $C$  going
counterclockwise around $0,1,\infty$  exactly once. They generate $\pi
_1(C, \frac12)$ and  satisfy the relation $h_0h_1h_{\infty}=1$.  Under
the monodromy representation $M_{\alpha,  \beta}$, the matrix of $h_0$
is $A$, that of $h_{\infty}$ is  $B^{-1}$ (then the matrix of $h_1$ is
$A^{-1}B$). It  follows that $A^{-1}B$  is a {\it  complex reflection}
i.e.  the  space of  vectors fixed by  $A^{-1}B$ is a  codimension one
subspace. \\

Moreover,  suppose   $X,Y  \in  GL_n(\C)$   are  such  that   (1)  the
characteristic  polynomial  of  $X$  is $f$,  and  the  characteristic
polynomial  of $Y$  is  $g$, (2)  the  matrix $X^{-1}Y$  is a  complex
reflection.  We then  get  a  representation $M'$  of  $\pi _1(C)$  by
sending  $h_0$ to  $X$ and  $h_{\infty}$ to  $Y^{-1}$.  The  result of
Levelt is  that the representation  $M'$ is equivalent  to $M_{\alpha,
\beta}$ for some $\alpha, \beta$  such that $e^{2\pi i \alpha _j}$ for
varying $ j$  give all the roots of $f$ ($\beta  $ is chosen similarly
for $g$). \\

Now  consider  the   Burau  representation  $\rho  _n:  B_{n+1}\ra
GL_n(\Z[q,q^{-1}])$. The braid group $B_{n+1}$ is generated by the two
elements    $t_0=s_1s_2\cdots   s_n$   and    $t_1=s_n^{-1}$.    Write
$t_{\infty}=(s_1s_2\cdots     s_{n-1})^{-1}$.      We     then     get
$t_0t_1t_{\infty}=1$.  Therefore, we have a surjection from $\pi _1(C,
\frac12)\ra  B_{n+1}$  given  by  $h_0\mapsto t_0$  and  $h_{\infty  }
\mapsto  t_{\infty}$.    Composition  of  this  map   with  the  Burau
representation $\rho _n$ gives  a representation $r: \pi _1(C, \frac12)
\ra GL_n(\Z[q,q^{-1}])$.  Put $X=r (h_0)$  and $Y=r(h_{\infty}^{-1})$.
Then $X^{-1}Y=r(h_1)=\rho _n(t_1) =\rho _n(s_n^{-1})$, and the formula
for the Burau representation shows that the latter matrix is a complex
reflection.  Secondly, with respect to the standard basis $e_i$ of the
Burau representation, the element $h_0$ has the matrix form
\[X= \rho _n(q)(h_0)= 
\begin{pmatrix} 0 & 0 & 0 & \cdots & -q  \\ q & 0 & 0 & \cdots & -q \\
0 & q & 0 & \cdots & -q \\ \cdots & \cdots & \cdots & \cdots & -q \\ 0
& \cdots  & \cdots &  q & -q  \end{pmatrix} \] and  its characteristic
polynomial is of the form
\[Ch(t,X)= \prod  _{j=1}^n (t-q~e^{2\pi i j/(n+1)})=f(t),\] say. Similarly, if 
$Y=  \rho _n(q)(h_0^{-1})$  then  the  characteristic
polynomial is of the form
\[Ch (t,Y)=(t-1) \prod  _{k=1}^{n-1} (t-q~e^{2\pi i k/n})=g(t),\] say.
It  is clear that  the two  characteristic polynomials  do not  have a
common root. Therefore,  by Levelt's result, $r$ is  equivalent to the
monodromy   representation   $M_{\alpha,   \beta}$   of   a   suitable
hypergeometric  equation associated to  parameters $\alpha  _j$, where
$a_j=qe^{2\pi j/(n+1)}=e ^{2\pi \alpha  _j}$, and $b_j=e^{2\pi i \beta
_j}=qe^{2\pi  ij/n}~{\rm or}  ~1$.   Specialise $q$  to any  primitive
$d$-th root of unity  $e^{2\pi c/d}$. The resulting representation $r$
of  the  group  $\pi  (C,\frac12)$  is  equivalent  to  the  monodromy
representation associated  to the parameters  $\alpha , \beta$  in the
Theorem, and therefore, it has the same image (up to conjugacy) as the
Burau representation  $\rho _n (d)$.  Therefore,  the arithmeticity of
$M_{\alpha,\beta}$ follows from Theorem \ref{mainth}. \\

In Theorem \ref{beukers},  we have proved that a  very special case of
the representation $M_{\alpha, \beta}$ coincides with the monodromy of
the  Burau representation; therefore,  its arithmeticity  follows from
Theorem \ref{mainth} and from \cite{A'C}.

\section{Theorem \ref{cyclicmonodromy} and its proof} \label{cyclicmonodromysection}

Let $d\geq 2$  and $k_1,k_2, \cdots, k_{n+1}$ be  integers with $1\leq
k_i \leq d-1$. Let $a_1, \cdots, a_{n+1}$ be distinct complex numbers.
Consider  the  affine curve  $X_{a,k}=\{(x,y)\in  \C ^2\}$  given by  the
equation
\[y^d=(x-a_1)^{k_1}\cdots  (x-a_{n+1})^{k_{n+1}}.\]   $X_{a,k}$  is  a
compact Riemann surface $X_{a,k}^*$ with finitely many punctures.  The
space  $S$  of $a=(a_1,  \cdots,  a_{n+1})\in  C^{n+1}$ with  distinct
co-ordinates  has fundamental  group  isomorphic to  the ``pure  braid
group'' denoted  $P_{n+1}$. It  is the kernel  to the  map $B_{n+1}\ra
S_{n+1}$ (see the last paragraph of subsection \ref{artinaction}).  As
before, we have a  family $X \ra S$ with the fibre  over $a$ being the
compact  Riemann  surface  $X_{a,k}^*$   and  we  have  the  monodromy
representation    of    $P_{n+1}$     on    the    cohomology    group
$H^1(X_{a,k}^*,\Z)$.

\begin{theorem} \label{cyclicmonodromy} If  all the $k_i$ are co-prime
to $d$  and if  $n\geq 2d$, then  the image  $\Gamma $ of  the monodromy
representation is an arithmetic group.
\end{theorem}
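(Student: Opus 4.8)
The plan is to run the argument of Theorem \ref{mainth} and Theorem \ref{openmonodromy}, but with the Burau representation replaced by the reduced Gassner representation of the pure braid group $P_{n+1}$, evaluated at the tuple of roots of unity dictated by the exponents $k_i$. I would first carry out the topological reduction exactly as in subsections \ref{general}--\ref{buraurealise}: the affine curve $X_{a,k}$ is the cyclic cover of $S_a=\C\setminus\{a_1,\ldots,a_{n+1}\}$ attached to the homomorphism $\phi:F_{n+1}=\pi_1(S_a)\ra \Z/d\Z$ sending the loop $x_i$ to $k_i$, and since each $\gcd(k_i,d)=1$ every generator maps to a unit and $\pi_1(X_{a,k})$ is identified with $\ker\phi$. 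The abelianisation of this kernel, regarded as a module over $\Z[\Z/d\Z]=\Z[q]/(q^d-1)$, realises $H_1$ of the fibre, and the monodromy of $P_{n+1}$ on it is (an extension by a trivial summand of) the reduced Gassner representation with the $i$-th variable evaluated at $\zeta^{k_i}$, where $\zeta$ is a $d$-th root of unity. Decomposing over the deck group $G=\Z/d\Z$, the cohomology splits into eigenspaces $V_\chi$; for a character $\chi$ of order $e\mid d$ the monodromy on $V_\chi$ is Gassner evaluated at a primitive $e$-th root of unity in each variable (here the coprimality of the $k_i$ is used, as it guarantees $\zeta^{k_i}$ stays primitive), and the sum over characters of a fixed order $e$ is defined over the totally real field $K_e$, exactly as in subsection \ref{proofarithmetic}.

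Next I would establish the Gassner analogues of the structural lemmas of Section \ref{burausection}. The Gassner representation carries a multivariable Hermitian form, the several-variable version of Squier's form $h_n$, whose evaluation at $(\zeta^{k_i})$ is non-degenerate precisely when the monodromy at infinity $\zeta^{-\sum k_i}$ is non-trivial, i.e. when $e\nmid\sum k_i$ on the order-$e$ eigenspace; when $e\mid\sum k_i$ there is a one-dimensional radical spanned by an explicit invariant vector whose quotient carries a non-degenerate form, the exact analogue of Lemma \ref{isotropic} and Proposition \ref{burauirred}. The image therefore lands in a product of unitary groups $U(h)(O_{K_e})$, and because $n\geq 2d$ each relevant Hermitian space contains two mutually orthogonal isotropic planes, forcing $K$-rank at least two by the same count as in the proof of Theorem \ref{mainth}.

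The heart of the proof, and the step I expect to be the main obstacle, is the construction of sufficiently many unipotent elements in the image. Here I would mimic Propositions \ref{unipotent}--\ref{verifycriterion}: using the central full-twist element of the pure braid group, whose scalar action on an irreducible Gassner evaluation is $\prod_i t_i=\zeta^{\sum k_i}$ (the content of the asserted Proposition \ref{Gassnerproposition}), together with a suitable pure-braid commutator, I would produce a non-trivial unipotent lying in the Heisenberg unipotent radical of a maximal parabolic, and then generate an arithmetic subgroup of that radical by taking conjugates. The difficulty is that the pure braid group has a more complicated generating set than the $s_i$ and the several-variable form is more delicate than the single-variable Burau form, so verifying both that this central element acts by a scalar $\neq 1$ (which needs $e\nmid\sum k_i$, guaranteed on the faithful eigenspaces by coprimality) and that the commutator escapes the Levi is genuinely more work than in the Burau case; this is precisely why the general Gassner theory is deferred elsewhere, and it is the coprimality hypothesis that keeps every evaluation at a primitive root and hence keeps these forms non-degenerate and of high rank.

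Finally, with finite-index subgroups of the unipotent radicals of two opposite parabolics in hand on each eigenspace, Corollary \ref{criterion} (equivalently Corollary \ref{unitarycorollary}) gives arithmeticity factor by factor, Lemma \ref{inductivelemma} handles the passage from $n-1$ to $n$ where an inductive step is needed, and Lemma \ref{products} assembles the factors into arithmeticity of the image inside the full product $\prod_{e\mid d}U(h)(O_{K_e})$. To descend from $X_{a,k}$ to its compactification $X_{a,k}^*$ I would repeat the argument of subsection \ref{prooffullbraid}: the loops around the punctures span the kernel of $H_1(X_{a,k})\ra H_1(X_{a,k}^*)$ and are stable under $P_{n+1}$, so by Proposition \ref{arithmeticimageproposition} the image on $H_1(X_{a,k}^*)$ remains arithmetic, and Poincar\'e duality transfers this to $H^1(X_{a,k}^*,\Z)$, completing the proof.
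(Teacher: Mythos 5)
Your overall architecture does match the paper's: reduce the monodromy on $H_1(X_{a,k}^*)$ to the reduced Gassner representation of $P_{n+1}$ evaluated at the roots of unity $\zeta^{k_i}$, establish the Gassner analogues of the Hermitian form and the irreducibility/degeneracy statements (these are exactly Proposition \ref{Gassnerproposition}), construct unipotents, and finish with Theorem \ref{bamise}, Lemma \ref{products} and the descent to the compactification. But there is a genuine gap at the combinatorial heart of the argument, and both claims you make to bridge it are false. First, you assert that for $n\geq 2d$ the isotropic vectors appear ``by the same count as in the proof of Theorem \ref{mainth}''. That count works in the Burau case only because every variable is evaluated at the \emph{same} root $q$, so a block of $d$ consecutive strands automatically has product $q^d=1$ and the restricted form on the corresponding span is degenerate, yielding a null vector. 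In the Gassner case the evaluations $z_i=\zeta^{k_i}$ vary with $i$, and a block of $d$ consecutive indices need not have product $1$ (take $d=3$ and $k=(1,1,2,\dots)$). Second, you claim that coprimality of the $k_i$ to $d$ guarantees $e\nmid \sum k_i$; this is simply wrong: all $k_i=1$ with $d\mid n+1$ is a counterexample (and is precisely the reducible Burau situation the paper exploits). Worse, the claim is inverted in spirit. The unipotent construction \emph{needs} subsets of strands whose product of evaluations equals $1$ -- degenerate sub-forms are what produce the invariant vector, the flag, the Heisenberg radical, and the isotropic vectors. What coprimality actually buys is that each individual $z_i\neq 1$, so that deleting one strand $j$ from a null subset leaves a sub-full-twist acting by the nontrivial scalar $z_j^{-1}$, which is what makes the commutator produce a nonzero unipotent; it also keeps each $z_i$ a primitive $e$-th root on each order-$e$ eigenspace.

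The missing idea is the paper's pigeonhole argument, which it flags as the new ingredient in the Gassner case. Since $n\geq 2d$, consider the partial sums $k_1,\ k_1+k_2,\ \dots,\ k_1+\cdots+k_d$ modulo $d$: either one of them vanishes or two coincide, so some set $X\subset\{1,\dots,d\}$ of consecutive indices satisfies $\prod_{i\in X}z_i=1$; running the same argument inside $\{d+1,\dots,2d\}$ gives a second null subset $Y$ disjoint from $X$. The null vectors of the degenerate restrictions of the form to the spans indexed by $X$ and by $Y$ are then two mutually orthogonal isotropic vectors, which is what forces $K$-rank at least two; and Proposition \ref{Gassnerproposition} applied to $X$ in place of the full index set supplies the degenerate configuration inside which the full-twist commutator yields the unipotent elements. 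With this replacement for your two incorrect claims, the remaining steps you list (Corollary \ref{criterion}, Lemma \ref{inductivelemma}, Lemma \ref{products}, and the passage to $H^1(X_{a,k}^*,\Z)$ via Proposition \ref{arithmeticimageproposition} and Poincar\'e duality) do assemble into the paper's proof.
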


We only sketch the proof (the proof is much more involved than that of
Theorem    \ref{fullbraidmonodromy}).     The    proof   of    Theorem
\ref{fullbraidmonodromy}   used   the    properties   of   the   Burau
representation, which were  established in Section \ref{burausection}.
The proof of Theorem  \ref{cyclicmonodromy} is quite similar, but uses
properties   of  the   reduced  {\it   Gassner}   representation  (see
\cite{Bir},  p.119   for  the   definition  of  the   reduced  Gassner
representation).  This is a representation
\[g_n(X):  P_{n+1}  \ra   GL_n(\Z[X_1^{\pm  1},  \cdots,  X_{n+1}^{\pm
1}]),\] of the  pure braid group $P_{n+1}$ on the  free module of rank
$n$ over the ring of Laurent polynomials with integral coefficients in
$n+1$ variables $X_1, \cdots,  X_{n+1}$. If $z_1, \cdots, z_{n+1}$ are
complex numbers, then  we get a specalisation $g_n(z)$  of the reduced
Gassner representation, called the {\it reduced Gassner representation
evaluated at} $z_1, \cdots, z_{n+1}$. \\

The properties  of Gassner  representation which we  will use  are the
following (for the second part of the Proposition, see \cite{Abd}).

\begin{proposition}  \label{Gassnerproposition}  The  reduced  Gassner
representation  is  has  a  non-degenerate  skew  Hermitian  form  $H$
preserved by $P_{n+1}$.  It  is absolutely irreducible.  The centre of
$B_{n+1}$ (which  lies in $P_{n+1}$  and is generated by  $\Delta ^2$)
acts  by scalars,  and $\Delta  ^2$ acts  by the  scalar $X_1X_2\cdots
X_{n+1}$ on the Gassner representation.\\

If we  specialise $X_i\mapsto z_i=q^{k_i}$ where $k_i$  are coprime to
$d$ and $q$ is a generator of the cyclic group $\Z/d\Z=q^{\Z}/q^{d\Z}$
, then  the reduced Gassner  representation evaluated at  these $d$-th
roots of  unity is irreducible unless  $z_1z_2\cdots z_{n+1}=1$. \\

If $z_1\cdots z_{n+1}=1$, then reduction  of the Hermitian form $H$ is
degenerate  and  its null  space  is  one  dimensional. Moreover,  the
quotient is irreducible.
\end{proposition}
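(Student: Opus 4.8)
The plan is to establish the four assertions in turn, in each case following the template developed for the Burau representation in Section~\ref{burausection}; the single-variable Burau statements are then recovered by specialising $X_1=\cdots=X_{n+1}=q$. Write $\Lambda=\Z[X_1^{\pm1},\cdots,X_{n+1}^{\pm1}]$ for the ground ring. First I would construct the skew-Hermitian form $H$ explicitly on the standard free module $\Lambda^n$, by writing down its Gram matrix in the standard basis $e_1,\cdots,e_n$ --- a multivariable analogue of the tridiagonal matrix $h_n$ of Section~\ref{burausection}, now taken with respect to the involution $X_i\mapsto X_i^{-1}$ --- and then verifying preservation under each pure-braid generator by a direct computation on the $e_j$. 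Non-degeneracy over the fraction field, together with the precise degeneracy locus after specialisation, would follow from a determinant computation entirely parallel to Lemma~\ref{squier}: expanding $\det H$ along the first row yields a linear recursion whose solution is, up to a unit of $\Lambda$, a multiple of $X_1X_2\cdots X_{n+1}-1$. Specialising $X_i\mapsto z_i=q^{k_i}$ then shows that $\det H$ vanishes exactly when $z_1z_2\cdots z_{n+1}=1$, that is, when $d\mid k_1+\cdots+k_{n+1}$.

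The absolute irreducibility of the generic representation I would simply quote from \cite{Abd}. Granting it, the assertion that $\Delta^2$ acts by a scalar is immediate from Schur's Lemma. I would identify that scalar not by a determinant argument (which, as in Lemma~\ref{scalar}, pins it down only up to a root of unity) but through the action on the free group of subsection~\ref{artinaction}: the full twist $\Delta^2$ acts on $F_{n+1}$ as conjugation by the boundary word $w=x_1x_2\cdots x_{n+1}$, which is invariant under $B_{n+1}$ by the formulae of that subsection. On the Alexander module --- the abelianisation of the commutator subgroup $[F_{n+1},F_{n+1}]$, regarded as a $\Lambda$-module --- conjugation by $w$ descends to multiplication by the image of $w$ in $\Z^{n+1}$, namely $X_1X_2\cdots X_{n+1}$. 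Specialising $X_i=q$ this reproduces the value $q^{n+1}$ promised in the Remark following Lemma~\ref{scalar}.

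For the specialised statements I would argue as in Lemma~\ref{fullgroup} and Proposition~\ref{burauirred}. When $z_1\cdots z_{n+1}\neq1$ the specialised form is non-degenerate, and then any nonzero $P_{n+1}$-invariant subspace $W$ may be shown to contain a nonzero multiple of each $e_k$: each pure-braid generator acts (after specialisation) as a transvection $x\mapsto x-c\,H(x,e_k)e_k$ in the direction of some $e_k$, with $c$ a unit, exactly as $s_k$ does in the Burau case, so invariance of $W$ together with non-degeneracy of $H$ forces $W$ to be everything. When $z_1\cdots z_{n+1}=1$ the form acquires a one-dimensional radical; its spanning vector is orthogonal to every $e_k$ and hence fixed by every generator, just as in part~[3] of Lemma~\ref{isotropic}, which exhibits the reducibility, while the induced form on the quotient is non-degenerate and the same transvection argument proves the quotient irreducible.

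The step I expect to be genuinely delicate is this last one. A naive semicontinuity argument (absolute irreducibility is an open condition, so it persists generically) is insufficient, since it would only yield irreducibility away from some unknown proper subvariety, not away from the single hypersurface $z_1\cdots z_{n+1}=1$; one therefore really needs the form to detect the obstruction, and hence the direct Lemma~\ref{fullgroup}-type argument. Unlike the Burau situation, however, the generators of $P_{n+1}$ are not the $s_k$ themselves, so one must verify that the standard pure-braid generators $A_{ij}$ (equivalently the $s_i^2$ and their conjugates) act as honest transvections whose directions span $\Lambda^n$ and whose coefficients $c$ remain units after evaluation at a primitive $d$-th root of unity, ruling out accidental degeneracies among the $z_i=q^{k_i}$. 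Controlling these transvection formulae under specialisation is where the real work lies.
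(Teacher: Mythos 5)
First, a point of comparison: the paper does not actually prove Proposition \ref{Gassnerproposition}. It cites \cite{Abd} for the second part (irreducibility of the specialisations), and the Remark following Lemma \ref{scalar} states explicitly that the scalar assertion is a special case of this proposition ``which we do not prove''. So your attempt cannot be measured against a proof in the paper; it has to stand on its own. Its skeleton is the natural one --- the multivariable analogue of Section \ref{burausection} --- and one piece of it is complete and correct, namely the identification of the scalar: $\Delta^2$ acts on $F_{n+1}$ as conjugation by the boundary word $x_1x_2\cdots x_{n+1}$, and by the mechanism of subsection \ref{general} conjugation descends, on the module realising the Gassner representation, to multiplication by the image of that word in $\Z^{n+1}$, i.e. by $X_1X_2\cdots X_{n+1}$. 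This is precisely the argument the paper gestures at in the Remark after Lemma \ref{scalar}, and it specialises to $q^{n+1}$ in the Burau case.

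The remaining assertions, however, are left with two genuine gaps, and they are exactly where the content of the proposition lies. (i) The determinant. Knowing that $\det H$ is, up to a unit, a multiple of $X_1\cdots X_{n+1}-1$ gives only one implication: degeneracy when $z_1\cdots z_{n+1}=1$. The word ``exactly'' requires the complementary factor of $\det H$ to be non-zero at the points $z_i=q^{k_i}$, and this is precisely where the hypothesis $\gcd(k_i,d)=1$ must enter --- a hypothesis your argument never invokes. The Burau case shows the danger is real: there the complementary factor is $((q+1)/q)^n$, which vanishes identically at $q=-1$ (that is, $d=2$) even when $q^{n+1}\neq 1$, so an uncontrolled cofactor genuinely can destroy the dichotomy. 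The same missing computation also carries the one-dimensionality of the radical and the non-degeneracy of the quotient form in the third assertion, both of which you state rather than derive; in the Burau case these were explicit calculations (Lemma \ref{isotropic}), and nothing analogous is exhibited here. (ii) The transvection step. You flag it yourself as ``where the real work lies'', and that is the problem: proving that the Gassner matrices of the generators $A_{ij}$ are pseudo-reflections $x\mapsto x-c\,H(x,v_{ij})v_{ij}$, that their directions span after evaluation at $q^{k_i}$, that the scalars $c$ remain non-zero there, and that one can propagate from one direction to all of them as in Lemma \ref{fullgroup} --- this \emph{is} the proof of the last two assertions, not a refinement of it. As written, the proposal is a sound plan whose two core computations are deferred; to close it you would either have to carry them out explicitly, or do what the paper does and quote \cite{Abd} for the specialised irreducibility rather than only for the generic statement.
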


In  the  Burau  case, we  used  the  fact  that if  $n\equiv  -1~({\rm
mod}~d)$,   then  the  Hermitian   form  is   degenerate  (Proposition
\ref{burauirred}) to  produce (many) unipotent  elements. We similarly
use the  last part  of Proposition \ref{Gassnerproposition}  to obtain
unipotent elements in the Gassner case. \\

The  analogue of  Theorem \ref{mainth}  is  the following.  As in  the
introduction, let $A_d=\Z[q,q^{-1}]/(\Phi  _d(q))$; it is isomorphic to
the  ring of  integers in  the  $d$-th cyclotomic  extension $E_d$  of
$\Q$.  Let $O_d$  denote  the ring  of  integers of  the totally  real
sub-field $K_d=\Q[q+q^{-1}]/(\Phi _d(q))$ of $E_d$.

\begin{theorem}     \label{gassnermainth}    Let    $g_n(d):P_{n+1}\ra
GL_n(A_d)$  denote  the reduced  Gassner  representation evaluated  at
$X_i=q_0 ^{k_i}$ where  $k_i$ are coprime to $d$,  and $q_0\in A_d$ is
the image of $q$. \\

If $n\geq 2d$, then the image of $g_n(d)$ is an arithmetic subgroup of
a suitable unitary group.
\end{theorem}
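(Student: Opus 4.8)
The plan is to rerun the entire argument of Theorem \ref{mainth}, replacing the Burau data by the Gassner data supplied by Proposition \ref{Gassnerproposition}. Concretely, Proposition \ref{burauirred} is replaced by the irreducibility statements in Proposition \ref{Gassnerproposition}, and Lemma \ref{scalar} is replaced by the assertion that $\Delta^2$ acts on the Gassner representation by the scalar $z_1z_2\cdots z_{n+1}=q^{k_1+\cdots+k_{n+1}}$. The skew-Hermitian form $H$ of Proposition \ref{Gassnerproposition} is turned into an ordinary Hermitian form by multiplying by a fixed purely imaginary element $\theta\in E_d$ (with $\overline{\theta}=-\theta$), so that its unitary group $U(H)$ is a unitary group in the sense of Section \ref{unitary}, defined over $K_d$, with $g_n(d)(P_{n+1})\subset U(H)(O_d)$. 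When $d\nmid\sum_i k_i$ the form is non-degenerate and $g_n(d)$ is irreducible; when $d\mid\sum_i k_i$ the form has a one-dimensional radical spanned by a $P_{n+1}$-invariant vector $v$, and the quotient carries an irreducible non-degenerate representation. This is exactly the dichotomy used in the Burau case, with $\sum_i k_i$ modulo $d$ now playing the role of $n+1$ modulo $d$.

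First I would record that $SU(H)$, being a form of $SL_n$ with $n\geq 2$, is absolutely almost simple over $K_d$, and then compute its $K_d$-rank. As in the proof of Theorem \ref{mainth}, the rank equals the number of mutually orthogonal isotropic lines one can exhibit, and isotropic vectors arise from degenerate consecutive sub-blocks $e_a,\dots,e_b$ of the standard basis. By the Gassner analogue of Lemma \ref{squier}, the restriction of $H$ to such a block degenerates precisely when the product $z_a z_{a+1}\cdots z_{b+1}=q^{k_a+\cdots+k_{b+1}}$ equals $1$, i.e.\ when $k_a+\cdots+k_{b+1}\equiv 0\pmod d$. Considering the $n+2$ partial sums $S_0=0,S_1,\dots,S_{n+1}$ of the $k_i$ modulo $d$, the hypothesis $n\geq 2d$ gives (by pigeonhole applied separately to $S_0,\dots,S_d$ and to $S_{d+1},\dots,S_{2d+1}$) two coincidences $S_{i}=S_{j}$ lying in disjoint index ranges, hence two degenerate blocks separated by a gap of basis indices; since $H(e_i,e_j)=0$ for $|i-j|\geq 2$, the two resulting isotropic vectors are orthogonal, so the $K_d$-rank of $SU(H)$ is at least two.

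Next I would construct an arithmetic subgroup of the unipotent radical of a parabolic $K_d$-subgroup inside the image, imitating Subsection \ref{nequals2dcase} and the proof of Theorem \ref{mainth}. The twist $(\Delta')^2$ on strands $2,\dots,n+1$ is a pure braid, and by Proposition \ref{Gassnerproposition} its image acts on the span $X$ of $e_2,\dots,e_n$ by the scalar $z_2\cdots z_{n+1}=q^{k_2+\cdots+k_{n+1}}$; in the degenerate case $\sum_i k_i\equiv 0$ this equals $q^{-k_1}\neq 1$ because $k_1$ is coprime to $d$, which is precisely the non-triviality needed in Lemmas \ref{degcommutator}--\ref{nondegcommutator}. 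Since $s_1\notin P_{n+1}$, I would replace $u=[s_1,(\Delta')^2]$ by $u=[A,(\Delta')^2]$, where $A=s_1^2\in P_{n+1}$ is the standard pure-braid generator linking strands $1$ and $2$; one checks, as in Proposition \ref{unipotent}, that the image of $A$ carries $e_2$ out of $X$, so that $u$ maps to a non-central element of the Heisenberg group. Conjugating $u$ by the pure-braid subgroup on strands $2,\dots,n+1$ and invoking Lemmas \ref{fullgroup} and \ref{finiteunipotent} then yields a finite-index subgroup of the integral unipotent radical, as in Propositions \ref{unitaryunipotent} and \ref{verifycriterion}. Combined with the rank bound, Corollary \ref{unitarycorollary} and Lemma \ref{inductivelemma} produce a finite-index subgroup of $U(H)(O_d)$; an induction on $n\geq 2d$, ascending one strand at a time and treating the non-degenerate and degenerate partial-sum cases separately, then covers all $n\geq 2d$.

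The main obstacle is that, unlike in the Burau case where the single integer $n$ controls both the degeneracy of the form and the central scalar, here everything is governed by the partial sums $k_a+\cdots+k_b$ modulo $d$, which depend on the whole tuple $(k_i)$. This complicates the induction: restricting $g_{n+1}(d)$ to a sub-pure-braid group yields a lower Gassner representation attached to a \emph{sub-collection} of the $k_i$, whose degeneracy must be tracked through the partial-sum congruences, and one must verify in each residue case that the two codimension-one non-degenerate subspaces required by Lemma \ref{inductivelemma} exist and meet in an isotropic vector. The secondary difficulty is that every group element used — the scalar twist and the substitute for $s_1$ — must be a pure braid, so the commutator computations of Section \ref{burausection} must be redone inside $P_{n+1}$; fortunately the hardest input, the absolute irreducibility of the Gassner representation and the value of the central scalar, is already provided by Proposition \ref{Gassnerproposition} (and \cite{Abd}).
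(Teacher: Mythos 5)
Your proposal assembles exactly the same key ingredients as the paper's (admittedly sketchy) proof in Section \ref{cyclicmonodromysection}: Proposition \ref{Gassnerproposition} as the source of irreducibility, of the central scalar, and of the degeneracy criterion; the pigeonhole principle applied to the partial sums $k_1+\cdots+k_i \pmod d$ to produce two disjoint consecutive degenerate blocks, hence two orthogonal isotropic vectors and $K_d$-rank at least two; commutator-built unipotents; and the criterion of Theorem \ref{bamise}/Corollary \ref{criterion}. Your extra details (turning the skew-Hermitian $H$ into a Hermitian form, replacing $s_1$ by the pure braid $s_1^2$, and noting that the twist's scalar is $q^{-k_1}\neq 1$ because $\gcd(k_1,d)=1$) are correct and fill in points the paper leaves implicit. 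Where you genuinely diverge is the superstructure: you propose to rerun the Burau induction on $n$ via Lemma \ref{inductivelemma}, whereas the paper drops the induction entirely --- it applies Proposition \ref{Gassnerproposition} to the pigeonhole block $X$ itself (the sub-pure-braid group on the strands of $X$ sees a degenerate form with one-dimensional radical sitting inside the ambient space), gets nontrivial unipotents from the commutator construction there, and then concludes directly from Theorem \ref{bamise}, uniformly in $n\geq 2d$ and in the tuple $(k_i)$. This difference is not cosmetic, because your inductive step is where the real danger lies, and it is worse than your caveat suggests: the two codimension-one subspaces required by Lemma \ref{inductivelemma} can be \emph{simultaneously} degenerate (already for $d=3$, $n=6$, all $k_i=1$, both the span of $e_1,\dots,e_5$ and the span of $e_2,\dots,e_6$ are degenerate --- this is precisely why the Burau proof excludes the residues $n\equiv 0,-1\pmod d$ from the Lemma-\ref{inductivelemma} step and treats them directly), and in the Gassner setting the bad configurations are governed by the whole tuple $(k_i)$ rather than by $n$ alone, so there is no clean split into base and inductive cases. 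The repair is already contained in your own second paragraph: since any window of $d$ consecutive strands contains a degenerate block by your pigeonhole argument, you can run your commutator construction on such a block, conjugate by the pure braid group to fill out finite-index subgroups of two opposite integral unipotent radicals, and invoke Corollary \ref{criterion} together with your rank bound --- discarding the induction altogether, which is exactly what the paper does.
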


The proof of Theorem \ref{gassnermainth} is similar to that of Theorem
\ref{mainth}.   In the  case  of the  Burau  representation, after  we
constructed sufficiently many unipotent elements, Theorem \ref{mainth}
could  be proved  using the  fact  (see the  section on  the proof  of
Theorem  \ref{mainth}) that  the  $K$-rank of  the associated  unitary
group was $\geq 2$, provided $n\geq 2d$.  The argument was as follows.
Write $n+1= d+m+d$,  where $m\geq 1$.  Then the span  of the first $d$
basis elements  $e_1, \cdots, e_d$ of the Burau representation  
contains an isotropic vector  $v$ ,
and similarly  the span of the  last $d$ basis  elements $e_n, \cdots,
e_{n-d+1}$ contains an  isotropic vector $v'$. If $m\geq  1$, then the
two  sets  of basis  elements  are  orthogonal  and hence  $v,v'$  are
orthogonal for the hermitian form $h_n$  and hence the $K$ rank of the
unitary group is at least 2. \\

We argue similarly  in the case of the  Gassner representation. As was
remarked at  the end  of Proposition \ref{Gassnerproposition},  we can
get unipotent  elements if there are  subsets $X$ of  the indexing set
$1,2, \cdots,n$ such that $\prod _{i\in X} z_i=1$.  Put $z_i=q^{k_i}$;
an  argument using the  pigeon-hole principle  implies that  if $n\geq
2d$,  then   there  are  two   disjoint  subsets  $X,Y$  of   the  set
$\{1,2,\cdots,  n\}$  such that  $\prod
_{i\in  X} z_i=1$  and  $\prod  _{j\in Y}z_j=1$  (In  the Burau  case,
$X=\{1,2,\cdots, d\}$ and  $Y=\{n,n-1, \cdots, n-d+1\}$ will suffice).
By the  third part of Proposition  \ref{Gassnerproposition} it follows
that  the span of  $e_i$ for  $i\in X$  contains an  isotropic vector
$v_X$.   Similarly,  the span  of  $e_j$  for  $j\in Y$  contains  an
isotropic vector $v_Y$.  The Hermitian  form preserved by the image of
the  Gassner representation  is such  that if  $X,Y$ are  disjoint and
their  union  is  a  proper  subset  of  $\{1,2,\cdots,  n\}$,  then
$v_X,v_Y$  are orthogonal, and  hence the  $K$-rank of  the associated
unitary    group   is   at    least   two.     Moreover,   Proposition
\ref{Gassnerproposition} applied to  the set $X$ (in place  of the set
$1,2,\cdots, n$) implies  that we have many unipotent  elements in the
image  of  the  Gassner   representation.   By  appealing  to  Theorem
\ref{bamise}, we then deduce Theorem \ref{gassnermainth}. \\

The  proof of  Theorem \ref{cyclicmonodromy}  is deduced  from Theorem
\ref{gassnermainth}, by relating  the monodromy representation, to the
Gassner representation. The proof of this relationship is very similar
to the proof in Section \ref{monodromyandburau} relating monodromy and
the Burau  representation (but is much  more involved and  we omit the
details).

\end{document}